\newtheorem{theorem}{Theorem}
\newtheorem{lemma}[theorem]{Lemma}
\theoremstyle{definition}
\newtheorem{remark}[theorem]{Remark}
\newtheorem{example}[theorem]{Example}
\newtheorem{problem}[theorem]{Problem}
\newtheorem{definition}[theorem]{Definition}
\numberwithin{equation}{section}
\numberwithin{theorem}{section}
\author{Bent Fuglede}
\address{Department of Mathematical Sciences, University of Copenhagen,
2100 Copenhagen, Denmark}
\email{fuglede@math.ku.dk}
\author{Natalia Zorii}
\address{Institute of Mathematics,
National Academy of Sciences of Ukraine, Tereshchenkivska 3, 01601,
Kyiv-4, Ukraine}
\email{natalia.zorii@gmail.com}
\begin{document}

\title[An alternative concept of Riesz energy of measures]{An alternative concept of Riesz energy of measures with application to generalized condensers}

\begin{abstract}
In view of a recent example of a positive Radon measure $\mu$ on a domain $D\subset\mathbb R^n$, $n\geqslant3$, such that $\mu$ is of finite energy $E_g(\mu)$ relative to the $\alpha$-Green kernel $g$ on $D$, though the energy of $\mu-\mu^{D^c}$ relative to the $\alpha$-Riesz kernel $|x-y|^{\alpha-n}$, $0<\alpha\leqslant2$, is not well defined (here $\mu^{D^c}$ is the $\alpha$-Riesz swept measure of $\mu$ onto $D^c=\mathbb R^n\setminus D$), we propose a weaker concept of $\alpha$-Riesz energy for which this defect has been removed. This concept is applied to the study of a minimum weak $\alpha$-Riesz energy problem over (signed) Radon measures on $\mathbb R^n$ associated with a (generalized) condenser ${\mathbf A}=(A_1,D^c)$, where $A_1$ is a relatively closed subset of $D$. A solution to this problem exists if and only if the $g$-cap\-acity of $A_1$ is finite, which in turn holds if and only if there exists a so-called measure of the condenser $\mathbf A$, whose existence was analyzed earlier in different settings by Beurling, Deny, Kishi, Bliedtner, and Berg. Our analysis is based particularly on our recent result on the completeness of the cone of all positive Radon measures $\mu$ on $D$ with finite $E_g(\mu)$ in the metric determined by the norm $\|\mu\|_g:=\sqrt{E_g(\mu)}$. We also show that the pre-Hilbert space of Radon measures on $\mathbb R^n$ with finite weak $\alpha$-Riesz energy is isometrically imbedded into its completion, the Hilbert space of real-valued tempered distributions with finite energy, defined with the aid of Fourier transformation. This gives an answer in the negative to a question raised by Deny in 1950.
\end{abstract}
\maketitle

\section{Introduction}\label{sec1}
Let $D$ be an (open, connected) domain in $\mathbb R^n$, $n\geqslant3$. An ordered pair ${\mathbf A}=(A_1,D^c)$, where $A_1$ is a relatively closed subset of $D$ and $D^c:=\mathbb R^n\setminus D$, is termed a ({\it generalized\/}) {\it condenser\/} in $\mathbb R^n$, and $A_1$ and $D^c$ its {\it plates\/}. In a recent paper \cite{DFHSZ}, as a preparation for a study of minimum (standard) $\alpha$-Riesz energy problems over (signed) Radon measures on $\mathbb R^n$ associated with the condenser $\mathbf A$, one considered the $\alpha$-Green kernel $g=g_D^\alpha$ on $D$, associated with the $\alpha$-Riesz kernel $\kappa_\alpha(x,y):=|x-y|^{\alpha-n}$ of order $0<\alpha\leqslant2$ on $\mathbb R^n$ (see e.g.\ \cite[Chapter~IV, Section~5]{L}). It is stated in \cite[Lemma~2.4]{DFHSZ} that if a bounded positive Radon measure $\mu$ on $D$ has finite $\alpha$-Green energy
$E_g(\mu):=\iint g(x,y)\,d\mu(x)\,d\mu(y)$, then the (signed) Radon measure $\nu:=\mu-\mu^{D^c}$ on $\mathbb R^n$, $\mu^{D^c}$ being the $\alpha$-Riesz swept measure of $\mu$ onto $D^c$, must have finite $\alpha$-Riesz energy $E_\alpha(\nu):=\iint\kappa_\alpha(x,y)\,d\nu(x)\,d\nu(y)$
in the standard sense in which it is required that $E_\alpha(|\nu|)<\infty$, where $|\nu|:=\nu^++\nu^-$. Regrettably, the short proof of Lemma~2.4 in \cite{DFHSZ} was incomplete, and actually the lemma fails in general, as seen by the counterexample given in \cite[Appendix]{DFHSZ2}. To be precise, the quoted example shows that there exists a bounded positive Radon measure $\mu$ on $D$ with finite $E_g(\mu)$ such that $E_\alpha(\mu-\mu^{D^c})$ is not well defined.
In the present paper we argue that this failure may be viewed as an indication that the above standard notion of (finite) energy of signed measures is too restrictive when dealing with condenser problems.

We show in Theorem~\ref{thm} below that the quoted lemma does hold if we replace the standard concept of $\alpha$-Riesz energy $E_\alpha(\nu)$ of a (signed) Radon measure $\nu$ on $\mathbb R^n$ by a weaker concept, denoted ${\dot E}_\alpha(\nu)$ and defined essentially (see Definition~\ref{def-weak}) by
\begin{equation}\label{def-weakkk}{\dot E}_\alpha(\nu)=\int(\kappa_{\alpha/2}\nu)^2\,dm,\end{equation}
where it is required that $\kappa_{\alpha/2}\nu\in L^2(m)$, $\kappa_{\alpha/2}\nu(x):=\int\kappa_{\alpha/2}(x,y)\,d\nu(y)$ being the $\kappa_{\alpha/2}$-pot\-en\-tial of $\nu$ at $x\in\mathbb R^n$. (Throughout the paper $m$ denotes the $n$-dimensional Lebesgue measure on $\mathbb R^n$.) This definition of weak energy shall be seen in the light of the Riesz composition identity \cite{R}, cf.\ \cite[Eq.~1.1.3]{L}.

Denoting by $\mathcal E_\alpha(\mathbb R^n)$, resp.\ $\dot{\mathcal E}_\alpha(\mathbb R^n)$, the pre-Hilbert space of Radon measures $\nu$ on $\mathbb R^n$ with finite standard, resp.\ weak, $\alpha$-Riesz energy, we show in Theorem~\ref{dense} that $\mathcal E_\alpha(\mathbb R^n)$ is dense in $\dot{\mathcal E}_\alpha(\mathbb R^n)$ in the topology determined by the weak energy norm $\|\nu\|^{\cdot}_\alpha:=\sqrt{{\dot E}_\alpha(\nu)}$, as well as in the (induced) vague topology. This enables us to prove in Theorem~\ref{S-D} that $\dot{\mathcal E}_\alpha(\mathbb R^n)$ is isometrically imbedded into its completion, the Hilbert space $S_\alpha^*$ of real-valued tempered distributions $T\in S^*$ on $\mathbb R^n$ \cite{S} with finite energy, defined with the aid of the Fourier transform of $T$. Note that this result for $\mathcal E_\alpha(\mathbb R^n)$ in place of $\dot{\mathcal E}_\alpha(\mathbb R^n)$ goes back to Deny \cite{De1}. It is however still unknown whether there is a measure in $S_\alpha^*$ of noncompact support such that its weak energy is infinite (see Theorem~\ref{S-D-comp} and Section~\ref{open} below). A similar question for positive measures was raised by Deny \cite[p.~85]{De2}.

Combining Theorems~\ref{S-D} and~\ref{thm} with the counterexample in \cite[Appendix]{DFHSZ2} implies that there exists a linear combination of positive measures with infinite standard $\alpha$-Riesz energy and even with compact support, whose energy in $S_\alpha^*$ is finite. This gives an answer in the negative to the question raised by Deny in \cite[p.~125, Remarque]{De1}.

Based on Theorem~\ref{thm}, we next show that if $A_1$ is a relatively closed subset of $D$, then the finiteness of the $\alpha$-Green capacity $c_g(A_1)$ of $A_1$ is equivalent to
the existence of a (unique) solution $\dot{\lambda}_{{\mathbf A},\alpha}$ to the problem of minimizing ${\dot E}_\alpha(\nu)$ over a proper class of (signed) Radon measures $\nu$ on $\mathbb R^n$ with finite weak $\alpha$-Riesz energy, associated with the generalized condenser ${\mathbf A}:=(A_1,D^c)$. Furthermore, either of these two assertions is equivalent to
the existence of a so-called {\it condenser measure\/} $\mu_{{\mathbf A},\alpha}$ (see Definition~\ref{def-m-c}), analyzed earlier in different settings by Beurling and Deny \cite{D3} (see also \cite[Theorem~6.5]{L}), Kishi \cite{Ki}, Bliedtner \cite{Bl}, and Berg \cite{Berg}. Under the stated condition $c_g(A_1)<\infty$, it is shown that $\dot{\lambda}_{{\mathbf A},\alpha}$ and $\mu_{{\mathbf A},\alpha}$ are identical up to a normalizing factor, and they are related in the expected way to the $g^\alpha_D$-equilibrium measure on $A_1$. Finally, these results are specified for a standard condenser $\mathbf A$ with nonzero Euclidean distance between $A_1$ and $D^c$. See Theorems~\ref{th-ex}, \ref{th-st} and Section~\ref{rem-st}.

Our analysis is based particularly on our recent result on the perfectness of the $\alpha$-Green kernel $g$, which amounts to the completeness of the cone of all positive Radon measures $\mu$ on $D$ with finite $E_g(\mu)$ in the metric determined by the norm $\|\mu\|_g:=\sqrt{E_g(\mu)}$ \cite[Theorem~4.11]{FZ}.

\section{Preliminaries}\label{sec:princ}

Let $X$ be a locally compact (Hausdorff) space \cite[Chapter~I, Section~9, n$^\circ$\,7]{B1}, to be specified below, and $\mathfrak M(X)$ the linear
space of all real-valued (signed) Radon measures $\mu$ on $X$, equipped with the {\it vague\/} topology, i.e.\ the topology of
pointwise convergence on the class $C_0(X)$ of all continuous functions\footnote{When speaking of a continuous numerical function we understand that the values are {\it finite\/} real numbers.} on $X$ with compact
support. We refer the reader to \cite{B2} for the theory of measures and integration on a locally compact space, to be used throughout the paper (see also \cite{F1} for a short survey). In all that follows the integrals are understood as {\it upper\/} integrals~\cite{B2}.

For the purposes of the present study it is enough to assume that $X$ is metrizable and {\it countable at infinity\/}, where the latter means that $X$ can be represented as a countable union of compact sets \cite[Chapter~I, Section~9, n$^\circ$\,9]{B1}. Then the vague topology on $\mathfrak M(X)$ satisfies the first axiom of countability \cite[Remark~2.5]{DFHSZ1}, and vague convergence is entirely determined by convergence of sequences. The vague topology on $\mathfrak M(X)$ is Hausdorff; hence, a vague limit of any sequence in $\mathfrak M(X)$ is {\it unique\/} (whenever it exists).

We denote by $\mu^+$ and $\mu^-$ the positive and the negative parts, respectively, in the Hahn--Jordan decomposition of a measure $\mu\in\mathfrak M(X)$, and by $S^\mu_{X}=S(\mu)$ its support. A measure $\mu\in\mathfrak M(X)$ is said to be {\it bounded\/} if $|\mu|(X)<\infty$, where $|\mu|:=\mu^++\mu^-$. Let $\mathfrak M^+(X)$ stand for the (convex, vaguely closed) cone of all positive $\mu\in\mathfrak M(X)$.

We define a (function) {\it kernel\/} $\kappa(x,y)$ on $X$ as a positive, symmetric, lower semicontinuous (l.s.c.) function on $X\times X$. Given $\mu,\mu_1\in\mathfrak M(X)$, we denote by
$E_\kappa(\mu,\mu_1)$ and $\kappa\mu$, respectively, the (standard) {\it mutual
energy\/} and the {\it potential\/} relative to the kernel $\kappa$,
i.e.\footnote{When introducing notation about numerical quantities we assume
the corresponding object on the right to be well defined~--- as a finite real number or~$\pm\infty$.}
\begin{align*}
E_\kappa(\mu,\mu_1)&:=\iint\kappa(x,y)\,d\mu(x)\,d\mu_1(y),\\
\kappa\mu(x)&:=\int\kappa(x,y)\,d\mu(y),\quad x\in X.
\end{align*}
Note that $\kappa\mu(x)$ is well defined provided that $\kappa\mu^+(x)$ or $\kappa\mu^-(x)$ is finite, and then $\kappa\mu(x)=\kappa\mu^+(x)-\kappa\mu^-(x)$. In particular, if $\mu\in\mathfrak M^+(X)$ then $\kappa\mu(x)$ is defined everywhere and represents a l.s.c.\ positive function on $X$.
Also observe that $E_\kappa(\mu,\mu_1)$ is well defined and equal to $E_\kappa(\mu_1,\mu)$ provided that
$E_\kappa(\mu^+,\mu_1^+)+E_\kappa(\mu^-,\mu_1^-)$ or $E_\kappa(\mu^+,\mu_1^-)+E_\kappa(\mu^-,\mu_1^+)$ is finite.
For $\mu=\mu_1$, $E_\kappa(\mu,\mu_1)$ becomes the (standard) {\it energy\/} $E_\kappa(\mu)$. Let $\mathcal E_\kappa(X)$ consist
of all $\mu\in\mathfrak M(X)$ whose (standard) energy $E_\kappa(\mu)$ is finite, which means that $E_\kappa(\mu^+)$, $E_\kappa(\mu^-)$ and $E_\kappa(\mu^+,\mu^-)$ are all finite, and let $\mathcal E^+_\kappa(X):=\mathcal E_\kappa(X)\cap\mathfrak M^+(X)$.

Given a set $Q\subset X$, let $\mathfrak M^+(Q;X)$ consist of all $\mu\in\mathfrak M^+(X)$ {\it concentrated on\/}
$Q$, which means that $X\setminus Q$ is locally $\mu$-negligible, or equivalently that $Q$ is $\mu$-meas\-ur\-able and $\mu=\mu|_Q$, where $\mu|_Q=1_Q\cdot\mu$ is the trace (restriction) of $\mu$ on $Q$ \cite[Chapter~V, Section~5, n$^\circ$\,2, Example]{B2}. (Here $1_Q$ denotes the indicator function of $Q$.) If $Q$ is closed then $\mu$ is concentrated on $Q$ if and only if it is supported by $Q$, i.e.\ $S(\mu)\subset Q$. It follows from the countability of $X$ at infinity that the concept of local $\mu$-neg\-lig\-ibility coincides with that of $\mu$-negligibility; and hence $\mu\in\mathfrak M^+(Q;X)$ if and only if $\mu^*(X\setminus Q)=0$, $\mu^*(\cdot)$ being the {\it outer measure\/} of a set.
Write $\mathcal E_\kappa^+(Q;X):=\mathcal E_\kappa(X)\cap\mathfrak M^+(Q;X)$, $\mathfrak M^+(Q,q;X):=\{\mu\in\mathfrak M^+(Q;X):\ \mu(Q)=q\}$ and
$\mathcal E_\kappa^+(Q,q;X):=\mathcal E_\kappa(X)\cap\mathfrak M^+(Q,q;X)$, where $q\in(0,\infty)$.

Among the variety of potential-theoretic principles investigated for example in the comprehensive work by Ohtsuka~\cite{O} (see also the references therein), in the present study we shall only need the following two:
 \begin{itemize}
 \item[$\bullet$] A kernel $\kappa$ is said to satisfy the {\it complete maximum principle} (introduced by Cartan and Deny \cite{CD}) if for any $\mu\in\mathcal E^+_\kappa(X)$ and $\nu\in\mathfrak M^+(X)$ such that $\kappa\mu\leqslant\kappa\nu+c$ $\mu$-a.e., where $c\geqslant0$ is a constant, the same inequality holds everywhere on $X$.
\item[$\bullet$] A kernel $\kappa$ is said to be {\it positive definite\/} if $E_\kappa(\mu)\geqslant0$ for every (signed) measure $\mu\in\mathfrak M(X)$ for which the energy is well defined; and $\kappa$ is said to be {\it strictly positive definite\/}, or to satisfy the {\it energy principle\/} if in addition $E_\kappa(\mu)>0$ except for $\mu=0$.
\end{itemize}

{\it Unless explicitly stated otherwise, in all that
follows we assume a kernel $\kappa$ to satisfy the energy principle\/}. Then $\mathcal E_\kappa(X)$ forms a pre-Hil\-bert space with the inner product $\langle\mu,\nu\rangle_\kappa:=E_\kappa(\mu,\mu_1)$ and the (standard) energy norm $\|\mu\|_\kappa:=\sqrt{E_\kappa(\mu)}$ (see \cite{F1}). The (Hausdorff) topology
on $\mathcal E_\kappa(X)$ defined by the norm $\|\cdot\|_\kappa$ is termed {\it strong\/}.

The ({\it inner\/}) {\it capacity\/} $c_\kappa(Q)$ of a set $Q\subset X$ relative to the kernel $\kappa$ is given by \begin{equation}\label{cap-def}c_\kappa(Q)^{-1}:=\inf_{\mu\in\mathcal
E_\kappa^+(Q,1;X)}\,\|\mu\|_\kappa^2=:w_\kappa(Q)\end{equation}
(see e.g.\ \cite{F1,O}). Then $0\leqslant c_\kappa(Q)\leqslant\infty$. (As usual, here and in the sequel the
infimum over the empty set is taken to be $+\infty$. We also set
$1\bigl/(+\infty)=0$ and $1\bigl/0=+\infty$.)

Because of the strict positive definiteness of the kernel $\kappa$, $c_\kappa(K)<\infty$ for every compact set $K\subset X$.
Furthermore, by \cite[p.~153, Eq.~2]{F1},
\begin{equation}\label{compact}c_\kappa(Q)=\sup\,c_\kappa(K)\quad(K\subset Q, \ K\text{\ compact}).\end{equation}

An assertion $\mathcal U(x)$ involving a variable point $x\in X$ is said to hold {\it $c_\kappa$-ne\-ar\-ly everywhere\/} ({\it $c_\kappa$-n.e.\/}) on $Q\subset X$ if $c_\kappa(N)=0$ where $N$ consists of all $x\in Q$ for which $\mathcal U(x)$ fails to hold.
We shall often use the fact that $c_\kappa(N)=0$ if and only if $\mu_*(N)=0$ for every $\mu\in\mathcal E_\kappa^+(X)$, $\mu_*(\cdot)$ being the {\it inner measure\/} of a set (see \cite[Lemma~2.3.1]{F1}).

As in \cite[p.\ 134]{L}, we call a measure $\mu\in\mathfrak M(X)$ {\it $c_\kappa$-absolutely continuous\/} if $\mu(K)=0$ for every compact set $K\subset X$ with $c_\kappa(K)=0$. It follows from (\ref{compact}) that for such a $\mu$, $|\mu|_*(Q)=0$ for every $Q\subset X$ with $c_\kappa(Q)=0$. Hence every $\mu\in\mathcal E_\kappa(X)$ is $c_\kappa$-ab\-sol\-utely continuous; but not conversely \cite[pp.~134--135]{L}.

\begin{definition}\label{def-perf}Following~\cite{F1}, we call a (strictly positive definite)
kernel $\kappa$ {\it perfect\/} if every strong Cauchy sequence in $\mathcal E_\kappa^+(X)$ converges strongly to any of its vague cluster points\footnote{It follows from Theorem~\ref{fu-complete} that for a perfect kernel such a vague cluster point exists and is unique.}.\end{definition}

\begin{remark}\label{rem:clas} On $X=\mathbb R^n$, $n\geqslant3$, the $\alpha$-Riesz kernel $\kappa_\alpha(x,y)=|x-y|^{\alpha-n}$, $\alpha\in(0,n)$, is strictly positive definite and moreover perfect \cite{De1,De2}; thus so is the Newtonian kernel $\kappa_2(x,y)=|x-y|^{2-n}$ \cite{Ca}. Recently it has been shown by the present authors that if $X=D$ where $D$ is an arbitrary open set in $\mathbb R^n$, $n\geqslant3$, and $g^\alpha_D$, $\alpha\in(0,2]$, is the $\alpha$-Green kernel on $D$ \cite[Chapter~IV, Section~5]{L}, then $\kappa=g^\alpha_D$ is likewise strictly positive definite and moreover perfect \cite[Theorems~4.9, 4.11]{FZ}.\end{remark}

\begin{theorem}[{\rm see \cite{F1}}]\label{fu-complete} If a kernel\/ $\kappa$ on a locally compact space\/ $X$ is perfect, then the cone\/ $\mathcal E_\kappa^+(X)$ is strongly complete and the strong topology on\/ $\mathcal E_\kappa^+(X)$ is finer than the\/ {\rm(}induced\/{\rm)} vague topology on\/ $\mathcal E_\kappa^+(X)$.\end{theorem}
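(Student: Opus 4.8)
The plan is to reduce both assertions to a single estimate — a uniform bound on the mass an energy‑bounded measure can place on a compact set — together with the very definition of a perfect kernel. Throughout I work with sequences, which is legitimate in the present setting since $X$ is metrizable and countable at infinity, so that the vague topology is first countable; in the general locally compact framework of \cite{F1} one argues with filters instead, the structure of the proof being unchanged.

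First I would record the capacity estimate. For $\nu\in\mathcal E_\kappa^+(X)$ and any compact $K\subset X$ the restriction $\nu|_K$ belongs to $\mathcal E_\kappa^+(K;X)$ and $\|\nu|_K\|_\kappa\leqslant\|\nu\|_\kappa$, because $\nu=\nu|_K+\nu|_{X\setminus K}$ is a sum of positive measures whose mutual energy is nonnegative (and finite, all three partial energies being finite). Hence, by the definition (\ref{cap-def}) of capacity applied to $\nu|_K/\nu(K)$, one gets $\nu(K)^2\leqslant c_\kappa(K)\,\|\nu\|_\kappa^2$, where $c_\kappa(K)<\infty$ by strict positive definiteness. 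Now let $(\mu_j)$ be a strong Cauchy sequence in $\mathcal E_\kappa^+(X)$. Then $\sup_j\|\mu_j\|_\kappa<\infty$, so $\sup_j\mu_j(K)<\infty$ for every compact $K$; by the standard vague compactness criterion for uniformly bounded families of positive Radon measures, $(\mu_j)$ is vaguely relatively compact, and therefore has a vague cluster point $\mu_0\in\mathfrak M^+(X)$.

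Strong completeness now follows at once: the definition of a perfect kernel, applied to $(\mu_j)$ and its vague cluster point $\mu_0$, yields $\mu_0\in\mathcal E_\kappa^+(X)$ together with $\|\mu_j-\mu_0\|_\kappa\to0$. Since $\kappa$ satisfies the energy principle, $\|\cdot\|_\kappa$ is a genuine norm and the strong topology is Hausdorff, so the strong limit is unique; in particular any two vague cluster points of $(\mu_j)$, each being a strong limit of $(\mu_j)$, must coincide, which also justifies the footnote to Definition~\ref{def-perf}.

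For the comparison of topologies it suffices to check that the identity map from $\mathcal E_\kappa^+(X)$ with its (metrizable) strong topology onto $\mathcal E_\kappa^+(X)$ with the vague topology is sequentially continuous. So let $\mu_j\to\mu$ strongly. Then $(\mu_j)$ is strong Cauchy, hence vaguely relatively compact by the estimate above; by the perfectness of $\kappa$ every vague cluster point of $(\mu_j)$ is a strong limit of $(\mu_j)$, hence equals $\mu$ by uniqueness; and a vaguely relatively compact sequence with a single vague cluster point converges vaguely to that point. Thus $\mu_j\to\mu$ vaguely, completing the proof. The only step that demands genuine care is the passage from boundedness in energy to vague relative compactness via the capacity estimate; everything else is bookkeeping around the definition of a perfect kernel and the uniqueness of strong limits.
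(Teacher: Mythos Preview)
Your argument is correct. The paper itself does not prove this theorem but simply cites \cite{F1}; your proof is essentially the standard one given there, with the capacity inequality $\nu(K)^2\leqslant c_\kappa(K)\,\|\nu\|_\kappa^2$ providing vague relative compactness of strong Cauchy sequences and the definition of perfectness then yielding both completeness and the comparison of topologies.
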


\begin{remark}\label{remma}In contrast to Theorem~\ref{fu-complete}, for a perfect kernel $\kappa$ the whole pre-Hilbert space $\mathcal E_\kappa(X)$ is in general strongly {\it incomplete\/}, and this is the case even for the $\alpha$-Riesz kernel of order $\alpha\in(1,n)$ on $\mathbb R^n$, $n\geqslant 3$
(see \cite{Ca} and \cite[Theorem~1.19]{L}). Compare with \cite[Theorem~1]{ZUmzh} where the strong completeness has been established for the metric subspace of all ({\it signed\/}) $\nu\in\mathcal E_{\kappa_\alpha}(\mathbb R^n)$ such that $\nu^+$ and $\nu^-$ are supported by closed nonintersecting sets in $\mathbb R^n$, $n\geqslant3$. This result from \cite{ZUmzh} has been proved with the aid of Deny's theorem~\cite{De1} stating that $\mathcal E_{\kappa_\alpha}(\mathbb R^n)$ can be completed by making use of tempered distributions on $\mathbb R^n$ with finite $\alpha$-Riesz energy, defined in terms of Fourier transforms (compare with Remark~\ref{remark}).\end{remark}

\begin{remark}\label{remark} The concept of perfect kernel is an efficient tool in minimum energy problems
over classes of {\it positive\/} Radon measures with
finite energy. Indeed, if $Q\subset X$ is closed, $c_\kappa(Q)\in(0,+\infty)$, and $\kappa$ is perfect, then the problem (\ref{cap-def}) has a unique solution $\lambda_{Q,\kappa}$ \cite[Theorem~4.1]{F1}. We shall call such a $\lambda_{Q,\kappa}$ the ({\it inner\/}) {\it $\kappa$-capacitary measure\/} on~$Q$. Later the concept of perfectness has been shown to be efficient also in minimum energy problems over classes of ({\it signed\/}) measures associated with a {\it standard condenser\/} \cite{ZPot1,ZPot2} (see also Remark~\ref{r-3} below for a short survey). In contrast to \cite[Theorem~1]{ZUmzh}, the approach developed in \cite{ZPot1,ZPot2} substantially used the assumption of the boundedness of the kernel on the product of the oppositely charged plates of a condenser, which made it possible to extend Cartan's proof \cite{Ca} of the strong completeness of the cone $\mathcal E_{\kappa_2}^+(\mathbb R^n)$ of all {\it positive\/} measures on $\mathbb R^n$ with finite Newtonian energy to an arbitrary perfect kernel $\kappa$ on a locally compact space $X$ and suitable classes of ({\it signed\/}) measures $\mu\in\mathcal E_\kappa(X)$.\end{remark}

A set $Q\subset X$ is said to be {\it locally closed\/} in $X$ if for every $x\in Q$ there is a neighborhood $V$ of $x$ in $X$ such that $V\cap Q$ is a closed subset of the subspace $Q$ \cite[Chapter~I, Section~3, Definition~2]{B1}, or equivalently if $Q$ is the intersection of an open and a closed subset of $X$ \cite[Chapter~I, Section~3, Proposition~5]{B1}.
The latter implies that this $Q$ is universally measurable, and hence $\mathfrak M^+(Q;X)$ consists of all the restrictions $\mu|_Q$ where $\mu$ ranges over $\mathfrak M^+(X)$. On the other hand, by \cite[Chapter~I, Section~9, Proposition~13]{B1} a locally closed set $Q$ itself can be thought of as a locally compact subspace of $X$. Thus $\mathfrak M^+(Q;X)$ consists, in fact, of all those $\nu\in\mathfrak M^+(Q)$ for each of which there is $\widehat{\nu}\in\mathfrak M^+(X)$ with the property
\begin{equation}\label{extend}\widehat{\nu}(\varphi)=\int1_{Q}\varphi\,d\nu\text{ \ for every \ }\varphi\in C_0(X).\end{equation}
We say that such $\widehat{\nu}$ {\it extends\/} $\nu\in\mathfrak M^+(Q)$ by $0$ off $Q$ to all of $X$.
A sufficient condition for (\ref{extend}) to hold is that $\nu$ be bounded.

\section{$\alpha$-Riesz balayage and $\alpha$-Green kernel}\label{sec:RG} Fix $n\geqslant3$ and $\alpha\in(0,n)$. {\it Unless explicitly stated otherwise, in all that follows we assume that\/} $\alpha\leqslant2$. Fix also a domain $D\subset\mathbb R^n$ with $c_{\kappa_\alpha}(D^c)>0$, where $D^c:=\mathbb R^n\setminus D$, and assume that either $\kappa(x,y)=\kappa_\alpha(x,y):=|x-y|^{\alpha-n}$ is the {\it $\alpha$-Riesz kernel\/} on $X=\mathbb R^n$, or $\kappa(x,y)=g_D^\alpha(x,y)$ is the {\it $\alpha$-Green kernel\/} on $X=D$. For the definition of $g=g_D^\alpha$, see \cite[Chapter~IV, Section~5]{L} or see below. We shall simply write $\alpha$ instead of $\kappa_\alpha$ if $\kappa_\alpha$ serves as an index.

When speaking of a positive Radon measure $\mu\in\mathfrak M^+(\mathbb R^n)$, we always tacitly assume that for the given $\alpha$, $\kappa_\alpha\mu\not\equiv+\infty$. This implies that
\begin{equation}\label{1.3.10}\int_{|y|>1}\,\frac{d\mu(y)}{|y|^{n-\alpha}}<\infty\end{equation}
(see \cite[Eq.~1.3.10]{L}), and consequently that $\kappa_\alpha\mu$ is finite $c_\alpha$-n.e.\ on $\mathbb R^n$ \cite[Chap\-ter~III, Section~1]{L}; these two implications can actually be reversed.

We shall often use the short form 'n.e.' instead of '$c_\alpha$-n.e.' if this will not cause any mis\-under\-standing.

\begin{definition}\label{d-ext} $\nu\in\mathfrak M(D)$ is called {\it extendible\/} if there exist $\widehat{\nu^+}$ and $\widehat{\nu^-}$ extending $\nu^+$ and $\nu^-$, respectively, by~$0$ off $D$ to $\mathbb R^n$ (see (\ref{extend})), and if these $\widehat{\nu^+}$ and $\widehat{\nu^-}$ satisfy (\ref{1.3.10}).
We identify such a $\nu\in\mathfrak M(D)$ with its extension $\widehat\nu:=\widehat{\nu^+}-\widehat{\nu^-}$, and we therefore write $\widehat\nu=\nu$.\end{definition}

Every bounded measure $\nu\in\mathfrak M(D)$ is extendible. The converse holds if $D$ is bounded, but not in general (e.g.\ not if $D^c$ is compact). The set of all extendible measures consists of all the restrictions $\mu|_D$ where $\mu$ ranges over $\mathfrak M(\mathbb R^n)$. Also observe that for any extendible measure $\nu\in\mathfrak M(D)$, $\kappa_{\alpha}\nu$ is well defined and finite n.e.\ on $\mathbb R^n$, for $\kappa_\alpha\nu^\pm$ is so.

The {\it $\alpha$-Green kernel\/} $g=g_D^\alpha$ on $D$ is defined by
\begin{equation*}g^\alpha_D(x,y)=\kappa_\alpha\varepsilon_y(x)-
\kappa_\alpha\varepsilon_y^{D^c}(x)\text{ \ for all \ }x,y\in D,\end{equation*}
where $\varepsilon_y$ denotes the unit Dirac measure at a
point $y$ and $\varepsilon_y^{D^c}$ its {\it $\alpha$-Riesz balayage\/} (sweeping) onto the (closed) set $D^c$, uniquely determined in the frame of the classical approach by \cite[Theorem~3.6]{FZ} pertaining to positive Radon measures on $\mathbb R^n$. See also the book by Bliedtner and Hansen \cite{BH} where balayage is studied in the setting of balayage spaces.

We shall simply write $\mu'$ instead of $\mu^{D^c}$ when speaking of the $\alpha$-Riesz balayage of $\mu\in\mathfrak M^+(D;\mathbb R^n)$ onto $D^c$. According to \cite[Corollaries~3.19, 3.20]{FZ}, for any $\mu\in\mathfrak M^+(D;\mathbb R^n)$ {\it the balayage\/ $\mu'$ is\/ $c_\alpha$-absolutely continuous and it is determined uniquely by relation
\begin{equation}\label{bal-eq}\kappa_\alpha\mu'=\kappa_\alpha\mu\text{ \ n.e.\ on \ }D^c\end{equation}
among the\/ $c_\alpha$-absolutely continuous measures supported by\/} $D^c$. If moreover\/ $\mu\in\mathcal E_\alpha^+(D;\mathbb R^n)$, then the balayage $\mu'$ is in fact
{\it the orthogonal projection of\/ $\mu$ on the convex cone\/ $\mathcal E^+_\alpha(D^c;\mathbb R^n)$} \cite[Theorem~3.1]{FZ}, i.e.\ $\mu'\in\mathcal E^+_\alpha(D^c;\mathbb R^n)$ and
\begin{equation}\label{proj}\|\mu-\theta\|_\alpha>\|\mu-\mu'\|_\alpha\text{ \ for all \ }\theta\in\mathcal E^+_\alpha(D^c;\mathbb R^n),\quad\theta\ne\mu'.\end{equation}

If now $\nu\in\mathfrak M(D)$ is an extendible (signed) measure, then $\nu':=\nu^{D^c}:=(\nu^+)^{D^c}-(\nu^-)^{D^c}$
is said to be a {\it balayage\/} of $\nu$ onto $D^c$. It follows from \cite[p.~178, Remark]{L} that the balayage $\nu'$ is determined uniquely by (\ref{bal-eq}) with $\nu$ in place of $\mu$ among the $c_\alpha$-absolutely continuous (signed) measures supported by~$D^c$.

\begin{definition}[{\rm see \cite[Theorem~VII.13]{Brelo2}}]A closed set $F\subset\mathbb R^n$ is said to be {\it $\alpha$-thin
at infinity\/} if either $F$ is compact, or the inverse of $F$ relative to $\{x\in\mathbb R^n: |x|=1\}$ has $x=0$ as an $\alpha$-irregular boundary point (cf.\ \cite[Theorem~5.10]{L}).\end{definition}

\begin{remark}\label{rem-thin}Any closed set $F$ that is not $\alpha$-thin at infinity is of infinite capacity $c_\alpha(F)$. Indeed, by the Wiener criterion of $\alpha$-regularity, $F$ is not $\alpha$-thin at infinity if and only if
\[\sum_{k\in\mathbb N}\,\frac{c_\alpha(F_k)}{q^{k(n-\alpha)}}=\infty,\]
where $q>1$ and $F_k:=F\cap\{x\in\mathbb R^n: q^k\leqslant|x|<q^{k+1}\}$, while by \cite[Lemma~5.5]{L} $c_\alpha(F)<\infty$ is equivalent to the relation
\[\sum_{k\in\mathbb N}\,c_\alpha(F_k)<\infty.\]
These observations also imply that the converse is not true, i.e.\ there is $F$ with $c_\alpha(F)=\infty$, but $\alpha$-thin at infinity (see also \cite[pp.~276--277]{Ca2}).\end{remark}

\begin{example}[{\rm see \cite[Example~5.3]{ZPot1}}]\label{ex-thin}Let $n=3$ and $\alpha=2$. Define the rotation body
\begin{equation}\label{descr}F_\varrho:=\bigl\{x\in\mathbb R^3: \ 0\leqslant x_1<\infty, \ x_2^2+x_3^2\leqslant\varrho^2(x_1)\bigr\},\end{equation}
where $\varrho$ is given by one of the following three formulae:
\begin{align}
\label{c1}\varrho(x_1)&=x_1^{-s}\text{ \ with \ }s\in[0,\infty),\\
\label{c2}\varrho(x_1)&=\exp(-x_1^s)\text{ \ with \ }s\in(0,1],\\
\label{c3}\varrho(x_1)&=\exp(-x_1^s)\text{ \ with \ }s\in(1,\infty).
\end{align}
Then $F_\varrho$ is not $2$-thin at infinity if $\varrho$ is defined by (\ref{c1}), $F_\varrho$ is $2$-thin at infinity but has infinite Newtonian capacity if $\varrho$ is given  by (\ref{c2}), and finally $c_2(F_\varrho)<\infty$ if (\ref{c3}) holds.
\end{example}

\begin{theorem}[{\rm see \cite[Theorem~3.22]{FZ}}]\label{bal-mass-th} The set\/ $D^c$ is not\/ $\alpha$-thin at infinity if and only if for every bounded measure\/ $\mu\in\mathfrak M^+(D)$ we have $\mu'(\mathbb R^n)=\mu(\mathbb R^n)$.\footnote{In general, $\nu^{D^c}(\mathbb R^n)\leqslant\nu(\mathbb R^n)$ for every $\nu\in\mathfrak M^+(\mathbb R^n)$ \cite[Theorem~3.11]{FZ}.\label{foot-bound}}
\end{theorem}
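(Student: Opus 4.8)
The plan is to prove the equivalence by relating the total mass of $\mu'$ to a Wiener-type criterion, using the already-established mass relation $\mu'(\mathbb R^n)\leqslant\mu(\mathbb R^n)$ (footnote~\ref{foot-bound}) and the characterization of $\alpha$-thinness at infinity via the inversion $x\mapsto x/|x|^2$.

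\emph{Sufficiency.} Suppose $D^c$ is not $\alpha$-thin at infinity, and let $\mu\in\mathfrak M^+(D)$ be bounded. First I would reduce to the case $\mu=\varepsilon_y$, $y\in D$: indeed $\mu'=\int\varepsilon_y'\,d\mu(y)$ by linearity and uniqueness of balayage (\ref{bal-eq}), so by Fubini $\mu'(\mathbb R^n)=\int\varepsilon_y'(\mathbb R^n)\,d\mu(y)$, and it suffices to show $\varepsilon_y'(\mathbb R^n)=1$ for each $y\in D$. Equivalently, writing $u_y:=\kappa_\alpha\varepsilon_y-\kappa_\alpha\varepsilon_y'=g_D^\alpha(\cdot,y)$, one must show that $u_y$ does not "lose mass at infinity", i.e.\ that the Riesz measure associated with the superharmonic-type function $\kappa_\alpha\varepsilon_y'$ has the same total mass as $\varepsilon_y$. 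The key analytic input is the Kelvin transform: inverting in the unit sphere turns the behaviour of $\kappa_\alpha\varepsilon_y'$ near infinity into its behaviour near the origin, and the hypothesis that $D^c$ is not $\alpha$-thin at infinity says precisely that the inverted set $\check{D^c}$ is $\alpha$-non-thin at $0$, hence $0$ is an $\alpha$-regular boundary point. Regularity at $0$ forces the balayage potential (after Kelvin transform) to attain the "right" boundary value at $0$, which upon transforming back yields $\kappa_\alpha\varepsilon_y'(x)/\kappa_\alpha\varepsilon_y(x)\to1$ as $|x|\to\infty$ along a suitable approach; combined with the asymptotics $\kappa_\alpha\varepsilon_y(x)\sim|x|^{\alpha-n}$ and the Wiener-type integral representation of the mass of a $c_\alpha$-absolutely continuous measure supported by $D^c$, this gives $\varepsilon_y'(\mathbb R^n)=1$.

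\emph{Necessity.} Conversely, suppose $D^c$ \emph{is} $\alpha$-thin at infinity. I would produce a single bounded $\mu\in\mathfrak M^+(D)$ with $\mu'(\mathbb R^n)<\mu(\mathbb R^n)$; by the reduction above it is enough to exhibit one $y\in D$ with $\varepsilon_y'(\mathbb R^n)<1$. Thinness of $D^c$ at infinity means the inverted set is $\alpha$-thin at $0$, so $0$ is an $\alpha$-irregular boundary point; by the characterization of irregular points (cf.\ \cite[Theorem~5.10]{L}) the balayage of a unit mass onto the inverted set has potential strictly less than the original at (sequences approaching) $0$, equivalently $\kappa_\alpha\varepsilon_y'(x)<\kappa_\alpha\varepsilon_y(x)$ with a genuine asymptotic deficit as $|x|\to\infty$. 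Quantifying this deficit through the same integral formula for the mass — using that $\varepsilon_y'$ is $c_\alpha$-absolutely continuous and supported by $D^c$, so that its mass is recovered from $\lim_{|x|\to\infty}|x|^{n-\alpha}\kappa_\alpha\varepsilon_y'(x)$ (this limit existing because of (\ref{1.3.10}) and the thinness) — shows the limit is $<1$, i.e.\ $\varepsilon_y'(\mathbb R^n)<1$.

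I expect the main obstacle to be making rigorous the passage "regularity/irregularity of $0$ for the inverted set $\Longleftrightarrow$ the precise value of $\lim_{|x|\to\infty}|x|^{n-\alpha}\kappa_\alpha\varepsilon_y'(x)$", i.e.\ transferring the Wiener criterion across the Kelvin transform and identifying the mass of the swept measure with that asymptotic constant. One has to be careful that the Kelvin transform of an $\alpha$-Riesz potential is again an $\alpha$-Riesz potential (with a weight), that balayage commutes with inversion up to this weight, and that the relevant limit at infinity exists — which is where $\alpha$-thinness of $D^c$ at infinity is used in an essential way for the necessity direction. Most of the remaining steps (linearity and Fubini for balayage, the reduction to Dirac masses, the asymptotics of $\kappa_\alpha\varepsilon_y$) are routine given the results quoted from \cite{FZ} and \cite{L}.
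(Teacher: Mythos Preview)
The present paper does not contain a proof of this theorem; it is merely quoted from \cite[Theorem~3.22]{FZ}, so there is no in-paper argument against which to compare your proposal.

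As to the proposal itself: the outline is along the standard lines for results of this type. The reduction to Dirac masses via the integral representation $\mu'=\int\varepsilon_y'\,d\mu(y)$ is legitimate, and the Kelvin transform is indeed the natural device linking $\alpha$-thinness of $D^c$ at infinity with $\alpha$-regularity or $\alpha$-irregularity of the origin for the inverted set. The step you yourself flag as the main obstacle --- identifying $\varepsilon_y'(\mathbb R^n)$ with the asymptotic coefficient of $\kappa_\alpha\varepsilon_y'(x)$ at infinity and transporting balayage through the inversion --- is genuinely the crux, and your sketch does not yet supply it. Concretely, one must check that the Kelvin transform carries $\varepsilon_y'$ to (a constant multiple of) the balayage of the transformed Dirac mass onto the inverted closed set, and that $\alpha$-regularity of the origin forces the swept potential to take the correct boundary value there; conversely, $\alpha$-irregularity must be converted into a strict deficit of that boundary value. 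Also, the claimed existence of $\lim_{|x|\to\infty}|x|^{n-\alpha}\kappa_\alpha\varepsilon_y'(x)$ is not automatic for a measure supported on an unbounded set, so in the necessity direction you should work with a $\limsup$ (or pass entirely to the Kelvin-transformed picture, where the relevant quantity becomes a pointwise value of a potential at the origin). The overall strategy is correct in spirit, but as written it is a plan rather than a proof.
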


As noted in Remark~\ref{rem:clas} above, the $\alpha$-Riesz kernel $\kappa_\alpha$ on $\mathbb R^n$ and the $\alpha$-Green kernel $g=g^\alpha_D$ on $D$ are both strictly positive definite and moreover perfect. Furthermore, the $\alpha$-Riesz kernel $\kappa_\alpha$ (with $\alpha\in(0,2]$) satisfies the complete maximum principle in the form stated in Section~\ref{sec:princ} (see \cite[Theorems~1.27, 1.29]{L}). Regarding a similar result for the $\alpha$-Green kernel $g$, the following assertion holds.

\begin{theorem}[{\rm see \cite[Theorem~4.6]{FZ}}]\label{th-dom-pr} Let\/ $\mu\in\mathcal E^+_g(D)$, let\/ $\nu\in\mathfrak M^+(D)$ be extendible, and let\/ $w$ be a positive\/ $\alpha$-super\-har\-monic function on\/ $\mathbb R^n$ {\rm\cite[{\it Chapter\/}~I, {\it Section\/}~5, {\it n\/}$^\circ$\,20]{L}}. If moreover\/ $g\mu\leqslant g\nu+w$ $\mu$-a.e.\ on\/ $D$, then the same inequality holds on all of\/~$D$.\end{theorem}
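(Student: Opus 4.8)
The plan is to argue entirely inside the pre‑Hilbert space $\mathcal E_g(D)$, combining the strict positive definiteness of $g$ (Remark~\ref{rem:clas}) with the Riesz decomposition of nonnegative $\alpha$-superharmonic functions on $D$. Write $v:=g\nu+w$. Since $\nu\in\mathfrak M^+(D)$ is extendible, $g\nu=\kappa_\alpha\nu-\kappa_\alpha\nu'$ is a nonnegative $\alpha$-superharmonic function on $D$ ($\kappa_\alpha\nu'$ being $\alpha$-harmonic on $D$ and $\leqslant\kappa_\alpha\nu$ there), and $w$ restricts to a nonnegative $\alpha$-superharmonic function on $D$; hence so does $v$. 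Likewise $g\mu$ is a nonnegative $\alpha$-superharmonic function on $D$ --- in fact a \emph{$g$-potential}, so that its greatest $\alpha$-harmonic minorant on $D$ vanishes --- and $g\mu\not\equiv+\infty$ because $E_g(\mu)<\infty$.

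Next I would put $u:=\min(g\mu,v)$, which is a nonnegative $\alpha$-superharmonic function on $D$ with $u\leqslant g\mu$. Being dominated by the $g$-potential $g\mu$, $u$ also has vanishing greatest $\alpha$-harmonic minorant, so the Riesz decomposition theorem yields $u=g\sigma$ for some $\sigma\in\mathfrak M^+(D)$. Since $g\mu\leqslant v$ holds $\mu$-a.e.\ on $D$ by hypothesis, $g\sigma=u=g\mu$ $\mu$-a.e.\ on $D$, whereas $g\sigma=u\leqslant g\mu$ everywhere on $D$. Then Tonelli's theorem gives
\[E_g(\sigma)=\int g\sigma\,d\sigma\leqslant\int g\mu\,d\sigma=\int g\sigma\,d\mu=\int g\mu\,d\mu=E_g(\mu)<\infty ,\]
where the inequality uses $g\sigma\leqslant g\mu$ on $D$ and the third equality uses $g\sigma=g\mu$ $\mu$-a.e.; in particular $\sigma\in\mathcal E_g^+(D)$ and $E_g(\mu,\sigma)=\int g\sigma\,d\mu=E_g(\mu)$. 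Consequently
\[\|\mu-\sigma\|_g^2=E_g(\mu)-2E_g(\mu,\sigma)+E_g(\sigma)=E_g(\sigma)-E_g(\mu)\leqslant0 ,\]
and since $g$ satisfies the energy principle this forces $\mu=\sigma$. Hence $g\mu=g\sigma=u=\min(g\mu,g\nu+w)\leqslant g\nu+w$ everywhere on $D$, as asserted.

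The step I expect to be the main obstacle is the claim that $u=\min(g\mu,g\nu+w)$ is itself a $g$-potential: one has to verify that a nonnegative $\alpha$-superharmonic function on $D$ which is dominated by a $g$-potential has zero greatest $\alpha$-harmonic minorant, and hence is of the form $g\sigma$ by the Riesz representation of $\alpha$-superharmonic functions on $D$. For $\alpha=2$ this is classical potential theory; for $0<\alpha<2$ it rests on the potential theory attached to the $\alpha$-Riesz kernel --- concretely, on $g=g^\alpha_D$ being the potential kernel of a balayage space in the sense of Bliedtner and Hansen \cite{BH}, where both the Riesz decomposition and the identification of the $g$-potentials as the pure potentials of the space are available. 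Two lighter accompanying points also need care: that $g\nu$ is genuinely nonnegative and $\alpha$-superharmonic on $D$ for every extendible $\nu\in\mathfrak M^+(D)$, and that the minimum of two nonnegative $\alpha$-superharmonic functions on $D$ is again one; both follow routinely from the representation $g\nu=\kappa_\alpha\nu-\kappa_\alpha\nu'$ together with $\kappa_\alpha\nu'\leqslant\kappa_\alpha\nu$ and the standard properties of $\alpha$-superharmonic functions.
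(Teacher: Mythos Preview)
The present paper does not give a proof of this statement; it is quoted from \cite[Theorem~4.6]{FZ}. Your argument is the classical Cartan energy method for domination principles and is correct: set $u=\min(g\mu,g\nu+w)$, represent $u=g\sigma$ via the Riesz decomposition of nonnegative $\alpha$-superharmonic functions on $D$, and use the strict positive definiteness of $g$ (Remark~\ref{rem:clas}) together with $g\sigma=g\mu$ $\mu$-a.e.\ and $g\sigma\leqslant g\mu$ on $D$ to force $\sigma=\mu$. The one nontrivial external input --- that a nonnegative $\alpha$-superharmonic minorant on $D$ of a $g$-potential is itself a $g$-potential --- you have singled out correctly; it is supplied by the balayage-space theory of \cite{BH} for $0<\alpha<2$ and is classical for $\alpha=2$. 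This is precisely the route taken in \cite{FZ}, so your proposal coincides with the intended proof.
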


The following four lemmas establish relations between potentials and energies relative to the kernels $\kappa_\alpha$ and $g=g^\alpha_D$, respectively.

\begin{lemma}[{\rm see \cite[Lemma~3.4]{DFHSZ2}}]\label{l-hatg} For any extendible measure\/ $\mu\in\mathfrak M(D)$ the $\alpha$-Green potential\/ $g\mu$ is well defined and finite\/ {\rm(}$c_\alpha$-{\rm)}n.e.\ on\/ $D$ and given by\footnote{If $N$ is a given subset of $D$, then $c_g(N)=0$ if and only if $c_\alpha(N)=0$ \cite[Lemma~2.6]{DFHSZ}. Thus any assertion involving a variable point holds n.e.\ on $Q\subset D$ if and only if it holds $c_g$-n.e.\ on $Q$.\label{RG}}
\begin{equation}\label{hatg}g\mu=\kappa_\alpha\mu-\kappa_\alpha\mu'\text{ \ n.e.\ on \ }D.\end{equation}
\end{lemma}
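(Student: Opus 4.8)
The plan is to reduce the statement to the already-established definition of the $\alpha$-Green kernel together with the superposition (Fubini) principle for balayage. Recall that by definition $g=g^\alpha_D$ satisfies $g(x,y)=\kappa_\alpha\varepsilon_y(x)-\kappa_\alpha\varepsilon_y^{D^c}(x)$ for all $x,y\in D$, so the natural route is to integrate this pointwise identity in $y$ against $\mu$. Concretely, first I would treat the case $\mu\in\mathfrak M^+(D)$ extendible, and then obtain the signed case by applying that result to $\mu^+$ and $\mu^-$ separately and subtracting (which is legitimate precisely because, as noted just before the lemma, $\kappa_\alpha\mu^\pm$ and $\kappa_\alpha(\mu^\pm)'$ are finite n.e.\ on $\mathbb R^n$, so that no $\infty-\infty$ ambiguity arises off a set of capacity zero; here footnote~\ref{RG} lets me pass freely between $c_\alpha$- and $c_g$-nullity).

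For $\mu\in\mathfrak M^+(D)$ extendible, the key step is the identity
\begin{equation*}
\int\kappa_\alpha\varepsilon_y^{D^c}(x)\,d\mu(y)=\kappa_\alpha\mu'(x)\quad\text{for }x\in D,
\end{equation*}
i.e.\ that balayage commutes with the superposition $\mu=\int\varepsilon_y\,d\mu(y)$. This is the one point that needs care: one has to verify that $\mu'=\int\varepsilon_y^{D^c}\,d\mu(y)$ as measures, and then apply Tonelli for the positive l.s.c.\ kernel $\kappa_\alpha$ to move the $x$-potential inside the $y$-integral. The measure $\int\varepsilon_y^{D^c}\,d\mu(y)$ is $c_\alpha$-absolutely continuous (being an integral of such measures) and supported by $D^c$, and by Tonelli its $\kappa_\alpha$-potential equals $\int\kappa_\alpha\varepsilon_y^{D^c}(x)\,d\mu(y)$, which by (\ref{bal-eq}) applied to each $\varepsilon_y$ agrees n.e.\ on $D^c$ with $\int\kappa_\alpha\varepsilon_y(x)\,d\mu(y)=\kappa_\alpha\mu(x)$; hence by the uniqueness clause in the cited characterization of balayage from \cite[Corollaries~3.19, 3.20]{FZ} this measure is exactly $\mu'$. (Alternatively one may invoke the corresponding superposition statement for balayage directly from \cite{FZ}.) Granting this, Tonelli gives
\begin{equation*}
\int\kappa_\alpha\varepsilon_y(x)\,d\mu(y)=\kappa_\alpha\mu(x),\qquad\int\kappa_\alpha\varepsilon_y^{D^c}(x)\,d\mu(y)=\kappa_\alpha\mu'(x),
\end{equation*}
both valid for every $x$ in the positive case, and subtraction of the defining relation for $g$, integrated against $\mu$, yields $g\mu(x)=\kappa_\alpha\mu(x)-\kappa_\alpha\mu'(x)$ for every $x\in D$; in particular $g\mu$ is well defined and, since $\kappa_\alpha\mu$ is finite n.e.\ on $\mathbb R^n$ by extendibility (via (\ref{1.3.10})) and $\kappa_\alpha\mu'\le\kappa_\alpha\mu$ everywhere on $D^c$ hence, by the complete maximum principle for $\kappa_\alpha$, everywhere on $\mathbb R^n$, the difference is finite n.e.\ on $D$.

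For the signed case, writing $\mu=\mu^+-\mu^-$ with both parts extendible, the positive case gives $g\mu^\pm=\kappa_\alpha\mu^\pm-\kappa_\alpha(\mu^\pm)'$ on $D$, each term on the right being finite n.e.; subtracting and using $g\mu:=g\mu^+-g\mu^-$ and $\mu':=(\mu^+)'-(\mu^-)'$ yields (\ref{hatg}) n.e.\ on $D$, and the exceptional set (a finite union of $c_\alpha$-null sets) is $c_\alpha$-null, equivalently $c_g$-null. I expect the main obstacle to be the rigorous justification of the balayage-superposition identity $\mu'=\int\varepsilon_y^{D^c}\,d\mu(y)$ — keeping track of $c_\alpha$-absolute continuity, the n.e.\ (not everywhere) validity of (\ref{bal-eq}), and the applicability of Tonelli for the upper-integral conventions in force — rather than the final algebra, which is immediate once that identity is in hand.
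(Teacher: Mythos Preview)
The paper does not supply a proof of this lemma; it merely cites \cite[Lemma~3.4]{DFHSZ2}. Your outline is the standard argument one would expect in that reference: integrate the pointwise definition $g(x,y)=\kappa_\alpha(x,y)-\kappa_\alpha\varepsilon_y'(x)$ against $\mu$, use the superposition identity $\mu'=\int\varepsilon_y'\,d\mu(y)$ to rewrite the second term as $\kappa_\alpha\mu'$, and then handle the signed case by Hahn--Jordan decomposition. The structure is correct and complete in spirit.

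Two remarks on the execution. First, your assertion that the identity holds ``for every $x\in D$'' in the positive case is slightly too strong: at points where $\kappa_\alpha\mu(x)=\infty$ the subtraction needs care, and indeed the lemma only claims the identity n.e.; restrict to the ($c_\alpha$-co-null) set where $\kappa_\alpha\mu$ is finite and the splitting via Tonelli is unproblematic. Second, the step ``by (\ref{bal-eq}) applied to each $\varepsilon_y$ agrees n.e.\ on $D^c$'' hides a genuine point: a priori the exceptional set in $\kappa_\alpha\varepsilon_y'=\kappa_\alpha\varepsilon_y$ on $D^c$ could depend on $y$, in which case integrating over $y$ would not give an n.e.\ statement. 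For the $\alpha$-Riesz balayage with $\alpha\leqslant2$ this exceptional set is precisely the set of $\alpha$-irregular boundary points of $D^c$, which is $y$-independent and $c_\alpha$-null; once you say this explicitly, the uniqueness clause for balayage applies cleanly and the superposition identity follows. You correctly flagged this as the main obstacle.
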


\begin{lemma}[{\rm see \cite[Lemma~3.5]{DFHSZ2}}]\label{l-hen'} Suppose that\/ $\mu\in\mathfrak M(D)$ is extendible and the extension belongs to\/ $\mathcal E_\alpha(\mathbb R^n)$. Then\/ $\mu\in\mathcal E_g(D)$, $\mu-\mu'\in\mathcal E_\alpha(\mathbb R^n)$ and moreover\footnote{Compare with the faulty Lemma~2.4 from \cite{DFHSZ}, mentioned in the Introduction.}
\begin{equation}\label{gr}\|\mu\|^2_g=\|\mu-\mu'\|^2_\alpha=\|\mu\|^2_\alpha-\|\mu'\|^2_\alpha.\end{equation}
\end{lemma}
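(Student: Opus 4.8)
The plan is to deduce Lemma~\ref{l-hen'} from the preceding Lemma~\ref{l-hatg} together with the projection property~(\ref{proj}) of $\alpha$-Riesz balayage. First I would set $\widehat\mu\in\mathcal E_\alpha(\mathbb R^n)$ to be the extension of $\mu$, so that $\mu^\pm$ extend to $\widehat{\mu^\pm}\in\mathcal E_\alpha^+(D;\mathbb R^n)$ (each has finite $\alpha$-Riesz energy since $E_\alpha(\widehat\mu)$ finite forces $E_\alpha(\widehat{\mu^+})$, $E_\alpha(\widehat{\mu^-})$, and $E_\alpha(\widehat{\mu^+},\widehat{\mu^-})$ all finite). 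By the quoted result \cite[Theorem~3.1]{FZ}, the balayage $(\mu^\pm)'$ is the orthogonal projection of $\widehat{\mu^\pm}$ onto the convex cone $\mathcal E_\alpha^+(D^c;\mathbb R^n)$; in particular $(\mu^\pm)'\in\mathcal E_\alpha^+(\mathbb R^n)$, whence $\mu'=(\mu^+)'-(\mu^-)'$ and $\mu-\mu'$ both lie in $\mathcal E_\alpha(\mathbb R^n)$. This already gives the membership assertions; it remains to prove the two displayed identities in~(\ref{gr}).

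For the energy identities I would argue in the Hilbert space $\mathcal E_\alpha(\mathbb R^n)$. The key point is the orthogonality relation $\langle\mu-\mu',\mu'\rangle_\alpha=0$: since balayage is the orthogonal projection on the cone, for positive measures one has $\langle\widehat{\mu^\pm}-(\mu^\pm)',\,(\mu^\pm)'\rangle_\alpha=0$ and, more relevantly, $\langle\widehat{\mu^\pm}-(\mu^\pm)',\,\theta\rangle_\alpha\le 0$ for every $\theta$ in the cone with equality for $\theta=(\mu^\pm)'$; expanding by bilinearity over the signed $\mu=\mu^+-\mu^-$ yields $\langle\mu-\mu',\mu'\rangle_\alpha=0$. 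Granting this, the Pythagorean theorem gives $\|\mu\|_\alpha^2=\|\mu-\mu'\|_\alpha^2+\|\mu'\|_\alpha^2$, which is the second equality in~(\ref{gr}). For the first equality, $\|\mu\|_g^2=\|\mu-\mu'\|_\alpha^2$, I would compute $\|\mu\|_g^2=\int g\mu\,d\mu$ and use Lemma~\ref{l-hatg}, which gives $g\mu=\kappa_\alpha\mu-\kappa_\alpha\mu'=\kappa_\alpha(\mu-\mu')$ n.e.\ on $D$, hence $\mu$-a.e.\ (as $\mu\in\mathcal E_g(D)$ is $c_g$-absolutely continuous, and $c_g$-negligible sets are $c_\alpha$-negligible by footnote~\ref{RG}). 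Therefore $\|\mu\|_g^2=\int\kappa_\alpha(\mu-\mu')\,d\mu$. Since $\mu-\mu'\in\mathcal E_\alpha(\mathbb R^n)$ and $\mu$ is concentrated on $D$ while $\mu'$ is supported by $D^c$, and $\kappa_\alpha(\mu-\mu')=0$ n.e.\ on $D^c$ by~(\ref{bal-eq}) (so $\mu'$-a.e.), I can write $\int\kappa_\alpha(\mu-\mu')\,d\mu=\int\kappa_\alpha(\mu-\mu')\,d(\mu-\mu')=\|\mu-\mu'\|_\alpha^2$, giving the first identity.

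The main obstacle I anticipate is the careful handling of the upper-integral and $c_\alpha$-absolute-continuity technicalities needed to pass from "n.e." statements to "a.e." statements and to justify the manipulations $\int\kappa_\alpha(\mu-\mu')\,d\mu=\int\kappa_\alpha(\mu-\mu')\,d(\mu-\mu')$ for a signed measure whose potential is only defined n.e. One must know that $\mu$, $\mu'$, $\mu^\pm$, $(\mu^\pm)'$ are all $c_\alpha$-absolutely continuous (true since they belong to $\mathcal E_\alpha(\mathbb R^n)$), that $\kappa_\alpha(\mu-\mu')$ agrees with a Borel function off a $c_\alpha$-negligible set, and that all the relevant energies $E_\alpha(\mu^+,\mu^-)$ etc.\ are finite so that Fubini-type rearrangements are legitimate; this is where one should invoke \cite[p.~178, Remark]{L} and the finiteness package carried by $\mathcal E_\alpha(\mathbb R^n)$. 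The remaining steps — bilinear expansion, Pythagoras, and substitution via Lemma~\ref{l-hatg} — are then routine.
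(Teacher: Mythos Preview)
The paper does not prove Lemma~\ref{l-hen'} here; it merely cites \cite[Lemma~3.5]{DFHSZ2}. Your outline is the natural one and is essentially correct, but one step has a genuine gap.

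Your derivation of the orthogonality $\langle\mu-\mu',\mu'\rangle_\alpha=0$ from the cone-projection property~(\ref{proj}) is incomplete. Projection onto the cone $\mathcal E_\alpha^+(D^c;\mathbb R^n)$ gives $\langle\mu^\pm-(\mu^\pm)',(\mu^\pm)'\rangle_\alpha=0$ together with $\langle\mu^\pm-(\mu^\pm)',\theta\rangle_\alpha\leqslant0$ for every $\theta$ in the cone, but for the \emph{cross terms} $\langle\mu^+-(\mu^+)',(\mu^-)'\rangle_\alpha$ and $\langle\mu^--(\mu^-)',(\mu^+)'\rangle_\alpha$ this yields only one-sided inequalities, and bilinear expansion then gives no conclusion. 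The correct route is the one you already use a few lines later: by~(\ref{bal-eq}) one has $\kappa_\alpha(\mu-\mu')=0$ n.e.\ on $D^c$, and since $(\mu^\pm)'$ (hence $|\mu'|$) are $c_\alpha$-absolutely continuous finite-energy measures supported by $D^c$, integration gives $\int\kappa_\alpha(\mu-\mu')\,d\mu'=0$ directly. That single observation yields both $\|\mu-\mu'\|_\alpha^2=\int\kappa_\alpha(\mu-\mu')\,d\mu$ and, via $\langle\mu,\mu'\rangle_\alpha=\langle\mu-\mu',\mu'\rangle_\alpha+\|\mu'\|_\alpha^2=\|\mu'\|_\alpha^2$, the Pythagorean identity; the separate cone-projection argument is unnecessary. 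One further minor point: you invoke $\mu\in\mathcal E_g(D)$ before having established it. This follows at once from $g<\kappa_\alpha$ on $D\times D$, which bounds $E_g(\mu^\pm)$ and $E_g(\mu^+,\mu^-)$ by their finite $\alpha$-Riesz counterparts, and should be stated before you use the $c_g$-absolute continuity of~$\mu$.
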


\begin{lemma}[{\rm see \cite[Lemma~3.6]{DFHSZ2}}]\label{l-hen'-comp}
Assume that\/ $\mu\in\mathfrak M(D)$ has compact support\/ $S^\mu_D$. Then\/ $\mu\in\mathcal E_g(D)$ if and only if its extension belongs to\/ $\mathcal E_\alpha(\mathbb R^n)$.\end{lemma}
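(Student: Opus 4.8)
The plan is to prove Lemma~\ref{l-hen'-comp} by combining Lemma~\ref{l-hatg} with a local argument that trades on the compactness of the support. Recall that the ``if'' direction is already covered by Lemma~\ref{l-hen'}: if the extension of $\mu$ belongs to $\mathcal E_\alpha(\mathbb R^n)$, then $\mu\in\mathcal E_g(D)$ (this holds without any compactness hypothesis). So the only thing to establish is the ``only if'' direction: assuming $S^\mu_D=:K$ is compact and $\mu\in\mathcal E_g(D)$, we must show that the extension of $\mu$ lies in $\mathcal E_\alpha(\mathbb R^n)$.

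First I would reduce to positive measures. Writing $\mu=\mu^+-\mu^-$, both $\mu^+$ and $\mu^-$ are then positive measures with compact support contained in $K\subset D$, hence bounded, hence extendible, and $\mu\in\mathcal E_g(D)$ forces $\mu^+,\mu^-\in\mathcal E_g^+(D)$ (the energy principle for $g$ gives $E_g(\mu^\pm)\le E_g(|\mu|)<\infty$ via $E_g(\mu)=E_g(\mu^+)-2E_g(\mu^+,\mu^-)+E_g(\mu^-)$ together with finiteness of the cross term — more carefully one uses that $\mathcal E_g(D)$ is defined precisely so that $E_g(\mu^+)$, $E_g(\mu^-)$, $E_g(\mu^+,\mu^-)$ are all finite). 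So it suffices to treat $\mu\in\mathcal E_g^+(D)$ with compact support $K$ and show its extension belongs to $\mathcal E_\alpha^+(\mathbb R^n)$. Now apply Lemma~\ref{l-hatg}: $g\mu=\kappa_\alpha\mu-\kappa_\alpha\mu'$ n.e.\ on $D$. Integrating against $\mu$ (which is legitimate since $\mu$ is $c_\alpha$-absolutely continuous, being of finite $g$-energy, so it does not charge the n.e.-exceptional set) gives
\begin{equation*}
E_g(\mu)=\int g\mu\,d\mu=\int\kappa_\alpha\mu\,d\mu-\int\kappa_\alpha\mu'\,d\mu=E_\alpha(\mu)-E_\alpha(\mu',\mu).
\end{equation*}
Since $\mu$ has compact support $K\subset D$ and $\mu'$ is supported on $D^c$, the kernel $\kappa_\alpha(x,y)=|x-y|^{\alpha-n}$ is bounded on $S^{\mu'}\times K$ away from the diagonal — more precisely $|x-y|\ge\operatorname{dist}(K,D^c)>0$ there — so $\kappa_\alpha\mu'$ is bounded on $K$, whence $E_\alpha(\mu',\mu)=\int\kappa_\alpha\mu'\,d\mu<\infty$ (using also that $\mu'(\mathbb R^n)\le\mu(\mathbb R^n)<\infty$ by the footnote to Theorem~\ref{bal-mass-th}, and $\mu(K)<\infty$). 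Therefore $E_\alpha(\mu)=E_g(\mu)+E_\alpha(\mu',\mu)<\infty$, i.e.\ the extension of $\mu$ lies in $\mathcal E_\alpha^+(\mathbb R^n)$, as desired.

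The step I expect to be the main (though modest) obstacle is justifying the integration of the n.e.-identity \eqref{hatg} against $\mu$ and rigorously bounding $\kappa_\alpha\mu'$ on $K$. For the former, one needs that $\mu$ does not charge sets of $c_\alpha$-capacity zero; this follows because $\mu\in\mathcal E_g^+(D)$ implies $\mu$ is $c_g$-absolutely continuous, and by the footnote to Lemma~\ref{l-hatg} ($c_g(N)=0\iff c_\alpha(N)=0$ for $N\subset D$) this is the same as $c_\alpha$-absolute continuity on $D$; a set of $c_\alpha$-capacity zero meeting $K\subset D$ then has $\mu$-measure zero. For the latter, the crucial input is $\delta:=\operatorname{dist}(K,D^c)>0$, which holds because $K$ is a compact subset of the open set $D$; this gives $\kappa_\alpha\mu'(x)=\int_{D^c}|x-y|^{\alpha-n}\,d\mu'(y)\le\delta^{\alpha-n}\mu'(\mathbb R^n)$ for every $x\in K$, using $\alpha-n<0$. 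One also tacitly needs $\mu'(\mathbb R^n)<\infty$, which is immediate from the quoted inequality $\mu'(\mathbb R^n)\le\mu(\mathbb R^n)$ and the boundedness of the compactly supported $\mu$. With these two points in hand the chain of equalities above is rigorous and the lemma follows.
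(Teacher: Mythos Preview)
Your argument is correct. The paper does not prove this lemma in-text (it cites \cite[Lemma~3.6]{DFHSZ2}), but your approach is essentially the same one the paper uses for the closely related Lemma~\ref{eq-r-g}: exploit the positive distance $\delta=\operatorname{dist}(K,D^c)>0$ to bound the balayage contribution by $\delta^{\alpha-n}$ times a mass term, and conclude $E_\alpha(\mu)\leqslant E_g(\mu)+\text{(finite)}$.

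One small simplification: you route the estimate through the potential identity~\eqref{hatg}, which forces you to justify integrating an n.e.\ relation against $\mu$ via $c_\alpha$-absolute continuity. You can bypass that entirely by working with the \emph{kernel} identity $\kappa_\alpha(x,y)=g(x,y)+\kappa_\alpha\varepsilon_y'(x)$, which holds for \emph{all} $x,y\in D$ by the very definition of $g^\alpha_D$. For $x,y\in K$ one has $\kappa_\alpha\varepsilon_y'(x)\leqslant\delta^{\alpha-n}\varepsilon_y'(\mathbb R^n)\leqslant\delta^{\alpha-n}$, hence
\[
E_\alpha(\mu)=\iint_K\kappa_\alpha\,d\mu\,d\mu=\iint_K g\,d\mu\,d\mu+\iint_K\kappa_\alpha\varepsilon_y'(x)\,d\mu(x)\,d\mu(y)\leqslant E_g(\mu)+\delta^{\alpha-n}\mu(K)^2<\infty,
\]
with all terms nonnegative so no subtraction issues arise. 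This is exactly the computation in the proof of Lemma~\ref{eq-r-g} and avoids the exceptional-set discussion altogether.
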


Lemma~\ref{l-hen'-comp} can be generalized as follows.

\begin{lemma}\label{eq-r-g}Let\/ $A_1$ be a relatively closed subset of\/ $D$ such that
\begin{equation}\label{dist}{\rm dist}\,(A_1,D^c):=\inf_{x\in A_1, \ z\in D^c}\,|x-z|>0.\end{equation}
Then a bounded measure\/ $\mu\in\mathfrak M^+(A_1;D)$ has finite\/ $E_g(\mu)$ if and only if its extension has finite\/ $\alpha$-Riesz energy, and in the affirmative case relation\/ {\rm(\ref{gr})} holds. Furthermore,
$c_g(A_1)<\infty$ if and only if\/ $c_\alpha(A_1)<\infty$.\end{lemma}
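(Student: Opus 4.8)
The plan is to treat the two assertions separately: the energy equivalence is obtained by reducing to the compactly supported case of Lemma~\ref{l-hen'-comp} through a compact exhaustion of $A_1$, and the capacity equivalence by combining an elementary kernel inequality with a scale-by-scale estimate at infinity. Write $\delta:={\rm dist}\,(A_1,D^c)>0$. Two preliminary observations will be used repeatedly: first, $A_1$ is closed in $\mathbb R^n$ — if $x_k\in A_1$ and $x_k\to x$, then ${\rm dist}\,(x,D^c)\geqslant\delta$, so $x\in D$, hence $x\in A_1$ since $A_1$ is relatively closed in $D$; second, for every $\theta\in\mathfrak M^+(A_1;\mathbb R^n)$ one has $\kappa_\alpha\theta(z)\leqslant\delta^{\alpha-n}\theta(\mathbb R^n)$ for all $z\in D^c$, simply because $|x-z|\geqslant\delta$ whenever $x\in A_1$, $z\in D^c$.

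For the energy equivalence, the implication ``the extension of $\mu$ lies in $\mathcal E_\alpha(\mathbb R^n)$ $\Rightarrow$ $\mu\in\mathcal E_g(D)$ together with~(\ref{gr})'' is already furnished by Lemma~\ref{l-hen'} and does not use~(\ref{dist}). For the converse, assume $E_g(\mu)<\infty$. Since $A_1$ is metrizable and countable at infinity, choose compact $K_1\subset K_2\subset\cdots$ with $\bigcup_jK_j=A_1$ and put $\mu_j:=\mu|_{K_j}$; as $g\geqslant0$, $E_g(\mu_j)\leqslant E_g(\mu)<\infty$, so by Lemma~\ref{l-hen'-comp} the extension of $\mu_j$ lies in $\mathcal E_\alpha(\mathbb R^n)$, and Lemma~\ref{l-hen'} gives $\|\mu_j\|_\alpha^2=\|\mu_j\|_g^2+\|\mu_j'\|_\alpha^2$. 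Here~(\ref{dist}) enters only through the uniform bound $\|\mu_j'\|_\alpha^2\leqslant\delta^{\alpha-n}\mu(\mathbb R^n)^2$: indeed $\mu_j'$ is $c_\alpha$-absolutely continuous and concentrated on $D^c$ with $\kappa_\alpha\mu_j'=\kappa_\alpha\mu_j$ n.e.\ there (see~(\ref{bal-eq})), so $\|\mu_j'\|_\alpha^2=\int\kappa_\alpha\mu_j\,d\mu_j'\leqslant\delta^{\alpha-n}\mu_j(\mathbb R^n)\mu_j'(\mathbb R^n)$, while $\mu_j'(\mathbb R^n)\leqslant\mu_j(\mathbb R^n)\leqslant\mu(\mathbb R^n)$ by footnote~\ref{foot-bound}. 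As $\mu_j\uparrow\mu$ setwise and $\kappa_\alpha\geqslant0$, monotone convergence of energies gives $\|\mu_j\|_\alpha^2\uparrow\|\mu\|_\alpha^2$, whence $\|\mu\|_\alpha^2\leqslant E_g(\mu)+\delta^{\alpha-n}\mu(\mathbb R^n)^2<\infty$; so the extension of $\mu$ belongs to $\mathcal E_\alpha(\mathbb R^n)$, and~(\ref{gr}) follows once more from Lemma~\ref{l-hen'}.

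For the capacity equivalence, one direction is elementary and needs neither~(\ref{dist}) nor the first part: since $g\leqslant\kappa_\alpha$ on $D\times D$, every $\nu\in\mathcal E_\alpha^+(A_1,1;\mathbb R^n)$ lies in $\mathcal E_g^+(A_1,1;D)$ with $\|\nu\|_g^2\leqslant\|\nu\|_\alpha^2$, so passing to infima in~(\ref{cap-def}) yields $c_g(A_1)\geqslant c_\alpha(A_1)$, and in particular $c_g(A_1)<\infty\Rightarrow c_\alpha(A_1)<\infty$. For the reverse implication, assume $c_\alpha(A_1)<\infty$; dispose of the trivial cases ($A_1$ compact, so $c_g(A_1)<\infty$ automatically; or $c_\alpha(A_1)=0$, so $c_g(A_1)=0$ by footnote~\ref{RG}). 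For a compact $Q\subset A_1$ and $\mu\in\mathcal E_g^+(Q,1;D)$, integrate the identity $g\mu=\kappa_\alpha\mu-\kappa_\alpha\mu'$ of Lemma~\ref{l-hatg} against $\mu$ — legitimate since $\mu$, being of finite $g$-energy, is $c_g$- and hence $c_\alpha$-absolutely continuous — and use the bound on $\kappa_\alpha\mu'$ on $A_1$ noted above to obtain $E_g(\mu)\geqslant\|\mu\|_\alpha^2-\delta^{\alpha-n}\geqslant w_\alpha(Q)-\delta^{\alpha-n}$; taking the infimum, $w_g(Q)\geqslant w_\alpha(Q)-\delta^{\alpha-n}$ for every compact $Q\subset A_1$.

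The genuine obstacle is that this estimate is vacuous once $w_\alpha(Q)\leqslant\delta^{\alpha-n}$, i.e.\ once $c_\alpha(Q)$ fails to be small — so a diagonal comparison of $g$ and $\kappa_\alpha$ cannot by itself control the capacity of a set stretching to infinity, and one must localize. I would resolve this by invoking that, $A_1$ being closed in $\mathbb R^n$ and of finite $\alpha$-Riesz capacity, the Wiener-type criterion behind Remark~\ref{rem-thin} (\cite[Lemma~5.5]{L}) gives $\sum_k c_\alpha(A_1\cap B_k)<\infty$ for the annuli $B_k=\{q^k\leqslant|x|<q^{k+1}\}$ (and $B_0=\{|x|<q\}$), $q>1$; hence $c_\alpha(\overline{A_1\cap B_k})\to0$, and for all large $k$ the displayed inequality upgrades to $c_g(\overline{A_1\cap B_k})\leqslant2\,c_\alpha(\overline{A_1\cap B_k})$. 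Finally, $g$ satisfies the domination principle of Theorem~\ref{th-dom-pr} (apply it with a positive constant in place of $w$), so $c_g$ is finitely subadditive on compact subsets of $D$; since each compact $K\subset A_1$ meets only finitely many $B_k$, this together with~(\ref{compact}) bounds $c_g(K)$ by $\sum_k c_g(\overline{A_1\cap B_k})$, which is finite — finitely many compact pieces plus $2\sum_k c_\alpha(\overline{A_1\cap B_k})<\infty$ — so $c_g(A_1)=\sup_K c_g(K)<\infty$. The points calling for care are the admissibility of integrating the ``n.e.'' identities against $c_\alpha$-absolutely continuous measures, the monotone-limit statements for energies and capacities along exhaustions, and quoting subadditivity of $c_g$ in precisely the form used; but the real difficulty, as explained, is bridging from the local kernel comparison to a global bound on $c_g(A_1)$, which the Wiener-type criterion supplies.
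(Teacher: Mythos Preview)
Your proof is correct, but it diverges from the paper's in both parts --- and in opposite directions.

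For the energy equivalence the paper is much shorter. From $\varepsilon_y'(\mathbb R^n)\leqslant1$ and $|x-z|\geqslant\delta$ for $x\in A_1$, $z\in D^c$, one gets $\kappa_\alpha\varepsilon_y'(x)\leqslant\delta^{\alpha-n}=:C$ for all $x,y\in A_1$, hence the pointwise kernel bound
\[
\kappa_\alpha(x,y)=g(x,y)+\kappa_\alpha\varepsilon_y'(x)\leqslant g(x,y)+C\quad\text{on }A_1\times A_1,
\]
and integrating against $\mu\otimes\mu$ yields $E_\alpha(\mu)\leqslant E_g(\mu)+C\mu(D)^2$ in one line --- no compact exhaustion, no appeal to Lemma~\ref{l-hen'-comp}, no separate bound on $\|\mu_j'\|_\alpha$. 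Your exhaustion argument is valid and ultimately recovers the same inequality, but it rebuilds through the $\mu_j$ what the paper obtains directly at the kernel level.

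For the capacity implication $c_\alpha(A_1)<\infty\Rightarrow c_g(A_1)<\infty$ the situation reverses. The paper writes only ``Hence, $c_g(A_1)$ is finite if $c_\alpha(A_1)$ is so'' immediately after the mass-$1$ inequality $E_\alpha(\mu)\leqslant E_g(\mu)+C$; but that inequality gives merely $w_g(A_1)\geqslant w_\alpha(A_1)-C$, which --- exactly as you observe --- is vacuous once $w_\alpha(A_1)\leqslant C$, i.e.\ once $c_\alpha(A_1)\geqslant\delta^{n-\alpha}$. Your localization via \cite[Lemma~5.5]{L} (splitting $A_1$ into annular shells whose $\alpha$-capacities tend to $0$, so the comparison $c_g\leqslant2c_\alpha$ becomes effective on the tail) together with subadditivity of $c_g$ (which does follow from the maximum principle of Theorem~\ref{th-dom-pr} via the standard argument in \cite{F1}) genuinely closes this. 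So here your argument is heavier but more complete: the paper's one-line deduction leaves the large-capacity case unaddressed, and your detour through the Wiener-type decomposition is not gratuitous.
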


\begin{proof}Since $\nu^{D^c}(\mathbb R^n)\leqslant\nu(\mathbb R^n)$ for any $\nu\in\mathfrak M^+(\mathbb R^n)$ \cite[Theorem~3.11]{FZ}, we get from~(\ref{dist})
\[\kappa_\alpha\varepsilon_y'(x)=\int|x-z|^{\alpha-n}\,d\varepsilon_y'(z)\leqslant{\rm dist}\,(A_1,D^c)^{\alpha-n}=:C<\infty\text{ \ for all \ }x,y\in A_1,\]
where the constant $C$ is independent of $x,y\in A_1$. Hence
\[\kappa_\alpha(x,y)=g(x,y)+\kappa_\alpha\varepsilon_y'(x)\leqslant g(x,y)+C\text{ \ for all \ }x,y\in A_1,\]
which in turn yields
\begin{equation}\label{est}E_\alpha(\mu)\leqslant E_g(\mu)+C\mu(D)^2\text{ \ for every bounded \ }\mu\in\mathfrak M^+(A_1;D).\end{equation}
Thus $E_g(\mu)<\infty$ implies $E_\alpha(\mu)<\infty$, which according to Lemma~\ref{l-hen'} establishes (\ref{gr}). Since $g(x,y)<\kappa_\alpha(x,y)$ for all $x,y\in D$,\footnote{The strict inequality in this relation is caused by our convention $c_\alpha(D^c)>0$.} the relation $E_\alpha(\mu)<\infty$ implies $E_g(\mu)<\infty$, and the first assertion of the lemma follows.

In particular, we obtain from (\ref{est})
\[E_\alpha(\mu)\leqslant E_g(\mu)+C\text{ \ for every \ }\mu\in\mathfrak M^+(A_1,1;D).\]
Hence, $c_g(A_1)$ is finite if $c_\alpha(A_1)$ is so. As the converse is obvious, the proof is complete.
\end{proof}

\section{A weaker notion of $\alpha$-Riesz energy}\label{weak}

Recall that according to our general convention stated at the beginning of Section~\ref{sec:RG} for the given $\alpha$, $0<\alpha\leqslant 2$, we have $\kappa_\alpha|\mu|\not\equiv\infty$ for any $\mu\in\mathfrak M(\mathbb R^n)$.

As seen from \cite[Lemma~3.1.1]{F1} and the above definition of energy of a (signed) measure, $\mu\in\mathcal E_\alpha(\mathbb R^n)$ if and only if $\mu^\pm\in\mathcal E^+_\alpha(\mathbb R^n)$. For our purposes this standard concept of finite energy is too restrictive (see the Introduction), and we therefore need to consider also the following weaker notion of finite energy, inspired by the Riesz composition identity \cite{R} (see also \cite[Eq.~1.1.3]{L} with both $\alpha$ and $\beta$ replaced by the present $\alpha/2$).

\begin{definition}\label{def-weak} A (signed) measure $\mu\in\mathfrak M(\mathbb R^n)$ is said to have {\it finite weak\/ $\alpha$-Riesz energy} if $\kappa_{\alpha/2}\mu\in L^2(m)$, where $m$ is the $n$-dim\-en\-sional Lebesgue measure on $\mathbb R^n$. In the affirmative case the weak $\alpha$-Riesz energy ${\dot E}_\alpha(\mu)$ of $\mu$ is given by~(\ref{def-weakkk}).
\end{definition}

Note that we do not require that $\kappa_{\alpha/2}|\mu|\in L^2(m)$, for that would mean that $E_\alpha(|\mu|)<\infty$, or equivalently that $E_\alpha(\mu)<\infty$, so the two notions of finite energy would be identical, and that is actually not the case according to Theorem~\ref{thm} below combined with the counterexample in \cite[Appendix]{DFHSZ2}. The class of all signed, resp.\ positive, measures of finite weak ($\alpha$-Riesz) energy is denoted by $\dot{\mathcal E}_\alpha(\mathbb R^n)$, resp.\ $\dot{\mathcal E}_\alpha^+(\mathbb R^n)$.

It follows from the Riesz composition identity and Fubini's theorem that $\mathcal E_\alpha(\mathbb R^n)\subset\dot{\mathcal E}_\alpha(\mathbb R^n)$ and
\begin{equation}\label{dot0}E_\alpha(\mu)={\dot E}_\alpha(\mu)\text{ \ for any \ }\mu\in{\mathcal E}_\alpha(\mathbb R^n)\end{equation}
(cf.\ \cite[Proof of Theorem~1.15]{L}). Furthermore, ${\mathcal E}_\alpha^+(\mathbb R^n)={\dot{\mathcal E}}_\alpha^+(\mathbb R^n)$. In other terms, every signed measure of standard finite energy has the same weak energy, and for positive measures the two concepts of energy are identical. A signed measure $\mu$ may however be of class $\dot{\mathcal E}_\alpha(\mathbb R^n)$ but not of class $\mathcal E_\alpha(\mathbb R^n)$, as noted above.

Clearly, ${\dot{\mathcal E}}_\alpha(\mathbb R^n)$ is a linear subspace of $\mathfrak M(\mathbb R^n)$ and a pre-Hilbert space with the (weak energy) norm $\|\mu\|^{\cdot}_\alpha:=\sqrt{{\dot E}_\alpha(\mu)}=\|\kappa_{\alpha/2}\mu\|_{L^2(m)}$ and the (weak) inner product
\[\langle\mu,\nu\rangle^\cdot_\alpha:=\langle\kappa_{\alpha/2}\mu,\kappa_{\alpha/2}\nu\rangle_{L^2(m)},\text{  \ where \ }\mu,\nu\in{\dot{\mathcal E}}_\alpha(\mathbb R^n).\] In fact, $\|\mu\|_\alpha^\cdot=0$ implies $\kappa_{\alpha/2}\mu=0$ $m$-a.e., and hence $\mu=0$ by \cite[Theorem~1.12]{L}. Moreover,
\begin{equation}\label{dot00}\langle\kappa_{\alpha/2}\mu,\kappa_{\alpha/2}\nu\rangle_{L^2(m)}=\int\kappa_\alpha\mu\,d\nu=\langle\mu,\nu\rangle_\alpha\text{ \ for \ }\mu,\nu\in\mathcal E_\alpha(\mathbb R^n).
\end{equation}
Indeed,
\[\|\kappa_{\alpha/2}(\mu+\nu)\|_{L^2(m)}^2=\|\kappa_{\alpha/2}\mu\|_{L^2(m)}^2+\|\kappa_{\alpha/2}\nu\|_{L^2(m)}^2+
2\langle\kappa_{\alpha/2}\mu,\kappa_{\alpha/2}\nu\rangle_{L^2(m)}.\]
Comparing this with $E_\alpha(\mu+\nu)=E_\alpha(\mu)+E_\alpha(\nu)+2\int\kappa_\alpha\mu\,d\nu$, we get~(\ref{dot00}) from (\ref{dot0}).

Let $\mathcal E_\alpha^b(\mathbb R^n)$ stand for the subspace of $\mathcal E_\alpha(\mathbb R^n)$ consisting of all {\it bounded\/} measures.

\begin{theorem}\label{dense} $\mathcal E_\alpha^b(\mathbb R^n)$ is dense in the pre-Hil\-bert space\/ $\dot{\mathcal E}_\alpha(\mathbb R^n)$ in the topology determined by the weak energy norm\/ $\|\cdot\|^{\cdot}_\alpha$, as well as in the\/ {\rm(}induced\/{\rm)} vague topology.
\end{theorem}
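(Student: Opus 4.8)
The plan is to prove the density statement in two stages: first for positive measures, then for signed measures, and to obtain vague density essentially for free along the way. For a positive measure $\mu\in\dot{\mathcal E}_\alpha^+(\mathbb R^n)={\mathcal E}_\alpha^+(\mathbb R^n)$, I would approximate by the truncations-and-restrictions $\mu_k:=1_{B_k}\cdot\mu$, where $B_k$ is the closed ball of radius $k$ centered at the origin. Each $\mu_k$ is bounded (since $\mu$ is Radon and $B_k$ compact) and lies in ${\mathcal E}_\alpha^+(\mathbb R^n)$ because $0\leqslant\mu_k\leqslant\mu$ forces $E_\alpha(\mu_k)\leqslant E_\alpha(\mu)<\infty$; thus $\mu_k\in\mathcal E_\alpha^b(\mathbb R^n)$. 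Strong convergence $\|\mu-\mu_k\|_\alpha^\cdot=\|\mu-\mu_k\|_\alpha\to0$ follows from the monotone convergence theorem applied to $E_\alpha(\mu-\mu_k)=\iint_{(B_k\times B_k)^c}\kappa_\alpha\,d\mu\,d\mu\to0$, using that $\mu\times\mu$ gives finite mass to $\kappa_\alpha$ over $\mathbb R^n\times\mathbb R^n$. Vague convergence $\mu_k\to\mu$ is immediate since any $\varphi\in C_0(\mathbb R^n)$ has support inside some $B_k$.

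For a signed measure $\mu=\mu^+-\mu^-\in\dot{\mathcal E}_\alpha(\mathbb R^n)$ this simple truncation will \emph{not} work directly, because $\mu^+$ and $\mu^-$ need not individually have finite energy — that is precisely the point of the weak energy notion. Here I would work in $L^2(m)$. By definition $f:=\kappa_{\alpha/2}\mu=\kappa_{\alpha/2}\mu^+-\kappa_{\alpha/2}\mu^-\in L^2(m)$. The idea is to approximate $\mu$ in the norm $\|\cdot\|_\alpha^\cdot$, i.e.\ to approximate $f$ in $L^2(m)$, by potentials $\kappa_{\alpha/2}\nu$ with $\nu\in\mathcal E_\alpha^b(\mathbb R^n)$. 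A natural device is to again restrict to a large ball: set $\mu_k^\pm:=1_{B_k}\cdot\mu^\pm$ and $\mu_k:=\mu_k^+-\mu_k^-$. Since each $\mu_k^\pm$ is bounded and compactly supported, $\mu_k^\pm\in\mathcal E_\alpha^+(\mathbb R^n)$ (boundedness plus compact support forces finite $\alpha$-Riesz energy — e.g.\ by comparison of $\kappa_\alpha$ with a bounded kernel off the diagonal together with the integrability (\ref{1.3.10})), hence $\mu_k\in\mathcal E_\alpha^b(\mathbb R^n)$. It then remains to show $\|\mu-\mu_k\|_\alpha^\cdot\to0$, i.e.\ $\kappa_{\alpha/2}(1_{B_k^c}\cdot\mu^+)-\kappa_{\alpha/2}(1_{B_k^c}\cdot\mu^-)\to0$ in $L^2(m)$. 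Writing $g_k:=\kappa_{\alpha/2}\mu-\kappa_{\alpha/2}\mu_k=\kappa_{\alpha/2}(1_{B_k^c}\cdot\mu)$, one has $g_k(x)\to0$ for $m$-a.e.\ $x$ (for fixed $x$, $1_{B_k^c}(y)\kappa_{\alpha/2}(x,y)\to0$ pointwise, and dominated convergence applies to each of $1_{B_k^c}\cdot\mu^\pm$ since $\kappa_{\alpha/2}\mu^\pm(x)<\infty$ n.e.), and $|g_k|\leqslant|\kappa_{\alpha/2}\mu^+|+|\kappa_{\alpha/2}\mu^-|$. The subtlety is that the majorant $\kappa_{\alpha/2}\mu^++\kappa_{\alpha/2}\mu^-$ need \emph{not} be in $L^2(m)$ (that would be $E_\alpha(|\mu|)<\infty$), so one cannot apply dominated convergence naively.

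The main obstacle is therefore exactly this last $L^2$-convergence step for signed $\mu$: the lack of an $L^2$ majorant. I expect to resolve it by exploiting that $g_k$ is dominated by $|f|$ \emph{plus} the term $\kappa_{\alpha/2}\mu_k$, and more usefully by a splitting argument: fix $\varepsilon>0$; on the compact set $B_R$ (for $R$ large) the $L^2(m)$-mass of $f$ outside $B_R$ is $<\varepsilon$; within $B_R$, once $k$ is large the potential $\kappa_{\alpha/2}\mu_k$ of the far-away mass $1_{B_k^c}\cdot\mu^\pm$ is uniformly small there because $\mathrm{dist}(B_R,B_k^c)\to\infty$ while (\ref{1.3.10}) bounds the relevant mass — giving $\|g_k\|_{L^2(B_R;m)}\to0$ directly; and the tail contribution $\|g_k\|_{L^2(B_R^c;m)}\leqslant\|f\|_{L^2(B_R^c;m)}+\|\kappa_{\alpha/2}\mu_k\|_{L^2(B_R^c;m)}$, where the first term is $<\varepsilon$ and the second is handled since $\mu_k\in\mathcal E_\alpha(\mathbb R^n)$ with $\|\mu_k\|_\alpha=\|\mu_k\|_\alpha^\cdot\leqslant\|\mu\|_\alpha^\cdot+\|g_k\|_{L^2(m)}$ — so after rearranging, a uniform bound on $\|g_k\|_{L^2(m)}$ plus the pointwise a.e.\ convergence and Fatou gives $\limsup_k\|g_k\|_{L^2(m)}\leqslant\varepsilon$ for every $\varepsilon$. (An alternative, perhaps cleaner, route: use that $\mathcal E_\alpha^b(\mathbb R^n)$ is known to be dense in $\mathcal E_\alpha(\mathbb R^n)$ in the strong topology, and reduce the problem to approximating an arbitrary element of $\dot{\mathcal E}_\alpha(\mathbb R^n)$ by elements of $\mathcal E_\alpha(\mathbb R^n)$; but the above direct truncation seems to do both at once.) Finally, vague convergence $\mu_k\to\mu$ holds for the same reason as in the positive case, since $\mathrm{supp}\,\varphi\subset B_k$ for $k$ large makes $\mu_k(\varphi)=\mu(\varphi)$ eventually.
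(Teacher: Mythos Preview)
Your approach has a genuine gap in the signed case. You claim that since each $\mu_k^\pm=1_{B_k}\cdot\mu^\pm$ is bounded and compactly supported, it must have finite $\alpha$-Riesz energy. This is false: boundedness and compact support do \emph{not} force finite Riesz energy (the Dirac mass is the simplest counterexample, but more to the point, the very counterexample from \cite[Appendix]{DFHSZ2} that motivates the weak energy concept furnishes a signed measure $\mu-\mu'\in\dot{\mathcal E}_\alpha(\mathbb R^n)$ of compact support whose positive part $\mu$ has $E_\alpha(\mu)=\infty$; for large $k$ your truncation $\mu_k^+$ equals $\mu$ itself). Consequently your approximants $\mu_k$ need not lie in $\mathcal E_\alpha^b(\mathbb R^n)$ at all, and the subsequent $L^2$ argument---which in any case relies on $\mu_k\in\mathcal E_\alpha(\mathbb R^n)$ in the tail estimate---collapses.

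The paper's proof avoids this by truncating in the \emph{range} of the potential rather than in the \emph{support} of the measure: one defines $\mu_j^\pm$ via $\kappa_{\alpha/2}\mu_j^\pm:=(j\kappa_{\alpha/2}\eta)\wedge\kappa_{\alpha/2}\mu^\pm$, where $\eta$ is the $\kappa_\alpha$-capacitary measure on the unit ball. This guarantees $\kappa_{\alpha/2}\mu_j^\pm\leqslant j\kappa_{\alpha/2}\eta\in L^2(m)$, hence $\mu_j^\pm\in\mathcal E_\alpha^+(\mathbb R^n)$ automatically, and boundedness follows from the positivity-of-mass principle. The crucial dominated-convergence step then works because one can show the pointwise bound $|\kappa_{\alpha/2}\mu_j|\leqslant|\kappa_{\alpha/2}\mu|$ (a case analysis on where the minima are attained), giving an $L^2$ majorant directly from the hypothesis $\kappa_{\alpha/2}\mu\in L^2(m)$. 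Your spatial truncation produces no such inequality, which is why you were forced into the inconclusive splitting-and-rearranging manoeuvre at the end.
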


\begin{proof} Let $\eta$ be the $\kappa_\alpha$-capacitary measure on $\{x\in\mathbb R^n: |x|\leqslant1\}$ (see Remark~\ref{remark}). With any (signed) measure $\mu\in\dot{\mathcal E}_\alpha(\mathbb R^n)$ we associate two sequences $\{\mu^{\pm}_j\}_{j\in\mathbb N}$  characterized by
\[\kappa_{\alpha/2}\mu^{\pm}_j:=(j\kappa_{\alpha/2}\eta)\wedge\kappa_{\alpha/2}\mu^{\pm},\quad j\in\mathbb N,\]
and we next show that $\mu^{\pm}_j\in\mathcal E^b_\alpha(\mathbb R^n)$. According to \cite[Theorem~1.31]{L}, $(j\kappa_{\alpha/2}\eta)\wedge\kappa_{\alpha/2}\mu^{\pm}$ is indeed the $\alpha/2$-Riesz potential of a positive measure $\mu_j^\pm$, which is unique by \cite[Theorem~1.12]{L}. By the principle of positivity of mass \cite[Theorem~3.11]{FZ}, $\mu_j^\pm(\mathbb R^n)\leqslant j\eta(\mathbb R^n)$, and hence $\mu_j^\pm$ is bounded along with $j\eta$. Since $\kappa_{\alpha/2}\mu_j^\pm$ is majorized by $j\kappa_{\alpha/2}\eta$, they are both of the class $L^2(m)$ because $\dot E_\alpha(\eta)=E_\alpha(\eta)<\infty$. Finally, by the Riesz composition identity and Fubini's theorem,
\begin{align}\label{conv}\kappa_\alpha\mu_j^\pm&=\kappa_\alpha\ast\mu_j^\pm=
(\kappa_{\alpha/2}\ast\kappa_{\alpha/2})\ast\mu_j^\pm=\kappa_{\alpha/2}\ast(\kappa_{\alpha/2}\ast\mu_j^\pm)\\
\notag{}&\leqslant j\kappa_{\alpha/2}\ast(\kappa_{\alpha/2}\ast\eta)=j(\kappa_{\alpha/2}\ast\kappa_{\alpha/2})\ast\eta=j\kappa_\alpha\ast\eta=j\kappa_\alpha\eta,
\end{align}
which is finite on $\mathbb R^n$.
Here $\kappa_\alpha$ means the function $x\mapsto|x|^{\alpha-n}$ in the presence of convolution (and similarly with $\alpha/2$ in place of $\alpha$).
Altogether, $\mu_j^\pm\in\dot{\mathcal E}_\alpha^+(\mathbb R^n)={\mathcal E}_\alpha^+(\mathbb R^n)$.
Our next aim is to show that the (signed) measures $\mu_j:=\mu_j^+-\mu_j^-\in\mathcal E_\alpha^b(\mathbb R^n)$ (not a Hahn--Jordan decomposition) approach $\mu$ when $j\to\infty$ both in the norm $\|\cdot\|^\cdot_\alpha$ and vaguely.

Note that the two increasing sequences $\{\kappa_{\alpha/2}\mu_j^\pm\}_{j\in\mathbb N}$ converge pointwise to $\kappa_{\alpha/2}\mu^\pm\not\equiv\infty$.\footnote{For any $\nu\in\mathfrak M(\mathbb R^n)$ we have $\kappa_{\alpha/2}|\nu|\not\equiv\infty$. This follows from our general convention $\kappa_\alpha|\nu|\not\equiv\infty$ with the aid of the Riesz composition identity and Fubini's theorem (see the former line in (\ref{conv}) with $|\nu|$ in place of $\mu_j^\pm$). Thus for any $\nu\in\mathfrak M(\mathbb R^n)$, $\kappa_{\alpha/2}\nu$ is finite $c_{\alpha/2}$-n.e.\ on $\mathbb R^n$, for $\kappa_{\alpha/2}\nu^\pm$ is so.\label{foot1}}
Applying \cite[Theorem~3.9]{L} with $\alpha$ replaced throughout by $\alpha/2$, we thus see that $\mu^\pm_j\to\mu^\pm$ vaguely. Furthermore, $\kappa_{\alpha/2}\mu_j\to\kappa_{\alpha/2}\mu$ pointwise $c_{\alpha/2}$-n.e., hence $m$-a.e., for $m$ is $c_{\alpha/2}$-abs\-ol\-utely continuous. There is dominated  $L^2(m)$-con\-verg\-ence here since it will be shown that
\begin{equation}\label{(a)}|\kappa_{\alpha/2}\mu_j|\leqslant|\kappa_{\alpha/2}\mu|\text{ \ $c_{\alpha/2}$-n.e.\ (and hence $m$-a.e.) on $\mathbb R^n$}\end{equation}
and since $\kappa_{\alpha/2}\mu\in L^2(m)$ by Definition~\ref{def-weak}. This altogether will imply by Lebesgue's integration theorem \cite[Chapter~IV, Section~3, Theorem~6]{B2} that $\kappa_{\alpha/2}\mu_j\to\kappa_{\alpha/2}\mu$ also in $L^2(m)$-norm, that is, $\|\mu_j-\mu\|_\alpha^{\cdot}\to0$ as $j\to\infty$.

The proof is thus reduced to establishing (\ref{(a)}).
When considered first on the set $Q_1:=\{\kappa_{\alpha/2}\mu^+\geqslant\kappa_{\alpha/2}\mu^-\}\cap\{j\kappa_{\alpha/2}\eta\geqslant\kappa_{\alpha/2}\mu^-\}$, we proceed to show that
\begin{equation}\label{dot2}0\leqslant(j\kappa_{\alpha/2}\eta\wedge\kappa_{\alpha/2}\mu^+)-\kappa_{\alpha/2}\mu^-=\kappa_{\alpha/2}\mu_j
\leqslant\kappa_{\alpha/2}\mu^+-\kappa_{\alpha/2}\mu^-=\kappa_{\alpha/2}\mu
\end{equation}
wherever both $\kappa_{\alpha/2}\mu$ and $\kappa_{\alpha/2}\mu_j$ are well defined, i.e.\ $c_{\alpha/2}$-n.e. The former inequality and the latter equality in (\ref{dot2}) obviously both hold $c_{\alpha/2}$-n.e.\ on $Q_1$. The former equality and the latter inequality are obtained by inserting $(j\kappa_{\alpha/2}\eta)\wedge\kappa_{\alpha/2}\mu^+=\kappa_{\alpha/2}\mu_j^+\leqslant\kappa_{\alpha/2}\mu^+$ and  $\kappa_{\alpha/2}\mu^-_j=(j\kappa_{\alpha/2}\eta)\wedge\kappa_{\alpha/2}\mu^-=\kappa_{\alpha/2}\mu^-$. Thus (\ref{dot2}) and hence (\ref{(a)}) hold $c_{\alpha/2}$-n.e.\ on~$Q_1$.

On the next set $Q_2:=\{\kappa_{\alpha/2}\mu^+\geqslant\kappa_{\alpha/2}\mu^-\}\cap\{j\kappa_{\alpha/2}\eta<\kappa_{\alpha/2}\mu^-\}$ we have
$\kappa_{\alpha/2}\mu_j^-=(j\kappa_{\alpha/2}\eta)\wedge\kappa_{\alpha/2}\mu^-=j\kappa_{\alpha/2}\eta$ and furthermore $\kappa_{\alpha/2}\mu_j^+=(j\kappa_{\alpha/2}\eta)\wedge\kappa_{\alpha/2}\mu^+=j\kappa_{\alpha/2}\eta$. Altogether, $0=\kappa_{\alpha/2}\mu_j\leqslant\kappa_{\alpha/2}\mu$ $c_{\alpha/2}$-n.e.\ on~$Q_2$.

On the remaining set $Q_3:=\{\kappa_{\alpha/2}\mu^+<\kappa_{\alpha/2}\mu^-\}$ we have $\kappa_{\alpha/2}(-\mu)^+>\kappa_{\alpha/2}(-\mu)^-$, and the analysis from the preceding two paragraphs applies with $\mu$ replaced by $-\mu\in\dot{\mathcal E}_{\alpha}(\mathbb R^n)$. This leads to $0\geqslant\kappa_{\alpha/2}\mu_j\geqslant\kappa_{\alpha/2}\mu$ $c_{\alpha/2}$-n.e.\ on $Q_3$, and thus altogether to (\ref{(a)}), now verified $c_{\alpha/2}$-n.e.\ throughout $\mathbb R^n$.
\end{proof}

As was shown in Deny \cite{De1} (see also \cite[Chapter~I, Section~1]{L} for a brief survey), the pre-Hilbert space $\mathcal E_\alpha(\mathbb R^n)$ can be isometrically imbedded into its completion, the space $S_\alpha^*$ of real-valued tempered distributions $T\in S^*$ on $\mathbb R^n$ \cite{S} with finite Deny--Schwartz energy
\[\|T\|_{S_\alpha^*}^2=C_{n,\alpha}\int_{\mathbb
R^n}\frac{|\mathcal F[T](\xi)|^2}{|\xi|^\alpha}\,dm(\xi),\]
where $C_{n,\alpha}\in(0,\infty)$ depends on $n$ and $\alpha$ only, and $\mathcal F[T]$ is the Fourier transform of $T\in S^*$. It is shown below that the same holds for $\dot{\mathcal E}_\alpha(\mathbb R^n)$ in place of $\mathcal E_\alpha(\mathbb R^n)$.

\begin{theorem}\label{S-D} $\dot{\mathcal E}_\alpha(\mathbb R^n)$ is isometrically imbedded into its completion, the Hilbert space\/~$S_\alpha^*$.\end{theorem}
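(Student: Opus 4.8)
The plan is to deduce Theorem~\ref{S-D} from Theorem~\ref{dense} together with the already-known fact (Deny~\cite{De1}) that $\mathcal E_\alpha(\mathbb R^n)$ is isometrically imbedded into $S_\alpha^*$ as a dense subspace. First I would observe that by \eqref{dot0}, the inclusion $\mathcal E_\alpha(\mathbb R^n)\subset\dot{\mathcal E}_\alpha(\mathbb R^n)$ is norm-preserving: the weak energy $\dot E_\alpha$ restricted to $\mathcal E_\alpha(\mathbb R^n)$ coincides with the standard energy $E_\alpha$, and hence with the Deny--Schwartz norm $\|\cdot\|_{S_\alpha^*}$. In particular $\mathcal E_\alpha^b(\mathbb R^n)$ carries the same norm whether viewed inside $\dot{\mathcal E}_\alpha(\mathbb R^n)$ or inside $S_\alpha^*$.

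Next I would set up the isometry explicitly. Given $\mu\in\dot{\mathcal E}_\alpha(\mathbb R^n)$, invoke Theorem~\ref{dense} to choose a sequence $\mu_j\in\mathcal E_\alpha^b(\mathbb R^n)$ with $\|\mu_j-\mu\|_\alpha^\cdot\to0$; then $\{\mu_j\}$ is a $\|\cdot\|_\alpha^\cdot$-Cauchy sequence, hence (by the norm identity just noted) also a $\|\cdot\|_{S_\alpha^*}$-Cauchy sequence in the complete space $S_\alpha^*$, so it converges to some $T_\mu\in S_\alpha^*$. One checks in the standard way that $T_\mu$ does not depend on the approximating sequence (if $\mu_j,\mu_j'$ both converge to $\mu$ in $\|\cdot\|_\alpha^\cdot$, interleave them), so $\mu\mapsto T_\mu$ is a well-defined map $\dot{\mathcal E}_\alpha(\mathbb R^n)\to S_\alpha^*$; it is linear and, by passing to the limit in $\|\mu_j\|_\alpha^\cdot=\|\mu_j\|_{S_\alpha^*}$, it is an isometry, so in particular injective. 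It extends the canonical imbedding of $\mathcal E_\alpha(\mathbb R^n)$ (for $\mu\in\mathcal E_\alpha(\mathbb R^n)$ one may take $\mu_j$ to be the truncations used in the proof of Theorem~\ref{dense}, or note that the constant sequence works when $\mu$ is also bounded, and argue by density in general). Finally, density of the image: since $\mathcal E_\alpha^b(\mathbb R^n)$ is already dense in $S_\alpha^*$ (it is dense in $\mathcal E_\alpha(\mathbb R^n)$, e.g.\ by truncation, and $\mathcal E_\alpha(\mathbb R^n)$ is dense in $S_\alpha^*$ by Deny's theorem) and $\mathcal E_\alpha^b(\mathbb R^n)$ lies in the image of $\dot{\mathcal E}_\alpha(\mathbb R^n)$, the image is dense; being also complete would be automatic once we know $S_\alpha^*$ is the completion — here the point is simply that $\dot{\mathcal E}_\alpha(\mathbb R^n)$ sits between the dense subspace $\mathcal E_\alpha(\mathbb R^n)$ and its completion $S_\alpha^*$, isometrically, which is exactly the assertion.

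I expect the only genuinely substantive ingredient to be Theorem~\ref{dense}, which is already proved; everything else is the routine ``a space sandwiched isometrically between a dense subspace and its completion is itself imbedded in that completion'' argument. The one point deserving a little care is checking that the weak-energy inner product on $\dot{\mathcal E}_\alpha(\mathbb R^n)$ is consistent with the $S_\alpha^*$ inner product under the map $\mu\mapsto T_\mu$ — i.e.\ that we get an isometry of pre-Hilbert spaces, not merely of normed spaces — but this follows by polarization from the norm identity plus the fact that both inner products are continuous in their respective norms, so passing to the limit along $\mu_j$ in $\langle\mu_j,\nu_j\rangle_\alpha^\cdot = \langle T_{\mu_j},T_{\nu_j}\rangle_{S_\alpha^*}$ (which on $\mathcal E_\alpha^b(\mathbb R^n)$ reduces to \eqref{dot00} and Deny's identity) gives the claim.
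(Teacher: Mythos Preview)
Your approach is essentially the same as the paper's: approximate $\mu\in\dot{\mathcal E}_\alpha(\mathbb R^n)$ by $\mu_j\in\mathcal E_\alpha^b(\mathbb R^n)$ via Theorem~\ref{dense}, observe that $\{\mu_j\}$ is Cauchy in $S_\alpha^*$ by Deny's isometry, and take the limit. There is, however, one genuine omission. You construct an abstract isometry $\mu\mapsto T_\mu$ into $S_\alpha^*$, but you never verify that $T_\mu$ coincides with $\mu$ itself regarded as a distribution. What the paper actually establishes is the concrete statement~\eqref{D-D}: the measure $\mu$, viewed as a tempered distribution, lies in $S_\alpha^*$ and $\|\mu\|_\alpha^\cdot=\|\mu\|_{S_\alpha^*}$. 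This is what the later applications require (e.g.\ the remark after Theorem~\ref{thm}, where one needs to say that a specific signed measure has finite Deny--Schwartz energy).

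The missing identification is short but not vacuous, and it uses precisely the part of Theorem~\ref{dense} you did not invoke: the \emph{vague} convergence $\mu_j\to\mu$. The paper argues that, on the one hand, $\mu_j\to T$ in $S_\alpha^*$ implies $\mu_j(\varphi)\to T(\varphi)$ for every $\varphi\in C_0^\infty(\mathbb R^n)$ by Deny's \cite[p.~120, Th\'eor\`eme~2]{De1}; on the other hand, $\mu_j\to\mu$ vaguely gives $\mu_j(\varphi)\to\mu(\varphi)$. Hence $T=\mu$ as distributions, so $\mu\in S_\alpha^*$ with the stated norm. Once you add this step your argument matches the paper's.
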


\begin{proof}As seen from what has been recalled just above, it is enough to show that
\begin{equation}\label{D-D}\mu\in S_\alpha^*\text{ \ and \ }\|\mu\|_\alpha^\cdot=\|\mu\|_{S_\alpha^*}\text{ \ for any $\mu\in\dot{\mathcal E}_\alpha(\mathbb R^n)$}.\end{equation}
Given $\mu\in\dot{\mathcal E}_\alpha(\mathbb R^n)$, choose $\mu_j\in\mathcal E_\alpha(\mathbb R^n)$, $j\in\mathbb N$, as in the proof of Theorem~\ref{dense}. Then $\{\mu_j\}_{j\in\mathbb N}$ is a strong Cauchy sequence in $\dot{\mathcal E}_\alpha(\mathbb R^n)$, hence by (\ref{dot0}) also in $\mathcal E_\alpha(\mathbb R^n)$, converging  to $\mu$ both in the weak energy norm $\|\cdot\|^\cdot_\alpha$ and vaguely.
According to \cite[Eq.~6.1.1]{L}, $\{\mu_j\}_{j\in\mathbb N}$ is also Cauchy in $S_\alpha^*$, and hence by \cite[Theorem~6.1]{L} it converges to some $T\in S_\alpha^*$ in the norm $\|\cdot\|_{S_\alpha^*}$. Thus by \cite[p.~120, Th\'eor\`eme~2]{De1}
\[T(\varphi)=\lim_{j\to\infty}\,\mu_j(\varphi)=\mu(\varphi)\text{ \ for every $\varphi\in C^\infty_0(\mathbb R^n)$},\]
and therefore $T$ coincides with $\mu$ treated as a distribution. Since
\[\|\mu_j\|_\alpha=\|\mu_j\|_{S_\alpha^*},\]
letting here $j\to\infty$ establishes (\ref{D-D}).\end{proof}

\begin{remark}If we restrict ourselves to measures of compact support, then relation (\ref{D-D}) does hold for any $\alpha\in(0,n)$ and the following stronger assertion is actually valid.

\begin{theorem}\label{S-D-comp}If\/ $\mu\in\mathfrak M(\mathbb R^n)$ has compact support then for any\/ $\alpha\in(0,n)$ we have\/ $\mu\in S_\alpha^*$ if and only if\/ $\mu\in\dot{\mathcal E}_\alpha(\mathbb R^n)$, and in the affirmative case\/ $\|\mu\|_\alpha^\cdot=\|\mu\|_{S_\alpha^*}$.\end{theorem}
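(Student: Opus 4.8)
The plan is to establish Theorem~\ref{S-D-comp} by separating the two implications and exploiting the fact that a compactly supported measure automatically satisfies the integrability conditions that, for general $\alpha\in(0,n)$, are otherwise problematic. Fix $\mu\in\mathfrak M(\mathbb R^n)$ with compact support $K=S(\mu)$. First I would record the elementary direction: if $\mu\in\dot{\mathcal E}_\alpha(\mathbb R^n)$, i.e.\ $\kappa_{\alpha/2}\mu\in L^2(m)$, then writing $\mu$ as a tempered distribution one computes its Fourier transform and checks directly that $\mathcal F[\mu](\xi)$ belongs to $L^2(|\xi|^{-\alpha}\,dm)$. The cleanest route is via the Riesz composition/convolution identity together with the classical formula $\mathcal F[\kappa_{\alpha/2}](\xi)=c_{n,\alpha/2}|\xi|^{-\alpha/2}$ (valid for $0<\alpha/2<n$, hence for all $\alpha\in(0,n)$), which gives $\mathcal F[\kappa_{\alpha/2}\ast\mu](\xi)=c_{n,\alpha/2}|\xi|^{-\alpha/2}\mathcal F[\mu](\xi)$; Plancherel applied to $\kappa_{\alpha/2}\mu=\kappa_{\alpha/2}\ast\mu\in L^2(m)$ then yields both $\mu\in S_\alpha^*$ and the isometry $\|\mu\|_\alpha^\cdot=\|\mu\|_{S_\alpha^*}$ simultaneously, up to fixing the normalizing constants $C_{n,\alpha}$ and $c_{n,\alpha/2}$ consistently. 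The only subtlety is justifying that $\kappa_{\alpha/2}\ast\mu$ really represents the distribution $\kappa_{\alpha/2}\mu$ and that the convolution theorem applies to this product of a tempered distribution with the locally integrable, slowly growing function $\kappa_{\alpha/2}$; here compact support of $\mu$ makes $\kappa_{\alpha/2}\ast\mu$ a well-defined tempered distribution and removes the growth obstruction.

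Next I would treat the converse: suppose $\mu\in S_\alpha^*$, that is, $\int|\mathcal F[\mu](\xi)|^2|\xi|^{-\alpha}\,dm(\xi)<\infty$. The goal is to deduce $\kappa_{\alpha/2}\mu\in L^2(m)$. Again via the Fourier side, the function $h:=\mathcal F^{-1}\bigl[|\xi|^{-\alpha/2}\mathcal F[\mu]\bigr]$ lies in $L^2(m)$ by Plancherel, and one must identify $h$ (up to the normalizing constant) with $\kappa_{\alpha/2}\ast\mu$, hence with the potential $\kappa_{\alpha/2}\mu$. The identification $h=c_{n,\alpha/2}\,\kappa_{\alpha/2}\ast\mu$ as tempered distributions follows once more from $\mathcal F[\kappa_{\alpha/2}]=c_{n,\alpha/2}|\xi|^{-\alpha/2}$ and the convolution theorem, the compact support of $\mu$ again ensuring everything is a genuine tempered distribution and that $\kappa_{\alpha/2}\ast\mu$ is represented by the pointwise potential $\kappa_{\alpha/2}\mu$ $m$-a.e. (the potential is finite $c_{\alpha/2}$-n.e., hence $m$-a.e., and locally integrable). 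Therefore $\kappa_{\alpha/2}\mu\in L^2(m)$, i.e.\ $\mu\in\dot{\mathcal E}_\alpha(\mathbb R^n)$, and the isometry drops out of the same chain of equalities.

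I expect the main obstacle to be purely the bookkeeping of Fourier-analytic identities for tempered distributions that are measures but not necessarily of finite total variation or finite standard energy: one must be careful that $\mathcal F[\mu]$ is an honest (continuous, polynomially bounded) function — which holds precisely because $\mu$ has compact support — and that the formula $\mathcal F[f\ast\mu]=\mathcal F[f]\,\mathcal F[\mu]$ is legitimate when $f=\kappa_{\alpha/2}$ is the Riesz kernel and $\mu$ is a compactly supported measure. For $\alpha\in(0,n)$ this is standard (see e.g.\ \cite[Chapter~I, Section~1]{L} or \cite{De1,S}), and the remaining nuance is only the matching of the constants $C_{n,\alpha}$ in the definition of $\|\cdot\|_{S_\alpha^*}$ with $c_{n,\alpha/2}^2$ coming from Plancherel; one checks this by testing on a single convenient measure already known to lie in $\mathcal E_\alpha(\mathbb R^n)$, e.g.\ a smooth compactly supported density, for which the identity $\|\mu\|_\alpha=\|\mu\|_{S_\alpha^*}$ of Deny \cite{De1} together with $E_\alpha(\mu)=\dot E_\alpha(\mu)$ from~(\ref{dot0}) fixes the normalization once and for all.
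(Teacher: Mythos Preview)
Your proposal is correct and follows essentially the same route as the paper's proof: both rest on identifying $\kappa_{\alpha/2}\ast\mu$ as a tempered distribution (using compact support of $\mu$), applying the convolution theorem together with $\mathcal F[\kappa_{\alpha/2}](\xi)=\sqrt{C_{n,\alpha}}\,|\xi|^{-\alpha/2}$, and then invoking Plancherel to equate $\|\kappa_{\alpha/2}\mu\|_{L^2(m)}^2$ with $C_{n,\alpha}\int|\mathcal F[\mu](\xi)|^2|\xi|^{-\alpha}\,dm(\xi)$, which handles both implications and the isometry at once. The paper does this in a single stroke rather than separating the two directions, and its normalization is already built into the identity $\mathcal F[\kappa_{\alpha/2}]=\sqrt{C_{n,\alpha}}\,|\xi|^{-\alpha/2}$ (cf.\ \cite[Eq.~1.1.1]{L}), so your extra step of fixing constants by testing on a smooth density is unnecessary.
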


\begin{proof} The function $\kappa_{\alpha/2}(x)=|x|^{\alpha/2-n}$ on $\mathbb R^n$ is locally $m$-integrable, and it therefore equals the absolutely continuous measure $\kappa_{\alpha/2}(x)\,dm(x)$. It follows from \cite[Theorem~0.10]{L} that this measure is a tempered distribution, i.e.\ an element of $S^*$. Again by the quoted theorem, so is the given measure $\mu$ of compact support, viewed as a distribution. Applying \cite[Corollary to Theorem~0.12]{L} to $\kappa_{\alpha/2}$ and $\mu$ from $S^*$, we get
in view of \cite[Eq.~1.1.1]{L} \[\mathcal F[\kappa_{\alpha/2}\ast\mu]=\mathcal F[\kappa_{\alpha/2}]\mathcal F[\mu]=\sqrt{C_{n,\alpha}}\,\frac{\mathcal F[\mu](\xi)}{|\xi|^{\alpha/2}},\text{ \ where\ }\xi\in\mathbb R^n.\]
By Plancherel's theorem the function on the right is square integrable if and only if $\kappa_{\alpha/2}\mu\in L^2(m)$, and then the Lebesgue integrals of the squares are equal.
This completes the proof.\end{proof}
\end{remark}

\section{A relation between the weak $\alpha$-Riesz and the $\alpha$-Green energies}\label{deny}

Returning to Section~\ref{sec:RG} with the domain $D\subset\mathbb R^n$ and its complement $D^c$ with $c_\alpha(D^c)>0$, we denote by $\mu'=\mu^{D^c}$ the $\alpha$-Riesz balayage of an extendible (signed) measure $\mu\in\mathfrak M(D)$ onto $D^c$, and by $g=g^\alpha_D$ the $\alpha$-Green kernel on $D$.

\begin{theorem}\label{thm} Let\/ $\mu\in\mathcal E_g(D)$ be an extendible\/ {\rm(}signed\/{\rm)} Radon measure on\/ $D$. Then\/ $\mu-\mu'$ is a\/ {\rm(}signed\/{\rm)} Radon measure on\/ $\mathbb R^n$ of finite weak\/ $\alpha$-Riesz energy\/ $\dot{E}_\alpha(\mu-\mu')$, and
\begin{equation}\label{2.8} E_g(\mu)=\dot{E}_\alpha(\mu-\mu').
\end{equation}
If\/ $\mu\in\mathcal E_\alpha(\mathbb R^n)$, i.e.\ if the extension of\/ $\mu$ has finite\/ $\alpha$-Riesz energy in the standard sense, then
\[E_g(\mu)=E_\alpha(\mu-\mu')=E_\alpha(\mu)-E_\alpha(\mu').\]
\end{theorem}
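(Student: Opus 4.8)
The plan is to reduce the general case to the well-understood situation of Lemma~\ref{l-hen'} by a truncation-and-approximation argument, using the perfectness of the $\alpha$-Green kernel $g$ recalled in Remark~\ref{rem:clas} together with the density result Theorem~\ref{dense}. First I would treat the case $\mu\in\mathcal E_\alpha(\mathbb R^n)$, which is nothing but Lemma~\ref{l-hen'}: there we already have $\mu-\mu'\in\mathcal E_\alpha(\mathbb R^n)$ and $\|\mu\|_g^2=\|\mu-\mu'\|_\alpha^2=\|\mu\|_\alpha^2-\|\mu'\|_\alpha^2$, and by \eqref{dot0} the standard and weak energies of a measure in $\mathcal E_\alpha(\mathbb R^n)$ agree, so \eqref{2.8} follows in this case. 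This disposes of the last displayed formula of the theorem and leaves the genuinely new assertion: for arbitrary $\mu\in\mathcal E_g(D)$ (extendible, signed, but possibly of infinite standard $\alpha$-Riesz energy), the signed measure $\mu-\mu'$ lies in $\dot{\mathcal E}_\alpha(\mathbb R^n)$ and \eqref{2.8} holds.

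For the general case I would first reduce to $\mu\in\mathcal E_g^+(D)$: writing $\mu=\mu^+-\mu^-$, each $\mu^\pm\in\mathcal E_g^+(D)$ by \cite[Lemma~3.1.1]{F1} applied to the kernel $g$, and $\mu'=(\mu^+)'-(\mu^-)'$ by the definition of balayage of a signed measure; then $\mu-\mu'=(\mu^+-(\mu^+)')-(\mu^--(\mu^-)')$, so if both positive pieces are handled and the corresponding $\kappa_{\alpha/2}$-potentials lie in $L^2(m)$, linearity of the potential map and of $\dot E_\alpha$ (it is a genuine inner-product structure on $\dot{\mathcal E}_\alpha(\mathbb R^n)$, as noted after Definition~\ref{def-weak}) give the claim once we also know $\langle\mu^+-(\mu^+)',\,\mu^--(\mu^-)'\rangle_\alpha^\cdot = E_g(\mu^+,\mu^-)$. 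So the heart of the matter is: for $\nu\in\mathcal E_g^+(D)$ one has $\nu-\nu'\in\dot{\mathcal E}_\alpha(\mathbb R^n)$ with $\dot E_\alpha(\nu-\nu')=E_g(\nu)$; the bilinear version then follows by polarization. Now approximate $\nu$ strongly in $\mathcal E_g^+(D)$ by bounded measures $\nu_j\in\mathcal E_g^+(D)$ of compact support in $D$ — e.g.\ restrict $\nu$ to an exhausting sequence of compact subsets of $D$; this is a strong Cauchy sequence converging to $\nu$ strongly and vaguely, by the perfectness of $g$ (Theorem~\ref{fu-complete}). By Lemma~\ref{l-hen'-comp} each $\nu_j$ has finite standard $\alpha$-Riesz energy, so by Lemma~\ref{l-hen'}, $\nu_j-\nu_j'\in\mathcal E_\alpha(\mathbb R^n)$ with $\|\nu_j-\nu_j'\|_\alpha^2=\|\nu_j\|_g^2$; moreover $\{\nu_j-\nu_j'\}$ is strong Cauchy in $\mathcal E_\alpha(\mathbb R^n)$ because $\|(\nu_j-\nu_j')-(\nu_k-\nu_k')\|_\alpha = \|(\nu_j-\nu_k)-(\nu_j-\nu_k)'\|_\alpha = \|\nu_j-\nu_k\|_g$ by the same lemma applied to $\nu_j-\nu_k$. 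By \eqref{dot0} this is also a weak-energy Cauchy sequence, i.e.\ $\{\kappa_{\alpha/2}(\nu_j-\nu_j')\}$ is Cauchy in $L^2(m)$, hence converges in $L^2(m)$ to some $f$; and $\|f\|_{L^2(m)}^2=\lim\|\nu_j\|_g^2=\|\nu\|_g^2=E_g(\nu)$.

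It remains to identify $f$ with $\kappa_{\alpha/2}(\nu-\nu')$, which I expect to be the main obstacle. The natural route is: $\kappa_{\alpha/2}(\nu_j-\nu_j') = \kappa_{\alpha/2}\nu_j - \kappa_{\alpha/2}\nu_j'$ wherever defined, and I would show $\kappa_{\alpha/2}\nu_j\to\kappa_{\alpha/2}\nu$ and $\kappa_{\alpha/2}\nu_j'\to\kappa_{\alpha/2}\nu'$ pointwise $c_{\alpha/2}$-n.e.\ (hence $m$-a.e.), so that $f=\kappa_{\alpha/2}(\nu-\nu')$ $m$-a.e.\ and then $\nu-\nu'\in\dot{\mathcal E}_\alpha(\mathbb R^n)$ with $\dot E_\alpha(\nu-\nu')=\|f\|_{L^2(m)}^2=E_g(\nu)$. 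For $\nu_j\uparrow\nu$ (choosing the exhaustion so the traces increase) the convergence $\kappa_{\alpha/2}\nu_j\uparrow\kappa_{\alpha/2}\nu$ is monotone by the monotone convergence theorem, and $\kappa_{\alpha/2}\nu\not\equiv\infty$ by footnote~\ref{foot1}. The balayage side is subtler: I would invoke that $\nu_j'\to\nu'$ in the appropriate sense — using that balayage onto $D^c$ is the orthogonal projection onto $\mathcal E_\alpha^+(D^c;\mathbb R^n)$ for measures of finite $\alpha$-Riesz energy, together with the characterization \eqref{bal-eq} of balayage and the domination/maximum principles (Theorem~\ref{th-dom-pr}) — to get $\kappa_{\alpha/2}\nu_j'\to\kappa_{\alpha/2}\nu'$ $c_{\alpha/2}$-n.e. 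Alternatively, and perhaps more cleanly, one can avoid pointwise control of the balayage potentials by passing to a subsequence along which $\kappa_{\alpha/2}(\nu_j-\nu_j')\to f$ $m$-a.e., and separately showing, via \eqref{hatg} and the relation $g\nu_j = \kappa_\alpha\nu_j-\kappa_\alpha\nu_j'$ together with $g\nu_j\uparrow g\nu$ (monotone convergence for the $g$-potential along the increasing exhaustion, valid $c_g$-n.e.\ on $D$), that the distributional limit of $\nu_j-\nu_j'$ is $\nu-\nu'$; then $f$, being the $L^2(m)$-limit and hence the distributional limit of the potentials, must equal $\kappa_{\alpha/2}(\nu-\nu')$ in the distributional and therefore $m$-a.e.\ sense. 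Either way, once $f=\kappa_{\alpha/2}(\nu-\nu')$ is established, \eqref{2.8} for positive $\nu$ and then, by the reduction above and polarization, for signed $\mu$, is immediate.
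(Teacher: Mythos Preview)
Your plan is essentially the paper's own proof: restrict to positive $\mu$, exhaust $D$ by compact sets $K_j$, set $\mu_j=\mu|_{K_j}$, use Lemmas~\ref{l-hen'-comp} and~\ref{l-hen'} to get $\|\mu_j\|_g=\|\mu_j-\mu_j'\|_\alpha$ and the Cauchy property in $L^2(m)$, identify the $L^2$-limit as $\kappa_{\alpha/2}(\mu-\mu')$, then polarize for the signed case and recover the inner-product identity $\langle\mu,\nu\rangle_g=\langle\mu-\mu',\nu-\nu'\rangle_\alpha^\cdot$.

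Two minor points. First, Theorem~\ref{dense} plays no role here; you can drop that from the outline. Second, the identification step on the balayage side is the one place where your sketch is vaguer than the paper. The orthogonal-projection description of balayage (relation~\eqref{proj}) only applies to measures already in $\mathcal E_\alpha^+(\mathbb R^n)$, which $\nu$ itself need not be, so that route does not directly yield $\nu_j'\to\nu'$. The paper instead uses the monotone construction of balayage from \cite[Theorem~3.6]{FZ}: since $\mu_j\uparrow\mu$, the swept measures $\mu_j'$ are increasing and converge vaguely to $\mu'$, hence $\kappa_{\alpha/2}\mu_j'\uparrow\kappa_{\alpha/2}\mu'$ pointwise; combined with $\kappa_{\alpha/2}\mu_j\uparrow\kappa_{\alpha/2}\mu$ this gives $\kappa_{\alpha/2}(\mu_j-\mu_j')\to\kappa_{\alpha/2}(\mu-\mu')$ $c_{\alpha/2}$-n.e.\ (hence $m$-a.e.), after which one matches it with the $m$-a.e.\ limit of a subsequence of the $L^2(m)$-Cauchy sequence. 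That is the clean way to close the gap you flagged.
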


\begin{proof}Assume first that the given $\mu$ is positive.  According to Lemma~\ref{l-hatg}, $g\mu$ is given by (\ref{hatg}). Besides, since $E_g(\mu)<\infty$, $g\mu$ is finite $\mu$-a.e.\ on $D$. Choose an increasing sequence $\{K_j\}_{j\in\mathbb N}$ of compact subsets of $D$ with the union $D$ and write $\mu_j:=\mu|_{K_j}$. Since $S^{\mu_j}_D$ is compact and $E_g(\mu_j)<\infty$, it follows from Lemmas~\ref{l-hen'} and~\ref{l-hen'-comp} and equality (\ref{dot0}) that
\begin{equation}\label{1}\|\mu_j\|_g^2=\|\mu_j-\mu_j'\|_\alpha^2=\int[\kappa_{\alpha/2}(\mu_j-\mu_j')]^2\,dm.
\end{equation}

It is clear that $g\mu_j\uparrow g\mu$ pointwise on $D$, and also that $\|\mu_j\|_g\uparrow\|\mu\|_g<\infty$. The former relation implies that $\langle\mu_j,\mu_p\rangle_g\geqslant\|\mu_p\|^2_g$ for all $j\geqslant p$, and hence
\[\|\mu_j-\mu_p\|^2_g\leqslant\|\mu_j\|^2_g-\|\mu_p\|^2_g,\]
which together with the latter relation proves that $\{\mu_j\}_{j\in\mathbb N}$ is strong Cauchy in $\mathcal E^+_g(D)$. Also noting that $\mu_j\to\mu$ vaguely in $\mathfrak M(D)$, we conclude by the perfectness of $g$ \cite[Theorem~4.11]{FZ} that $\mu_j\to\mu$ in $\mathcal E_g(D)$ strongly (compare with \cite[Proposition~4]{Ca}).

For the proof of (\ref{2.8}) we shall show that $\kappa_{\alpha/2}(\mu-\mu')\in L^2(m)$ and that $\kappa_{\alpha/2}(\mu_j-\mu_j')\to\kappa_{\alpha/2}(\mu-\mu')$ in $L^2(m)$. For $j,k\in\mathbb N$ we obtain similarly as
in (\ref{1})
\[\|\mu_j-\mu_k\|_g^2
  =\|(\mu_j-\mu_j')-(\mu_k-\mu_k')\|_\alpha^2
  =\int[\kappa_{\alpha/2}((\mu_j-\mu_j')-(\mu_k-\mu_k'))]^2\,dm.\]
Since $\|\mu_j-\mu_k\|_g\to0$ as $j,k\to\infty$ this implies that the functions $\kappa_{\alpha/2}(\mu_j-\mu_j')$ form a Cauchy sequence in $L^2(m)$ and hence converge in $L^2(m)$-norm to some $\psi\in L^2(m)$. After passing to a subsequence we may assume that $\kappa_{\alpha/2}(\mu_j-\mu_j')\to\psi$ $m$-a.e.\ on~$\mathbb R^n$.

It follows from the definition of $\mu_j$ that $\kappa_\alpha\mu_j\uparrow\kappa_\alpha\mu$ pointwise on $\mathbb R^n$ and also that the increasing sequence $\{\mu_j\}_{j\in\mathbb N}$ converges to $\mu$ vaguely in $\mathfrak M(\mathbb R^n)$. We therefore see from the proof of \cite[Theorem~3.6]{FZ} that $\{\mu'_j\}_{j\in\mathbb N}$ is likewise increasing and converges vaguely to $\mu'$, which implies that $\kappa_{\alpha/2}\mu_j'\uparrow\kappa_{\alpha/2}\mu'$ pointwise on $\mathbb R^n$. Since
$\kappa_{\alpha/2}\mu_j\uparrow\kappa_{\alpha/2}\mu$ pointwise on $\mathbb R^n$, we thus have $\kappa_{\alpha/2}(\mu_j-\mu_j')\to\kappa_{\alpha/2}(\mu-\mu')$ pointwise $c_{\alpha/2}$-n.e.\ on $\mathbb R^n$, noting that $\kappa_{\alpha/2}\mu$ is finite $c_{\alpha/2}$-n.e.\ on $\mathbb R^n$ (see footnote~\ref{foot1}). Therefore $\psi=\kappa_{\alpha/2}(\mu-\mu')$ $m$-a.e.\ on $\mathbb R^n$, and hence $\mu-\mu'\in{\dot{\mathcal E}}_\alpha(\mathbb R^n)$ and $\kappa_{\alpha/2}(\mu_j-\mu_j')\to\kappa_{\alpha/2}(\mu-\mu')$ in $L^2(m)$.
Letting now $j\to\infty$ in (\ref{1}) and recalling that $E_g(\mu_j)\to E_g(\mu)$ we arrive at~(\ref{2.8}), so far for an extendible $\mu\in\mathcal E_g^+(D)$.

In order to establish (\ref{2.8}) for an extendible (signed) $\mu\in\mathcal E_g(D)$, we next show that
\begin{equation}\label{cor1}\langle\mu,\nu\rangle_g=\langle\mu-\mu',\nu-\nu'\rangle_\alpha^\cdot\text{ \ for any extendible \ }\mu,\nu\in\mathcal E^+_g(D).\end{equation}
Since $\mu+\nu\in\mathcal E_g^+(D)$ is likewise extendible, we obtain from~(\ref{2.8})
\begin{align*}\|\mu+\nu\|_g^2&=\|(\mu+\nu)-(\mu'+\nu')\|_\alpha^{\cdot\,2}\\
&{}=\|\mu-\mu'\|_\alpha^{\cdot\,2}+\|\nu-\nu'\|_\alpha^{\cdot\,2}+2\langle\mu-\mu',\nu-\nu'\rangle_\alpha^\cdot.
\end{align*}
On the other hand,
\begin{align*}\|\mu+\nu\|_g^2&=\|\mu\|_g^2+\|\nu\|_g^2+2\langle\mu,\nu\rangle_g\\
&{}=\|\mu-\mu'\|_\alpha^{\cdot\,2}+\|\nu-\nu'\|_\alpha^{\cdot\,2}+2\langle\mu,\nu\rangle_g.
\end{align*}
Comparing the resulting equations in the above two displays yields (\ref{cor1}). (This can be extended to signed extendible $\mu,\nu\in\mathcal E_g(D)$ by using the bilinearity of an inner product and the linearity of balayage after having inserted $\mu=\mu^+-\mu^-$ and $\nu=\nu^+-\nu^-$ into~(\ref{cor1}).)

If now $\mu\in\mathcal E_g(D)$ is an extendible (signed) measure on $D$, then we obtain from the above
\begin{align*}\|\mu\|_g^2&=\|\mu^+-\mu^-\|_g^2=\|\mu^+\|_g^2+\|\mu^-\|_g^2-2\langle\mu^+,\mu^-\rangle_g\\
&{}=\|\mu^+-(\mu^+)'\|_\alpha^{\cdot\,2}+\|\mu^--(\mu^-)'\|_\alpha^{\cdot\,2}-2\bigl\langle\mu^+-(\mu^+)',\mu^--(\mu^-)'\bigr\rangle_\alpha^\cdot\\
&{}=\|\mu^+-(\mu^+)'-\bigl(\mu^--(\mu^-)'\bigr)\|_\alpha^{\cdot\,2}=\|(\mu^+-\mu^-)-(\mu^+-\mu^-)'\|_\alpha^{\cdot\,2}=\|\mu-\mu'\|_\alpha^{\cdot\,2},
\end{align*}
which shows that for an extendible $\mu\in\mathcal E_g(D)$, $\dot{E}_\alpha(\mu-\mu')$ is finite and (\ref{2.8}) holds.

Under the extra requirement $E_\alpha(\mu)<\infty$ we get the last assertion of the theorem from Lemma~\ref{l-hen'}. However, under the stated assumption $E_g(\mu)<\infty$ this extra requirement is not fulfilled in general, as shown by the example in \cite[Appendix]{DFHSZ2}. Combined with the present theorem, the quoted example also shows that a bounded (signed) measure (here $\mu-\mu'$) may be of class $\dot{\mathcal E}_\alpha(\mathbb R^n)$ but not of class $\mathcal E_\alpha(\mathbb R^n)$, as mentioned in the Introduction.
\end{proof}

\begin{remark}If $\alpha=2$, then  $S_{\mathbb R^n}^{\nu'}=\partial_{\mathbb R^n}D$ for any extendible $\nu\in\mathfrak M^+(D)$ \cite[Proof of Theorem~6.4]{DFHSZ2}. Combined with Theorems~\ref{S-D} and~\ref{thm}, the example in \cite[Appendix]{DFHSZ2} therefore implies that there exists a linear combination of positive measures with infinite standard $\alpha$-Riesz energy and even with compact support, whose energy in $S_\alpha^*$ is finite. This gives an answer in the negative to the question raised by Deny in \cite[p.~125, Remarque]{De1}.\end{remark}

Let $F$ be a relatively closed subset of $D$ with $w_g(F)<\infty$, and let $\mathbb M_g(F,1;D)$ consist of all (minimizing) sequences $\{\nu_j\}_{j\in\mathbb N}\subset\mathcal E^+_g(F,1;D)$ possessing the property
\begin{equation}\label{g-min-seq}\lim_{j\to\infty}\,\|\nu_j\|^2_g=w_g(F)=1/c_g(F).\end{equation}
The class $\mathbb M_g(F,1;D)$ is nonempty by \cite[Lemma~2.3.1]{F1} with $\kappa=g$.

\begin{theorem}\label{th-aux}Assume moreover that\/ $c_g(F)<\infty$ and that\/ $D^c$ is not\/ $\alpha$-thin at infinity.
For any\/ $\{\nu_j\}_{j\in\mathbb N}\in\mathbb M_g(F,1;D)$ then the following two assertions on convergence hold:
\begin{itemize}\item[{\rm(a)}] $\|\nu_j-\lambda_{F,g}\|_g\to0$ as\/ $j\to\infty$, where\/ $\lambda_{F,g}$ is the\/ {\rm(}unique{\rm)} $g$-capacitary measure on\/ $F$ {\rm(}see Remark\/~{\rm\ref{remark}}{\rm)}, and hence\/ $\nu_j\to\lambda_{F,g}$ vaguely in\/ $\mathfrak M^+(D)$.
\item[{\rm(b)}] $\nu_j-\nu_j'\to\lambda_{F,g}-\lambda'_{F,g}$ in the weak energy norm\/ $\|\cdot\|^\cdot_\alpha$.
\end{itemize}
Moreover,  there exists a\/ {\rm(}particular\/{\rm)} minimizing sequence\/ $\{\tilde{\nu}_j\}_{j\in\mathbb N}\in\mathbb M_g(F,1;D)$ such that, in addition to\/ {\rm(a)} and\/ {\rm(b)}, $E_\alpha(\tilde{\nu}_j-\tilde{\nu}_j')<\infty$ and furthermore
\begin{itemize}
\item[{\rm(c)}] $\{\tilde{\nu}_j\}_{j\in\mathbb N}$ and\/ $\{\tilde{\nu}'_j\}_{j\in\mathbb N}$ converge vaguely in\/ $\mathfrak M^+(\mathbb R^n)$ to\/ $\lambda_{F,g}$ and\/ $\lambda'_{F,g}$, respectively, and hence\/ $\tilde{\nu}_j-\tilde{\nu}_j'\to\lambda_{F,g}-\lambda'_{F,g}$ vaguely in\/ $\mathfrak M(\mathbb R^n)$.
\end{itemize}
\end{theorem}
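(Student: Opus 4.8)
The plan is to deduce Theorem~\ref{th-aux} from the perfectness of the $\alpha$-Green kernel $g$ together with Theorem~\ref{thm}. First I would establish (a): since $c_g(F)\in(0,\infty)$ and $g$ is perfect, the capacitary measure $\lambda_{F,g}$ exists and is the unique solution of the minimum energy problem (\ref{cap-def}) with $\kappa=g$ and $Q=F$ (see Remark~\ref{remark}). For any minimizing sequence $\{\nu_j\}\in\mathbb M_g(F,1;D)$ the standard parallelogram/convexity argument applies: writing $\tfrac12(\nu_j+\nu_k)\in\mathcal E^+_g(F,1;D)$ one has $\|\tfrac12(\nu_j+\nu_k)\|_g^2\geqslant w_g(F)$, whence $\|\nu_j-\nu_k\|_g^2\leqslant 2\|\nu_j\|_g^2+2\|\nu_k\|_g^2-4w_g(F)\to0$, so $\{\nu_j\}$ is strong Cauchy in $\mathcal E_g^+(D)$. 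By perfectness the sequence converges strongly to any of its vague cluster points; and by Theorem~\ref{fu-complete} such a cluster point exists (strong completeness of the cone), call it $\lambda$. Since the strong topology is finer than the vague one, $\nu_j\to\lambda$ vaguely as well; lower semicontinuity of energy under vague convergence of positive measures, together with $\|\nu_j\|_g\to\sqrt{w_g(F)}$, forces $\|\lambda\|_g^2\leqslant w_g(F)$, while $\lambda\in\mathfrak M^+(F;D)$ is closed under vague limits (as $F$ is relatively closed in $D$) and $\lambda(F)=1$ by vague convergence together with a suitable tightness/mass argument; hence $\lambda$ solves (\ref{cap-def}) and by uniqueness $\lambda=\lambda_{F,g}$. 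This gives (a).

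For (b) I would invoke Theorem~\ref{thm}: each $\nu_j$ and also $\lambda_{F,g}$ lie in $\mathcal E_g(D)$ and are extendible (being bounded, with $D^c$ not $\alpha$-thin at infinity guaranteeing the relevant mass relations via Theorem~\ref{bal-mass-th}), so by (\ref{cor1}) and bilinearity of the weak inner product, $\|\nu_j-\nu_j'-(\lambda_{F,g}-\lambda'_{F,g})\|_\alpha^{\cdot\,2}=\|(\nu_j-\lambda_{F,g})-(\nu_j-\lambda_{F,g})'\|_\alpha^{\cdot\,2}=\|\nu_j-\lambda_{F,g}\|_g^2\to0$ by part (a). Thus (b) is a direct consequence of the isometry $\|\mu\|_g=\|\mu-\mu'\|_\alpha^\cdot$ of Theorem~\ref{thm} applied to the (signed, extendible) measure $\nu_j-\lambda_{F,g}$.

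For the construction of the particular sequence $\{\tilde\nu_j\}$ with the extra properties, I would start from $\lambda_{F,g}$ itself and exhaust it from inside: pick an increasing sequence of compact sets $K_j\subset F$ with $\bigcup_j K_j$ of full $\lambda_{F,g}$-measure in $F$ (possible since $\lambda_{F,g}$ is a Radon measure and $F$ is relatively closed in $D$, hence countable at infinity), and set $\tilde\nu_j:=\lambda_{F,g}|_{K_j}/\lambda_{F,g}(K_j)$ once $\lambda_{F,g}(K_j)>0$. Each $\tilde\nu_j$ is supported by a compact subset of $D$, so by Lemma~\ref{l-hen'-comp} its extension has finite $\alpha$-Riesz energy, i.e.\ $E_\alpha(\tilde\nu_j-\tilde\nu_j')<\infty$; the argument with $g\mu_j\uparrow g\lambda_{F,g}$ exactly as in the proof of Theorem~\ref{thm} (monotone convergence of the Green potentials plus $\|\tilde\nu_j\|_g\to\|\lambda_{F,g}\|_g$) shows $\{\tilde\nu_j\}$ is strong Cauchy in $\mathcal E_g^+(D)$ with $\|\tilde\nu_j\|_g^2\to w_g(F)$, so $\{\tilde\nu_j\}\in\mathbb M_g(F,1;D)$ and (a), (b) hold for it by the above. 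For (c), $\tilde\nu_j\uparrow\lambda_{F,g}$ vaguely in $\mathfrak M^+(\mathbb R^n)$ by construction, and then—reproducing the balayage monotonicity argument from the proof of Theorem~\ref{thm} (from the proof of \cite[Theorem~3.6]{FZ}, the increasing sequence $\tilde\nu_j$ yields an increasing sequence $\tilde\nu_j'$ converging vaguely to $\lambda'_{F,g}$)—one gets $\tilde\nu_j'\to\lambda'_{F,g}$ vaguely, hence $\tilde\nu_j-\tilde\nu_j'\to\lambda_{F,g}-\lambda'_{F,g}$ vaguely in $\mathfrak M(\mathbb R^n)$.

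The main obstacle I anticipate is verifying in part (a) that the vague cluster point $\lambda$ of a minimizing sequence genuinely has total mass $1$ and is concentrated on $F$: vague convergence does not a priori preserve mass on non-compact $F$, so one must use either the strong convergence (which controls energy, hence inner measures of sets of zero capacity and hence mass distribution via $c_g$-absolute continuity) or a lower-semicontinuity-plus-uniqueness argument to pin down $\lambda=\lambda_{F,g}$. The hypothesis that $D^c$ is not $\alpha$-thin at infinity enters precisely here and in (c), via Theorem~\ref{bal-mass-th}, to ensure the balayage preserves total mass so that $\tilde\nu_j'(\mathbb R^n)=\tilde\nu_j(\mathbb R^n)=1$ and the vague limit of $\{\tilde\nu_j'\}$ is not deficient in mass; without it one could only assert vague convergence to a measure of possibly smaller mass.
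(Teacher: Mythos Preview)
Your arguments for (b) and for the construction of the particular sequence $\{\tilde\nu_j\}$ coincide with the paper's. The difference is in (a), and that is exactly where your acknowledged obstacle lives.

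Your route for (a) is: show $\{\nu_j\}$ is strong Cauchy, pass to a vague cluster point $\lambda$, and then argue that $\lambda\in\mathcal E_g^+(F,1;D)$ with minimal energy, whence $\lambda=\lambda_{F,g}$. The step ``$\lambda(F)=1$'' is asserted but not proved, and for a general minimizing sequence on a noncompact $F$ there is no ready tightness argument---vague limits may lose mass. As written this is a gap.

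The paper sidesteps the difficulty by reversing the order of the argument. It first records (your parallelogram computation in the stronger form) that for \emph{any two} minimizing sequences $\{\nu_j\},\{\mu_j\}\in\mathbb M_g(F,1;D)$ one has $\|\nu_j-\mu_j\|_g\to0$. It then constructs the particular sequence $\tilde\nu_j=\lambda_{F,g}|_{K_j}/\lambda_{F,g}(K_j)$ (exactly as you propose), for which strong and vague convergence to $\lambda_{F,g}$ are read off from the proof of Theorem~\ref{thm}. Only \emph{after} this does it deduce (a) for an arbitrary $\{\nu_j\}$: take $\mu_j=\tilde\nu_j$ in the displayed relation to obtain $\|\nu_j-\lambda_{F,g}\|_g\to0$, and vague convergence in $\mathfrak M^+(D)$ follows from perfectness. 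No mass or support verification for a vague limit is ever required. An equivalent one-line fix for your version: since $\lambda_{F,g}$ exists by Remark~\ref{remark}, the constant sequence $\{\lambda_{F,g}\}$ lies in $\mathbb M_g(F,1;D)$, and your parallelogram inequality applied to $\nu_j$ and $\lambda_{F,g}$ gives $\|\nu_j-\lambda_{F,g}\|_g^2\leqslant 2\|\nu_j\|_g^2-2w_g(F)\to0$ directly.

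A small correction on the role of the hypothesis that $D^c$ is not $\alpha$-thin at infinity: it is \emph{not} used for (a). In the paper it appears (via Theorem~\ref{bal-mass-th}) only in the paragraph on the particular sequence $\{\tilde\nu_j\}$, to record that balayage preserves total mass.
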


\begin{proof} It follows by standard arguments based on the convexity of $\mathcal E^+_g(F,1;D)$ and the pre-Hilbert structure on $\mathcal E_g(D)$ that for any two  $\{\nu_j\}_{j\in\mathbb N}$ and $\{\mu_j\}_{j\in\mathbb N}$ in $\mathbb M_g(F,1;D)$,
\begin{equation}\label{fund}\lim_{j\to\infty}\,\|\nu_j-\mu_j\|_g=0.\end{equation}
In particular, this implies that every such $\{\nu_j\}_{j\in\mathbb N}$ is strong Cauchy in $\mathcal E^+_g(F;D)$.

Define $\lambda_j:=\lambda|_{K_j}$ where $K_j$, $j\in\mathbb N$, are as in the proof of Theorem~\ref{thm} and $\lambda:=\lambda_{F,g}$. As has been shown in the quoted proof,
$\lambda_j\to\lambda$ strongly in $\mathcal E^+_g(D)$, $\lambda_j-\lambda'_j\to\lambda-\lambda'$ in the weak energy norm $\|\cdot\|^\cdot_\alpha$, and furthermore $\lambda_j$ and $\lambda_j'$ converge vaguely in $\mathfrak M^+(\mathbb R^n)$ to $\lambda$ and $\lambda'$, respectively. Moreover, $E_\alpha(\lambda_j)$ is finite, and hence so is $E_\alpha(\lambda_j-\lambda'_j)$. Since $\lambda_j(K_j)\to\lambda(F)=1$ and since, according to Theorem~\ref{bal-mass-th}, $\mu'(\mathbb R^n)=\mu(\mathbb R^n)$ for every bounded $\mu\in\mathfrak M^+(D)$, we thus infer that $\{\tilde{\nu}_j\}_{j\in\mathbb N}$ with $\tilde{\nu}_j:=\lambda_j/\lambda_j(K_j)$ is a particular element of $\mathbb M_g(F,1;D)$ which satisfies all the assertions stated in the theorem.

Now fix any $\{\nu_j\}_{j\in\mathbb N}\in\mathbb M_g(F,1;D)$.
It follows from (\ref{fund}) (with $\tilde{\nu}_j$ in place of $\mu_j$) together with (a) applied to $\{\tilde{\nu}_j\}_{j\in\mathbb N}$ that $\{\nu_j\}_{j\in\mathbb N}$ converges to $\lambda$ strongly in $\mathcal E^+_g(D)$. Since by Theorem~\ref{fu-complete} with $\kappa=g$ the strong topology on $\mathcal E^+_g(D)$ is finer than the induced vague topology, $\nu_j\to\lambda$ also vaguely in $\mathfrak M^+(D)$, which establishes (a). Finally, by Theorem~\ref{thm} each $\nu_j-\nu_j'$, $j\in\mathbb N$, belongs to $\dot{\mathcal E}_\alpha(\mathbb R^n)$ and, furthermore,
\[\|(\nu_j-\nu'_j)-(\lambda-\lambda')\|_\alpha^\cdot=\|(\nu_j-\lambda)-(\nu_j-\lambda)'\|_\alpha^\cdot=\|\nu_j-\lambda\|_g,\]
which in view of (a) implies~(b).
\end{proof}

\section{Applications to generalized condensers}

\subsection{Relevant notions and problems in condenser theory} We first define relevant notions for condensers in $\mathbb R^n$ and formulate the corresponding problems.

\begin{definition}[see \cite{DFHSZ2}]\label{def-g-c} An ordered pair ${\mathbf A}=(A_1,A_2)$ is termed a ({\it generalized\/}) {\it condenser\/} in $\mathbb R^n$ if $A_1$ is a relatively closed subset of a domain $D\subset\mathbb R^n$ and  $A_2=D^c$. The sets $A_1$ and $A_2$ are said to be the {\it positive and negative plates\/}, respectively. A (generalized) condenser $\mathbf A$ is termed {\it standard\/} if $A_1$ is closed in $\mathbb R^n$.\end{definition}

Given a (generalized) condenser ${\mathbf A}=(A_1,A_2)$, let $\mathfrak M({\mathbf A};\mathbb R^n)$ stand for the class of all (signed) Radon measures $\mu\in\mathfrak M(\mathbb R^n)$ such that $\mu^+\in\mathfrak M^+(A_1;\mathbb R^n)$ and $\mu^-\in\mathfrak M^+(A_2;\mathbb R^n)$, and let $\mathfrak M({\mathbf A},{\mathbf 1};\mathbb R^n)$ consist of all $\mu\in\mathfrak M({\mathbf A};\mathbb R^n)$ with $\mu^+(A_1)=\mu^-(A_2)=1$. To avoid trivialities, assume that
\begin{equation}\label{nonzero}c_\alpha(A_i)>0\text{ \ for all \ }i=1,2.\end{equation}
Then $\mathcal E_\alpha({\mathbf A},{\mathbf 1};\mathbb R^n):=\mathcal E_\alpha(\mathbb R^n)\cap\mathfrak M({\mathbf A},{\mathbf 1};\mathbb R^n)$ is not empty \cite[Lemma~2.3.1]{F1}, and hence so is $\dot{\mathcal E}_\alpha({\mathbf A},{\mathbf 1};\mathbb R^n):=\dot{\mathcal E}_\alpha(\mathbb R^n)\cap\mathfrak M({\mathbf A},{\mathbf 1};\mathbb R^n)$. Write
\begin{align*}\dot{\mathcal E}^\circ_\alpha({\mathbf A},{\mathbf 1};\mathbb R^n)&:=\dot{\mathcal E}_\alpha({\mathbf A},{\mathbf 1};\mathbb R^n)\cap C\ell_{\dot{\mathcal E}_\alpha(\mathbb R^n)}\,\mathcal E_\alpha({\mathbf A},{\mathbf 1};\mathbb R^n),\\
\notag w_\alpha(\mathbf A)&:=\inf_{\mu\in\mathcal E_\alpha({\mathbf A},{\mathbf 1};\mathbb R^n)}\,E_\alpha(\mu),\\
\notag {\dot w}_\alpha(\mathbf A)&:=\inf_{\nu\in\dot{\mathcal E}^\circ_\alpha({\mathbf A},{\mathbf 1};\mathbb R^n)}\,{\dot E}_\alpha(\nu).\end{align*}

\begin{definition}\label{def-R-c}$c_\alpha(\mathbf A):=1/w_\alpha(\mathbf A)$, resp.\ ${\dot c}_\alpha(\mathbf A):=1/{\dot w}_\alpha(\mathbf A)$, is said to be the {\it standard\/}, resp.\ the {\it weak\/}, {\it $\alpha$-Riesz capacity\/} of a (generalized) condenser~$\mathbf A$.\end{definition}

Since $w_\alpha(\mathbf A)$ and ${\dot w}_\alpha(\mathbf A)$ are both finite, the following minimum standard, resp.\ weak, $\alpha$-Riesz energy problem makes sense.

\begin{problem}\label{pr-st}Does there exist $\lambda_{{\mathbf A},\alpha}\in\mathcal E_\alpha({\mathbf A},{\mathbf 1};\mathbb R^n)$ with $E_\alpha(\lambda_{{\mathbf A},\alpha})= w_\alpha(\mathbf A)$?\end{problem}

\begin{problem}\label{pr-weak}Does there exist $\dot{\lambda}_{{\mathbf A},\alpha}\in\dot{\mathcal E}^\circ_\alpha({\mathbf A},{\mathbf 1};\mathbb R^n)$ with ${\dot E}_\alpha(\dot{\lambda}_{{\mathbf A},\alpha})={\dot w}_\alpha(\mathbf A)$?\end{problem}

\begin{lemma} A solution to either Problem\/~{\rm\ref{pr-st}} or Problem\/~{\rm\ref{pr-weak}} is unique\/ {\rm(}provided it exists\/{\rm)}.\end{lemma}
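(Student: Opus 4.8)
The plan is to prove uniqueness for both problems simultaneously by exploiting the convexity of the relevant classes together with the pre-Hilbert (or strictly-convex-norm) structure on the ambient energy space. For Problem~\ref{pr-st}, the class $\mathcal E_\alpha({\mathbf A},{\mathbf 1};\mathbb R^n)$ is a convex subset of the pre-Hilbert space $\mathcal E_\alpha(\mathbb R^n)$: indeed if $\mu,\nu\in\mathcal E_\alpha({\mathbf A},{\mathbf 1};\mathbb R^n)$ and $t\in(0,1)$, then $(1-t)\mu+t\nu$ again has positive part concentrated on $A_1$, negative part concentrated on $A_2$ (since $A_1$ and $A_2$ are disjoint, there is no cancellation), and the normalization of each plate is preserved by linearity. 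For Problem~\ref{pr-weak}, the class $\dot{\mathcal E}^\circ_\alpha({\mathbf A},{\mathbf 1};\mathbb R^n)$ is likewise convex: the intersection $\dot{\mathcal E}_\alpha({\mathbf A},{\mathbf 1};\mathbb R^n)$ is convex by the same argument, and it is intersected with the (convex, being a topological closure of a convex set) set $C\ell_{\dot{\mathcal E}_\alpha(\mathbb R^n)}\,\mathcal E_\alpha({\mathbf A},{\mathbf 1};\mathbb R^n)$.

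With convexity established, I would run the standard parallelogram argument. Suppose $\lambda_1,\lambda_2$ are both solutions to Problem~\ref{pr-weak}, so $\dot E_\alpha(\lambda_1)=\dot E_\alpha(\lambda_2)={\dot w}_\alpha(\mathbf A)$. The midpoint $(\lambda_1+\lambda_2)/2$ lies in $\dot{\mathcal E}^\circ_\alpha({\mathbf A},{\mathbf 1};\mathbb R^n)$ by convexity, hence $\|(\lambda_1+\lambda_2)/2\|^{\cdot\,2}_\alpha\geqslant{\dot w}_\alpha(\mathbf A)$. The parallelogram identity in the pre-Hilbert space $\dot{\mathcal E}_\alpha(\mathbb R^n)$ gives
\[
\Bigl\|\frac{\lambda_1-\lambda_2}{2}\Bigr\|^{\cdot\,2}_\alpha=\frac12\|\lambda_1\|^{\cdot\,2}_\alpha+\frac12\|\lambda_2\|^{\cdot\,2}_\alpha-\Bigl\|\frac{\lambda_1+\lambda_2}{2}\Bigr\|^{\cdot\,2}_\alpha\leqslant{\dot w}_\alpha(\mathbf A)-{\dot w}_\alpha(\mathbf A)=0,
\]
so $\|\lambda_1-\lambda_2\|^\cdot_\alpha=0$. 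Since $\|\cdot\|^\cdot_\alpha$ is a genuine norm on $\dot{\mathcal E}_\alpha(\mathbb R^n)$ (as recorded in Section~\ref{weak}, $\|\mu\|^\cdot_\alpha=0$ forces $\kappa_{\alpha/2}\mu=0$ $m$-a.e.\ and hence $\mu=0$ by \cite[Theorem~1.12]{L}), we conclude $\lambda_1=\lambda_2$. The argument for Problem~\ref{pr-st} is identical, using instead the energy norm $\|\cdot\|_\alpha$ on $\mathcal E_\alpha(\mathbb R^n)$, which is a norm because $\kappa_\alpha$ satisfies the energy principle; alternatively one notes that $E_\alpha(\mu)=\dot E_\alpha(\mu)$ on $\mathcal E_\alpha(\mathbb R^n)$ by \eqref{dot0}, so the Problem~\ref{pr-st} case is subsumed in the Problem~\ref{pr-weak} case.

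The only genuinely non-routine point is verifying convexity of the two admissible classes — specifically, checking that when one forms a convex combination $(1-t)\mu+t\nu$ of two admissible signed measures, the Hahn--Jordan positive part really is concentrated on $A_1$ and the negative part on $A_2$, with the correct masses. This rests on the disjointness $A_1\cap A_2=\varnothing$ together with the fact that $A_1\subset D$ is relatively closed and $A_2=D^c$, so $\mu^+$ and $\nu^+$ live on $D$ while $\mu^-$ and $\nu^-$ live on $D^c$; hence $((1-t)\mu+t\nu)^+=(1-t)\mu^++t\nu^+$ and similarly for the negative part, and the mass constraints pass through by linearity of total variation on each plate. For Problem~\ref{pr-weak} one must additionally check that the vague/strong-weak-energy closure $C\ell_{\dot{\mathcal E}_\alpha(\mathbb R^n)}\,\mathcal E_\alpha({\mathbf A},{\mathbf 1};\mathbb R^n)$ is convex, which follows because the closure of a convex set in a topological vector space is convex, $\dot{\mathcal E}_\alpha(\mathbb R^n)$ being a topological vector space in the norm topology. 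Everything else is the textbook strictly-convex-norm uniqueness argument and needs no further comment.
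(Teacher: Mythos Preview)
Your proof is correct and follows exactly the approach the paper indicates: the paper's own proof simply reads ``This can be established by standard methods based on the convexity of $\mathcal E_\alpha({\mathbf A},{\mathbf 1};\mathbb R^n)$, resp.\ $\dot{\mathcal E}^\circ_\alpha({\mathbf A},{\mathbf 1};\mathbb R^n)$, and the pre-Hilbert structure on $\mathcal E_\alpha(\mathbb R^n)$, resp.\ $\dot{\mathcal E}_\alpha(\mathbb R^n)$,'' and you have supplied precisely those standard details (convexity via disjointness of $A_1\subset D$ and $A_2=D^c$, closure of a convex set being convex, and the parallelogram identity combined with strict positive definiteness).
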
 

\begin{proof}This can be established by standard methods based on the convexity of $\mathcal E_\alpha({\mathbf A},{\mathbf 1};\mathbb R^n)$, resp.\ $\dot{\mathcal E}^\circ_\alpha({\mathbf A},{\mathbf 1};\mathbb R^n)$, and the pre-Hilbert structure on $\mathcal E_\alpha(\mathbb R^n)$, resp.\ $\dot{\mathcal E}_\alpha(\mathbb R^n)$.\end{proof}

\begin{remark}\label{r-2}If $\mathbf A$ is a standard condenser with compact $A_i$, $i=1,2$, then the solvability  of Problem~\ref{pr-st} can easily be established by exploiting the vague topology only, since then $\mathfrak M({\mathbf A},{\mathbf 1};\mathbb R^n)$ is vaguely compact, while $E_\alpha(\cdot)$ is va\-guely l.s.c.\ on $\mathcal E_\alpha({\mathbf A};\mathbb R^n):=\mathcal E_\alpha(\mathbb R^n)\cap\mathfrak M({\mathbf A};\mathbb R^n)$ (see e.g.\ \cite[Theorem~2.30]{O}). However, these arguments break down if any of the $A_i$ is noncompact in $\mathbb R^n$, for then $\mathfrak M({\mathbf A},{\mathbf 1};\mathbb R^n)$ is no longer vaguely compact.\end{remark}

\begin{remark}\label{r-3}Let $\mathbf A$ be a standard condenser possessing the property (\ref{dist}).
Under these assumptions, in \cite{ZPot1,ZPot2} an approach has been worked out based on both the vague and the strong topologies on $\mathcal E_\alpha({\mathbf A};\mathbb R^n)$ which made it possible to show that the requirement
\begin{equation}\label{r-suff}c_\alpha(A_i)<\infty\text{ \ for all \ }i=1,2\end{equation}
is sufficient for Problem~\ref{pr-st} to be solvable \cite[Theorem~8.1]{ZPot2}. (Compare with Theorem~\ref{th-st} and Section~\ref{rem-st} below.) However, if assumption (\ref{dist}) is omitted, then the approach developed in \cite{ZPot1,ZPot2} breaks down, and (\ref{r-suff}) no longer guarantees the existence of a solution to Problem~\ref{pr-st}. This has been illustrated by \cite[Theorem~4.6]{DFHSZ1} pertaining to the Newtonian kernel. A solution to the minimum standard $\alpha$-Riesz energy problem nevertheless does exist if we require additionally that the positive and the negative parts of the admissible measures are majorized by properly chosen constraints \cite{DFHSZ1,DFHSZ2}.\end{remark}

\begin{definition}\label{def-m-c}$\mu=\mu_{{\mathbf A},\alpha}\in\mathfrak M({\mathbf A};\mathbb R^n)$ is said to be a {\it measure of a generalized condenser\/} $\mathbf A$ relative to the $\alpha$-Riesz kernel $\kappa_\alpha$ (or briefly a {\it condenser measure\/}) if $\kappa_\alpha\mu$ takes the value $1$ and $0$ $c_\alpha$-n.e.\ on $A_1$ and $A_2$, respectively, and $0\leqslant \kappa_\alpha\mu\leqslant1$ $c_\alpha$-n.e.\ on $\mathbb R^n$.\end{definition}

In the case where $A_1$ and $A_2$ are compact disjoint sets the existence of a condenser measure was established by Kishi~\cite{Ki}, actually even in the general setting of a function kernel on a locally compact Hausdorff space. See also \cite{D3}, \cite{L}, \cite{Bl}, \cite{Berg} where the existence of condenser potentials was analyzed in the framework of Dirichlet spaces.

\subsection{Auxiliary results} Lemmas~\ref{l:dot} and \ref{eq-g-r} below establish relations between $c_\alpha(\mathbf A)$, ${\dot c}_\alpha(\mathbf A)$ and $c_g(A_1)$, where $g=g^\alpha_D$ is the $\alpha$-Green kernel on the domain $D=A_2^c$.

\begin{lemma}\label{l:dot} $c_\alpha(\mathbf A)={\dot c}_\alpha(\mathbf A)$.
\end{lemma}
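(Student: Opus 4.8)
Since $c_\alpha(\mathbf A)=1/w_\alpha(\mathbf A)$ and $\dot c_\alpha(\mathbf A)=1/\dot w_\alpha(\mathbf A)$, it suffices to prove $w_\alpha(\mathbf A)=\dot w_\alpha(\mathbf A)$, i.e.
\[
\inf_{\mu\in\mathcal E_\alpha({\mathbf A},{\mathbf 1};\mathbb R^n)}E_\alpha(\mu)
 =\inf_{\nu\in\dot{\mathcal E}^\circ_\alpha({\mathbf A},{\mathbf 1};\mathbb R^n)}\dot E_\alpha(\nu).
\]
The plan is to compare the two infima directly. For the inequality $\dot w_\alpha(\mathbf A)\leqslant w_\alpha(\mathbf A)$: every $\mu\in\mathcal E_\alpha({\mathbf A},{\mathbf 1};\mathbb R^n)$ lies in $\dot{\mathcal E}_\alpha(\mathbb R^n)$ (by the inclusion $\mathcal E_\alpha(\mathbb R^n)\subset\dot{\mathcal E}_\alpha(\mathbb R^n)$), and it trivially lies in the vague-closure/weak-energy-closure of $\mathcal E_\alpha({\mathbf A},{\mathbf 1};\mathbb R^n)$ since it is already a member; hence $\mu\in\dot{\mathcal E}^\circ_\alpha({\mathbf A},{\mathbf 1};\mathbb R^n)$. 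By~(\ref{dot0}), $\dot E_\alpha(\mu)=E_\alpha(\mu)$, so $\dot w_\alpha(\mathbf A)\leqslant E_\alpha(\mu)$ for every such $\mu$, and taking the infimum gives $\dot w_\alpha(\mathbf A)\leqslant w_\alpha(\mathbf A)$.

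For the reverse inequality $w_\alpha(\mathbf A)\leqslant\dot w_\alpha(\mathbf A)$, I would argue that the definition of $\dot{\mathcal E}^\circ_\alpha({\mathbf A},{\mathbf 1};\mathbb R^n)$ as the intersection of $\dot{\mathcal E}_\alpha({\mathbf A},{\mathbf 1};\mathbb R^n)$ with the $\dot{\mathcal E}_\alpha(\mathbb R^n)$-closure of $\mathcal E_\alpha({\mathbf A},{\mathbf 1};\mathbb R^n)$ forces every admissible $\nu$ to be a weak-energy limit of a sequence $\{\mu_k\}\subset\mathcal E_\alpha({\mathbf A},{\mathbf 1};\mathbb R^n)$. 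Then $\|\mu_k\|^\cdot_\alpha\to\|\nu\|^\cdot_\alpha$, i.e.\ $E_\alpha(\mu_k)=\dot E_\alpha(\mu_k)\to\dot E_\alpha(\nu)$ by~(\ref{dot0}), and since each $E_\alpha(\mu_k)\geqslant w_\alpha(\mathbf A)$ we get $\dot E_\alpha(\nu)\geqslant w_\alpha(\mathbf A)$; taking the infimum over $\nu$ yields $\dot w_\alpha(\mathbf A)\geqslant w_\alpha(\mathbf A)$. Combining the two inequalities gives $w_\alpha(\mathbf A)=\dot w_\alpha(\mathbf A)$, hence the claim.

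The only delicate point is the second step: one must make sure that membership in the closure $C\ell_{\dot{\mathcal E}_\alpha(\mathbb R^n)}\,\mathcal E_\alpha({\mathbf A},{\mathbf 1};\mathbb R^n)$ really does supply an approximating sequence with the \emph{mass normalization} $\mu^+_k(A_1)=\mu^-_k(A_2)=1$ intact — this is guaranteed by the subscript ``${\mathbf 1}$'' inside the closure, so the approximants are themselves admissible competitors for $w_\alpha(\mathbf A)$, not merely nearby measures with slightly wrong masses. Granting that, both inequalities are immediate from~(\ref{dot0}) and the definitions, and no topological compactness or perfectness input is needed here; the lemma is essentially a bookkeeping consequence of the fact that the weak and standard energies agree on $\mathcal E_\alpha(\mathbb R^n)$ together with the choice to build the weak-energy class as the closure of the standard one.
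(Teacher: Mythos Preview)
Your proof is correct and follows essentially the same approach as the paper's: both obtain $\dot w_\alpha(\mathbf A)\leqslant w_\alpha(\mathbf A)$ from the inclusion $\mathcal E_\alpha({\mathbf A},{\mathbf 1};\mathbb R^n)\subset\dot{\mathcal E}^\circ_\alpha({\mathbf A},{\mathbf 1};\mathbb R^n)$ together with~(\ref{dot0}), and obtain the reverse inequality by approximating elements of $\dot{\mathcal E}^\circ_\alpha({\mathbf A},{\mathbf 1};\mathbb R^n)$ in the $\|\cdot\|^\cdot_\alpha$-norm by measures from $\mathcal E_\alpha({\mathbf A},{\mathbf 1};\mathbb R^n)$. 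The only cosmetic difference is that the paper first passes to a minimizing sequence $\{\nu_j\}$ for $\dot w_\alpha(\mathbf A)$ and then chooses approximants $\mu_j$ with $\|\nu_j-\mu_j\|^\cdot_\alpha<j^{-1}$, concluding via the triangle inequality, whereas you show directly that $\dot E_\alpha(\nu)\geqslant w_\alpha(\mathbf A)$ for every admissible $\nu$; the content is the same.
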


\begin{proof}Since $\mathcal E_\alpha({\mathbf A},{\mathbf 1};\mathbb R^n)\subset\dot{\mathcal E}^\circ_\alpha({\mathbf A},{\mathbf 1};\mathbb R^n)$, we have $0\leqslant{\dot w}_\alpha(\mathbf A)\leqslant w_\alpha(\mathbf A)<\infty$ by (\ref{dot0}). Thus it is enough to show that $w_\alpha(\mathbf A)\leqslant{\dot w}_\alpha(\mathbf A)$.
Choose $\{\nu_j\}_{j\in\mathbb N}\subset\dot{\mathcal E}^\circ_\alpha({\mathbf A},{\mathbf 1};\mathbb R^n)$ with
\begin{equation}\label{limdot}\lim_{j\to\infty}\,{\dot E}_\alpha(\nu_j)={\dot w}_\alpha(\mathbf A),\end{equation}
and for every $\nu_j$ choose $\mu_j\in\mathcal E_\alpha({\mathbf A},{\mathbf 1};\mathbb R^n)$ so that $\|\nu_j-\mu_j\|_\alpha^\cdot<j^{-1}$. Applying (\ref{dot0}) to $\mu_j$, we obtain from the triangle inequality in the pre-Hilbert space $\dot{\mathcal E}_\alpha(\mathbb R^n)$
\[w_\alpha(\mathbf A)\leqslant E_\alpha(\mu_j)={\dot E}_\alpha(\mu_j)={\dot E}_\alpha(\mu_j-\nu_j+\nu_j)\leqslant\bigl(j^{-1}+\|\nu_j\|^\cdot_\alpha\bigr)^2.\]
In view of (\ref{limdot}) we complete the proof by letting here $j\to\infty$.
\end{proof}

\begin{lemma}\label{eq-g-r}If\/ $A_2$ is not\/ $\alpha$-thin at infinity, then
\begin{equation*}\label{eq3}c_\alpha(\mathbf A)=c_g(A_1).\end{equation*}
\end{lemma}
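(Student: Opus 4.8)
The plan is to show the two capacities coincide by relating minimizing sequences on $\mathbf A$ to minimizing sequences for the $g$-capacity of $A_1$, using balayage and Theorem~\ref{thm} as the bridge. By Lemma~\ref{l:dot} it suffices to prove $c_\alpha(\mathbf A) = c_g(A_1)$, or equivalently $w_\alpha(\mathbf A) = w_g(A_1)$, where I interpret $w_g(A_1) = 1/c_g(A_1)$ (with both sides possibly $+\infty$; note $c_g(A_1) < \infty \iff c_\alpha(A_1) < \infty$ is already known from the interplay, but in fact here I must handle the general case and let the equality read $\infty = \infty$ when $c_g(A_1)=\infty$).

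First I would set up the correspondence. Given an admissible $\mu \in \mathcal E_\alpha(\mathbf A, \mathbf 1; \mathbb R^n)$, the positive part $\mu^+$ is concentrated on $A_1 \subset D$, hence extendible (being bounded, with total mass $1$), and $\mu^-$ is concentrated on $D^c = A_2$. Since $A_2$ is not $\alpha$-thin at infinity, Theorem~\ref{bal-mass-th} gives $(\mu^+)'(\mathbb R^n) = \mu^+(A_1) = 1$; and by the characterization of balayage (relation~(\ref{bal-eq}) and $c_\alpha$-absolute continuity), $(\mu^+)'$ is the \emph{unique} such measure. The key observation is that for $\nu := \mu^+ \in \mathcal E_g^+(A_1; D)$ (which lies in $\mathcal E_g(D)$ by Lemma~\ref{eq-r-g} is not available without (\ref{dist}), so instead: $\mu \in \mathcal E_\alpha(\mathbb R^n)$ forces $\mu^+ \in \mathcal E_\alpha^+(\mathbb R^n)$, whence by Lemma~\ref{l-hen'} $\mu^+ \in \mathcal E_g(D)$), Theorem~\ref{thm} yields $\dot E_\alpha(\mu^+ - (\mu^+)') = E_g(\mu^+)$. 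This already shows how $\alpha$-Green energy on $A_1$ enters.

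Next I would prove the two inequalities. For $w_\alpha(\mathbf A) \geqslant w_g(A_1)$: take $\mu \in \mathcal E_\alpha(\mathbf A, \mathbf 1; \mathbb R^n)$. Among all signed measures in $\mathcal E_\alpha(\mathbb R^n)$ with positive part $\mu^+$ and negative part concentrated on $D^c = A_2$, the balayage $(\mu^+)'$ minimizes $\|\mu^+ - \theta\|_\alpha$ over $\theta \in \mathcal E_\alpha^+(D^c;\mathbb R^n)$ by the projection property~(\ref{proj}); since $\mu^- \in \mathcal E_\alpha^+(D^c;\mathbb R^n)$ and $\mu^-(A_2) = 1 = (\mu^+)'(\mathbb R^n)$, we get $E_\alpha(\mu) = \|\mu^+ - \mu^-\|_\alpha^2 \geqslant \|\mu^+ - (\mu^+)'\|_\alpha^2 = \dot E_\alpha(\mu^+ - (\mu^+)') = E_g(\mu^+) \geqslant w_g(A_1)$, the last step because $\mu^+ \in \mathcal E_g^+(A_1, 1; D)$. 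Taking the infimum over $\mu$ gives the inequality. For the reverse $w_\alpha(\mathbf A) \leqslant w_g(A_1)$: if $c_g(A_1) = 0$ there is nothing to prove; otherwise take a minimizing sequence $\{\nu_j\} \in \mathbb M_g(A_1, 1; D)$ as in Theorem~\ref{th-aux} (or directly the truncations $\tilde\nu_j$ of the $g$-capacitary measure), so that $E_\alpha(\tilde\nu_j - \tilde\nu_j') < \infty$, $\tilde\nu_j'(\mathbb R^n) = \tilde\nu_j(A_1) = 1$, and $\tilde\nu_j - \tilde\nu_j' \in \mathcal E_\alpha(\mathbf A, \mathbf 1; \mathbb R^n)$. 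Then $w_\alpha(\mathbf A) \leqslant E_\alpha(\tilde\nu_j - \tilde\nu_j') = \dot E_\alpha(\tilde\nu_j - \tilde\nu_j') = E_g(\tilde\nu_j) \to w_g(A_1)$ by Theorems~\ref{thm} and~\ref{th-aux}. Combining the two inequalities with Lemma~\ref{l:dot} completes the proof.

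The main obstacle is the \emph{mass-balance} issue: the identity $E_\alpha(\mu) \geqslant \|\mu^+ - (\mu^+)'\|_\alpha^2$ via the projection property~(\ref{proj}) is only clean when $(\mu^+)'$ has total mass exactly $1$, matching $\mu^-(A_2)$; this is precisely where the hypothesis that $A_2$ is not $\alpha$-thin at infinity is indispensable (Theorem~\ref{bal-mass-th}), otherwise $(\mu^+)'(\mathbb R^n) < 1$ in general and the competing measures no longer have matched masses, so $(\mu^+)'$ need not be an admissible comparison point after renormalization. A secondary point to handle carefully is justifying that $\mu^+ \in \mathcal E_\alpha^+(\mathbb R^n)$ genuinely implies $\mu^+ \in \mathcal E_g^+(A_1;D)$ and that Theorem~\ref{thm}'s identity applies here even though $A_1$ need not satisfy~(\ref{dist}) — this is exactly what Lemma~\ref{l-hen'} and Theorem~\ref{thm} were designed to give, so it goes through, but one must invoke them rather than Lemma~\ref{eq-r-g}.
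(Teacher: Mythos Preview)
Your overall strategy matches the paper's: for $w_\alpha(\mathbf A)\geqslant w_g(A_1)$ you use the projection characterization~(\ref{proj}) of balayage together with Lemma~\ref{l-hen'}, and for the reverse inequality you produce admissible condenser measures of the form $\nu-\nu'$ with $\nu$ compactly supported in $A_1$. The paper does exactly this, except that for $w_g(A_1)\geqslant w_\alpha(\mathbf A)$ it works directly with the $g$-capacitary measures $\lambda_{K_j,g}$ on a compact exhaustion $K_j\uparrow A_1$ (using Lemmas~\ref{l-hen'} and~\ref{l-hen'-comp}), rather than invoking Theorem~\ref{th-aux}.

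There is a genuine gap in your treatment of the reverse inequality. Theorem~\ref{th-aux} has the standing hypothesis $c_g(F)<\infty$, and the ``truncations of the $g$-capacitary measure'' presuppose that $\lambda_{A_1,g}$ exists, which again requires $c_g(A_1)<\infty$. You only dismiss the case $c_g(A_1)=0$ as trivial; the case $c_g(A_1)=\infty$ (i.e.\ $w_g(A_1)=0$) is not covered by your argument, yet must be handled to conclude $w_\alpha(\mathbf A)=0$. The paper's compact-exhaustion argument works uniformly: each $K_j$ has $0<c_g(K_j)<\infty$ regardless of whether $c_g(A_1)$ is finite, and $w_g(K_j)\downarrow w_g(A_1)$ by~(\ref{compact}).

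A smaller point: your diagnosis of the ``main obstacle'' is misplaced. The projection inequality~(\ref{proj}) yields $\|\mu^+-\mu^-\|_\alpha\geqslant\|\mu^+-(\mu^+)'\|_\alpha$ for \emph{any} $\mu^-\in\mathcal E_\alpha^+(D^c;\mathbb R^n)$, with no mass constraint on $(\mu^+)'$; so the direction $w_\alpha(\mathbf A)\geqslant w_g(A_1)$ does not use the $\alpha$-non-thinness of $A_2$ at all. Where $\alpha$-non-thinness is genuinely indispensable is in the \emph{other} direction: one needs $\nu_j'(\mathbb R^n)=\nu_j(\mathbb R^n)=1$ (Theorem~\ref{bal-mass-th}) in order that $\nu_j-\nu_j'$ be admissible in $\mathcal E_\alpha(\mathbf A,\mathbf 1;\mathbb R^n)$. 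You do invoke this correctly in the body of the argument, but the closing commentary has the roles reversed.
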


\begin{proof} We first observe that, by (\ref{nonzero}) and \cite[Lemma~2.6]{DFHSZ}, $c_g(A_1)>0$.
Consider an exhaustion of $A_1$ by compact sets $K_j\uparrow A_1$, $j\in\mathbb N$. Since $c_g(K_j)\uparrow c_g(A_1)$
by (\ref{compact}), there is no loss of generality in assuming $c_g(K_j)>0$, $j\in\mathbb N$. Furthermore, $c_g(K_j)<\infty$ because of the strict positive definiteness of the kernel $g$. In view of the perfectness of $g$, we thus see from Remark~\ref{remark} that there exists a (unique) $g$-capacitary measure $\lambda_j=\lambda_{K_j,g}$ on $K_j$, i.e.\ $\lambda_j\in\mathcal E^+_g(K_j,1;D)$ such that
\[w_g(K_j)=c_g(K_j)^{-1}=\|\lambda_j\|_g^2.\]
According to Lemmas~\ref{l-hen'} and \ref{l-hen'-comp}, $E_\alpha(\lambda_j)$ is finite along $E_g(\lambda_j)$ and
\[\|\lambda_j\|_g^2=\|\lambda_j-\lambda_j'\|_\alpha^2,\]
where $\lambda_j'$ is the $\alpha$-Riesz balayage of $\lambda_j$ onto $A_2$. As $A_2$ is not $\alpha$-thin at infinity, $\lambda_j'(A_2)=\lambda_j(K_j)$ by Theorem~\ref{bal-mass-th}, and hence $\lambda_j-\lambda_j'\in\mathcal E_\alpha({\mathbf A},{\mathbf 1};\mathbb R^n)$. Combined with the preceding two displays, this yields $w_g(K_j)\geqslant w_\alpha(\mathbf A)$,
which leads to $w_g(A_1)\geqslant w_\alpha(\mathbf A)$ when $j\to\infty$.

For the converse inequality, choose $\{\mu_j\}_{j\in\mathbb N}\subset\mathcal E_\alpha({\mathbf A},{\mathbf 1};\mathbb R^n)$ with
\begin{equation}\label{min-seq}\lim_{j\to\infty}\,\|\mu_j\|^2_\alpha=w_\alpha(\mathbf A).\end{equation}
Since the $\alpha$-Riesz balayage $\mu'$ of $\mu\in\mathcal E_\alpha^+(\mathbb R^n)$ onto $A_2$ is in fact the orthogonal projection in the pre-Hilbert space $\mathcal E_\alpha(\mathbb R^n)$ of $\mu$ onto the convex cone $\mathcal E^+_\alpha(A_2;\mathbb R^n)$ (see (\ref{proj})), we get
\[\|\mu_j\|^2_\alpha\geqslant\|\mu_j^+-(\mu_j^+)'\|^2_\alpha=\|\mu_j^+\|^2_g\geqslant w_g(A_1),\]
the equality being valid by Lemma~\ref{l-hen'}. Letting here $j\to\infty$, in view of (\ref{min-seq}) we arrive at $w_g(A_1)\leqslant w_\alpha(\mathbf A)$, thus completing the proof.
\end{proof}

\begin{lemma}\label{l:w:min}Assume that\/ $c_g(A_1)<\infty$ and\/ $A_2$ is not\/ $\alpha$-thin at infinity. Then for any\/ $\{\nu_j\}_{j\in\mathbb N}\in\mathbb M_g(A_1,1;D)$ {\rm(}see Theorem\/~{\rm\ref{th-aux}} with\/ $A_1$ in place of\/ $F${\rm)} we have
\begin{equation}\label{in:dot}\nu_j-\nu_j'\in\dot{\mathcal E}^\circ_\alpha({\mathbf A},{\mathbf 1};\mathbb R^n),\end{equation}
\begin{equation}\label{lim:dot}
\lim_{j\to\infty}\,\|\nu_j-\nu_j'\|_\alpha^{\cdot\,2}=\lim_{j\to\infty}\,\|\nu_j\|_g^2=w_g(A_1)=w_\alpha(\mathbf A)=\dot{w}_\alpha(\mathbf A).
\end{equation}
\end{lemma}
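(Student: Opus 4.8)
The plan is to deduce the lemma from Theorem~\ref{thm} together with the capacity identities in Lemmas~\ref{eq-g-r} and~\ref{l:dot}; the hypothesis that $A_2=D^c$ is not $\alpha$-thin at infinity will enter only through Theorem~\ref{bal-mass-th}, i.e.\ through the fact that $\alpha$-Riesz balayage onto $D^c$ preserves the total mass of bounded measures. Throughout, recall that $c_g(A_1)>0$ by \eqref{nonzero} and \cite[Lemma~2.6]{DFHSZ}, so that $0<w_g(A_1)<\infty$ and all the classes occurring below are nonempty.

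First I would settle the chain of equalities in \eqref{lim:dot}. Fix $\{\nu_j\}_{j\in\mathbb N}\in\mathbb M_g(A_1,1;D)$; each $\nu_j\in\mathcal E_g^+(A_1,1;D)\subset\mathcal E_g(D)$ is bounded, hence extendible, so Theorem~\ref{thm} gives $\nu_j-\nu_j'\in\dot{\mathcal E}_\alpha(\mathbb R^n)$ and $\|\nu_j-\nu_j'\|_\alpha^{\cdot\,2}=E_g(\nu_j)=\|\nu_j\|_g^2$ for every $j$. Letting $j\to\infty$ and invoking the defining property \eqref{g-min-seq} of $\mathbb M_g(A_1,1;D)$ yields $\lim_j\|\nu_j-\nu_j'\|_\alpha^{\cdot\,2}=\lim_j\|\nu_j\|_g^2=w_g(A_1)$, while the remaining equalities $w_g(A_1)=w_\alpha(\mathbf A)=\dot w_\alpha(\mathbf A)$ are exactly Lemmas~\ref{eq-g-r} and~\ref{l:dot}.

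Next, \eqref{in:dot}. Since $\nu_j-\nu_j'\in\dot{\mathcal E}_\alpha(\mathbb R^n)$ by the previous step, it remains, for each fixed $j$, to check membership of $\nu_j-\nu_j'$ both in $\mathfrak M(\mathbf A,\mathbf 1;\mathbb R^n)$ and in $C\ell_{\dot{\mathcal E}_\alpha(\mathbb R^n)}\,\mathcal E_\alpha(\mathbf A,\mathbf 1;\mathbb R^n)$. The first is immediate: $\nu_j$ is concentrated on $A_1\subset D$ while $\nu_j'$ is carried by the disjoint set $A_2=D^c$, so the Hahn--Jordan decomposition of $\nu_j-\nu_j'$ is $(\nu_j-\nu_j')^+=\nu_j$, $(\nu_j-\nu_j')^-=\nu_j'$, and the two masses equal $1$ because $\nu_j(A_1)=1$ and, by Theorem~\ref{bal-mass-th}, $\nu_j'(A_2)=\nu_j'(\mathbb R^n)=\nu_j(\mathbb R^n)=1$. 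For the second, I would fix an exhaustion $K_k\uparrow D$ by compact sets, set $\nu_{j,k}:=\nu_j|_{K_k}$ and, for $k$ large enough that $\nu_j(K_k)>0$, put $\sigma_{j,k}:=\nu_{j,k}/\nu_j(K_k)$. Then $\sigma_{j,k}$ is a unit measure concentrated on the compact set $K_k\cap A_1\subset D$ with finite $E_g(\sigma_{j,k})$, so by Lemma~\ref{l-hen'-comp} its extension lies in $\mathcal E_\alpha(\mathbb R^n)$, whence $\sigma_{j,k}-\sigma_{j,k}'\in\mathcal E_\alpha(\mathbb R^n)$ by Lemma~\ref{l-hen'}; the same disjoint-supports argument together with Theorem~\ref{bal-mass-th} shows in fact $\sigma_{j,k}-\sigma_{j,k}'\in\mathcal E_\alpha(\mathbf A,\mathbf 1;\mathbb R^n)$. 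Finally I would verify $\|(\sigma_{j,k}-\sigma_{j,k}')-(\nu_j-\nu_j')\|_\alpha^\cdot\to0$ as $k\to\infty$: by the proof of Theorem~\ref{thm} applied to the positive measure $\nu_j$ one has $\nu_{j,k}\to\nu_j$ strongly in $\mathcal E_g(D)$ and, passing to the $L^2(m)$-limit inside the energy identity used there, $\|(\nu_{j,k}-\nu_{j,k}')-(\nu_j-\nu_j')\|_\alpha^\cdot=\|\nu_{j,k}-\nu_j\|_g\to0$; since moreover $\nu_j(K_k)\to1$ while $\|\nu_{j,k}-\nu_{j,k}'\|_\alpha^\cdot=\|\nu_{j,k}\|_g$ remains bounded, rescaling by $\nu_j(K_k)$ gives the claimed convergence. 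Combining the three facts yields \eqref{in:dot}.

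I do not anticipate a genuine obstacle, since all the analytic content is already packaged in Section~\ref{weak}, in Theorem~\ref{thm}, and in Lemmas~\ref{eq-g-r} and~\ref{l:dot}. The two points requiring care are: first, the mass conservation of balayage — this is precisely where the hypothesis on $A_2$ is used, both to place $\nu_j-\nu_j'$ and its approximants into the normalized class $\mathfrak M(\mathbf A,\mathbf 1;\mathbb R^n)$; and second, the fact that \eqref{in:dot} asserts membership of \emph{each} $\nu_j-\nu_j'$ in the closed convex class $\dot{\mathcal E}^\circ_\alpha(\mathbf A,\mathbf 1;\mathbb R^n)$, which forces the per-$j$ compact-exhaustion approximation above rather than a direct appeal to Theorem~\ref{th-aux}(b) (the latter only identifies the common limit $\lambda_{A_1,g}-\lambda_{A_1,g}'$).
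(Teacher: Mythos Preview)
Your argument is correct and follows essentially the same route as the paper: you derive \eqref{lim:dot} from Theorem~\ref{thm} together with Lemmas~\ref{l:dot} and~\ref{eq-g-r}, and you establish \eqref{in:dot} by the per-$j$ compact-exhaustion and normalization that the paper invokes by referring to ``the same arguments as in the second paragraph of the proof of Theorem~\ref{th-aux}''. Your explicit verification that $\|(\nu_{j,k}-\nu_{j,k}')-(\nu_j-\nu_j')\|_\alpha^\cdot=\|\nu_{j,k}-\nu_j\|_g$ (via the polarization identity \eqref{cor1} inside the proof of Theorem~\ref{thm}) and the subsequent rescaling step make the paper's compressed citation fully rigorous.
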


\begin{proof} Theorem~\ref{thm} with $\nu_j$ in place of $\mu$ shows that $\nu_j-\nu_j'\in\dot{\mathcal E}_\alpha(\mathbb R^n)$ and
$\|\nu_j\|_g=\|\nu_j-\nu_j'\|_\alpha^\cdot$, which in view of (\ref{g-min-seq}) establishes (\ref{lim:dot}) if combined with Lemmas~\ref{l:dot} and~\ref{eq-g-r}. Applying now to $\nu_j$ the same arguments as in the second paragraph of the proof of Theorem~\ref{th-aux} with $\nu_j$ in place of $\lambda$, we see that $\nu_j-\nu_j'$ can be approximated in the weak energy norm $\|\cdot\|^\cdot_\alpha$ by elements of the class $\mathcal E_\alpha({\mathbf A},{\mathbf 1};\mathbb R^n)$, and (\ref{in:dot}) follows.\end{proof}

Under the hypotheses of Lemma~\ref{l:w:min} we denote by $\dot{\mathbb M}_\alpha({\mathbf A},{\mathbf 1};\mathbb R^n)$  the collection of all sequences $\{\nu_j-\nu_j'\}_{j\in\mathbb N}$ where $\{\nu_j\}_{j\in\mathbb N}$ ranges over $\mathbb M_g(A_1,1;D)$.

\subsection{Main results} Theorem~\ref{th-ex} below provides necessary and sufficient conditions for the solvability of Problem~\ref{pr-weak} and establishes an intimate relationship between $\dot{\lambda}_{{\mathbf A},\alpha}$, a condenser measure $\mu_{{\mathbf A},\alpha}$, and the $g$-capacitary measure on~$A_1$.

\begin{theorem}\label{th-ex}Given a generalized condenser\/ $\mathbf A$, assume that\/ $A_2$ is not\/ $\alpha$-thin at infinity. Then the following three assertions are equivalent:
\begin{itemize}
\item[{\rm (i)}] There exists a\/ {\rm(}unique\/{\rm)} solution\/ $\dot{\lambda}_{{\mathbf A},\alpha}$ to Problem\/~{\rm\ref{pr-weak}}.
\item[{\rm (ii)}] There exists a\/ {\rm(}unique\/{\rm)} bounded\/ $c_\alpha$-absolutely continuous condenser measure\/ $\mu_{{\mathbf A},\alpha}$.
\item[{\rm (iii)}] $c_g(A_1)<\infty$.
\end{itemize}
If any of\/ {\rm(i)}--{\rm(iii)} is valid, then\/ $\dot{\lambda}_{{\mathbf A},\alpha}$ can be written in the form
\begin{equation}\label{repr2}\dot{\lambda}_{{\mathbf A},\alpha}=\lambda_{A_1,g}-\lambda_{A_1,g}',\end{equation}
where\/ $\lambda_{A_1,g}$ is the\/ {\rm(}unique\/{\rm)} $g$-capacitary measure on\/ $A_1$ and\/ $\lambda_{A_1,g}'$ its\/ $\alpha$-Riesz balayage onto\/ $A_2$, while
\begin{equation}\label{repr222}\mu_{{\mathbf A},\alpha}=c_g(A_1)\cdot\dot{\lambda}_{{\mathbf A},\alpha}.\end{equation}
Furthermore,
\begin{equation}\label{eq2}{\dot E}_\alpha(\mu_{{\mathbf A},\alpha})=c_g(A_1)=c_\alpha(\mathbf A)=\dot{c}_\alpha(\mathbf A)<\infty,\end{equation}
and for elements of\/ $\dot{\mathbb M}_\alpha({\mathbf A},{\mathbf 1};\mathbb R^n)$ the following two assertions on convergence hold:
\begin{itemize}
\item[{\rm(a$'$)}] For any\/ $\{\nu_j-\nu_j'\}_{j\in\mathbb N}\in\dot{\mathbb M}_\alpha({\mathbf A},{\mathbf 1};\mathbb R^n)$, $\nu_j-\nu_j'\to\dot{\lambda}_{{\mathbf A},\alpha}$ in the weak energy norm\/~$\|\cdot\|^\cdot_\alpha$.
\item[{\rm(b$'$)}] There exists\/ $\{\tilde{\nu}_j-\tilde{\nu}_j'\}_{j\in\mathbb N}\in\dot{\mathbb M}_\alpha({\mathbf A},{\mathbf 1};\mathbb R^n)$ such that\/ $\tilde{\nu}_j\to\lambda_{A_1,g}$ and\/ $\tilde{\nu}_j'\to\lambda_{A_1,g}'$ vaguely in\/ $\mathfrak M^+(\mathbb R^n)$, and hence\/ $\tilde{\nu}_j-\tilde{\nu}_j'\to\dot{\lambda}_{{\mathbf A},\alpha}$ vaguely in\/ $\mathfrak M(\mathbb R^n)$. In addition,
        \begin{equation}\label{partic}\{\tilde{\nu}_j-\tilde{\nu}_j'\}_{j\in\mathbb N}\subset\mathcal E_\alpha({\mathbf A},{\mathbf 1};\mathbb R^n).\end{equation}
\end{itemize}
\end{theorem}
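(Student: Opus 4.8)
The plan is to run the three equivalences through the scheme \,(iii)$\Rightarrow$(i), (iii)$\Rightarrow$(ii), (i)$\Rightarrow$(iii), (ii)$\Rightarrow$(iii), picking up the representations~(\ref{repr2}) and~(\ref{repr222}) and the identities~(\ref{eq2}) along the way, and then reading off (a$'$) and (b$'$) from Theorem~\ref{th-aux}.

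For (iii)$\Rightarrow$(i) I would fix any $\{\nu_j\}_{j\in\mathbb N}\in\mathbb M_g(A_1,1;D)$ and abbreviate $\lambda:=\lambda_{A_1,g}$. By Lemma~\ref{l:w:min}, each $\nu_j-\nu_j'$ lies in $\dot{\mathcal E}^\circ_\alpha({\mathbf A},{\mathbf 1};\mathbb R^n)$ and $\|\nu_j-\nu_j'\|_\alpha^{\cdot\,2}\to\dot w_\alpha({\mathbf A})$, so $\{\nu_j-\nu_j'\}_{j\in\mathbb N}\in\dot{\mathbb M}_\alpha({\mathbf A},{\mathbf 1};\mathbb R^n)$, while Theorem~\ref{th-aux}(b) with $F=A_1$ gives $\nu_j-\nu_j'\to\lambda-\lambda'$ in the weak energy norm. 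As $\lambda$ is bounded and concentrated on $A_1\subset D$ whereas $\lambda'$ is supported by $A_2=D^c$ with $\lambda'(\mathbb R^n)=\lambda(\mathbb R^n)=1$ by Theorem~\ref{bal-mass-th}, the measure $\lambda-\lambda'$ belongs to $\mathfrak M({\mathbf A},{\mathbf 1};\mathbb R^n)$ and, by Theorem~\ref{thm}, to $\dot{\mathcal E}_\alpha(\mathbb R^n)$; being in addition the weak-energy limit of measures from the closed set $C\ell_{\dot{\mathcal E}_\alpha(\mathbb R^n)}\mathcal E_\alpha({\mathbf A},{\mathbf 1};\mathbb R^n)$, it lies in that set too, hence altogether in $\dot{\mathcal E}^\circ_\alpha({\mathbf A},{\mathbf 1};\mathbb R^n)$. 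Passing to the limit in $\|\nu_j-\nu_j'\|_\alpha^{\cdot\,2}\to\dot w_\alpha({\mathbf A})$ then gives $\dot E_\alpha(\lambda-\lambda')=\dot w_\alpha({\mathbf A})$, so $\dot\lambda_{{\mathbf A},\alpha}:=\lambda-\lambda'$ solves Problem~\ref{pr-weak}; its uniqueness being already known, this proves (i) and~(\ref{repr2}).

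For (iii)$\Rightarrow$(ii) I would put $\mu_{{\mathbf A},\alpha}:=c_g(A_1)\bigl(\lambda_{A_1,g}-\lambda_{A_1,g}'\bigr)$; it is bounded by Theorem~\ref{bal-mass-th}, and $c_\alpha$-absolutely continuous because $\lambda_{A_1,g}\in\mathcal E^+_g(D)$ is $c_g$- hence (footnote~\ref{RG}) $c_\alpha$-absolutely continuous, and the $\alpha$-Riesz balayage of a positive measure is $c_\alpha$-absolutely continuous. By Lemma~\ref{l-hatg}, $\kappa_\alpha\mu_{{\mathbf A},\alpha}=c_g(A_1)\,g\lambda_{A_1,g}$ n.e.\ on $D$, and by~(\ref{bal-eq}), $\kappa_\alpha\mu_{{\mathbf A},\alpha}=0$ n.e.\ on $A_2=D^c$. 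Now the known inequalities $g\lambda_{A_1,g}\geqslant w_g(A_1)$ n.e.\ on $A_1$ and $g\lambda_{A_1,g}\leqslant w_g(A_1)$ on $S(\lambda_{A_1,g})$ (\cite{F1}), combined with the domination principle (Theorem~\ref{th-dom-pr}, applied with $\nu=0$ and the positive $\alpha$-superharmonic constant function $w_g(A_1)$), yield $g\lambda_{A_1,g}=w_g(A_1)$ n.e.\ on $A_1$ and $0\leqslant g\lambda_{A_1,g}\leqslant w_g(A_1)$ on all of $D$; since $c_g(A_1)w_g(A_1)=1$, it follows that $\kappa_\alpha\mu_{{\mathbf A},\alpha}=1$ n.e.\ on $A_1$ and $0\leqslant\kappa_\alpha\mu_{{\mathbf A},\alpha}\leqslant1$ n.e.\ on $\mathbb R^n$, i.e.\ $\mu_{{\mathbf A},\alpha}$ is a condenser measure of the required kind.

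It remains to close the loop. For (i)$\Rightarrow$(iii): if $c_g(A_1)=\infty$ then $\dot w_\alpha({\mathbf A})=0$ by Lemmas~\ref{eq-g-r} and~\ref{l:dot}, so a solution of Problem~\ref{pr-weak} would have vanishing weak energy and hence be the zero measure, which is impossible since its positive part has mass $1$. For (ii)$\Rightarrow$(iii), given a bounded $c_\alpha$-absolutely continuous condenser measure $\mu$, I would observe that $\mu^+\neq0$ (otherwise $\kappa_\alpha\mu\leqslant0$, contradicting $\kappa_\alpha\mu=1$ n.e.\ on $A_1$ with $c_\alpha(A_1)>0$), that $\mu^\pm$ are bounded and $c_\alpha$-absolutely continuous along with $\mu$, and that $\mu^-=(\mu^+)'$ by the uniqueness of balayage recalled after~(\ref{bal-eq}) — indeed $\mu^-$ is supported by the closed set $D^c$ and $\kappa_\alpha\mu^-=\kappa_\alpha\mu^+$ n.e.\ on $D^c$. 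Then $g\mu^+=\kappa_\alpha\mu^+-\kappa_\alpha(\mu^+)'=\kappa_\alpha\mu$ n.e.\ on $D$ by Lemma~\ref{l-hatg}, so $g\mu^+=1$ n.e.\ on $A_1$, and since $\mu^+$ is $c_g$-absolutely continuous and concentrated on $A_1$ we get $E_g(\mu^+)=\int g\mu^+\,d\mu^+=\mu^+(A_1)=:q\in(0,\infty)$, i.e.\ $\mu^+\in\mathcal E^+_g(A_1;D)$. For every $\tau\in\mathcal E^+_g(A_1,1;D)$ this yields $1=\tau(A_1)=\int g\mu^+\,d\tau=\langle\mu^+,\tau\rangle_g\leqslant\|\mu^+\|_g\,\|\tau\|_g=\sqrt q\,\|\tau\|_g$, whence $\|\tau\|_g^2\geqslant1/q$ and $w_g(A_1)\geqslant1/q$; since $\|\mu^+/q\|_g^2=1/q$, equality holds, $q=c_g(A_1)<\infty$ (this is (iii)), and $\mu^+/q$ is the unique $g$-capacitary measure $\lambda_{A_1,g}$, so that $\mu=c_g(A_1)\bigl(\lambda_{A_1,g}-\lambda_{A_1,g}'\bigr)=c_g(A_1)\,\dot\lambda_{{\mathbf A},\alpha}$ — giving~(\ref{repr222}) and the uniqueness in (ii). Finally~(\ref{eq2}) results from Lemmas~\ref{l:dot} and~\ref{eq-g-r} together with $\dot E_\alpha(\mu_{{\mathbf A},\alpha})=c_g(A_1)^2\dot w_\alpha({\mathbf A})=c_g(A_1)$, while (a$'$) and (b$'$) are immediate from Theorem~\ref{th-aux}(b),(c) with $F=A_1$, the representation~(\ref{repr2}), and Theorem~\ref{bal-mass-th}. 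I expect the only genuine obstacle to be the verification in (iii)$\Rightarrow$(ii) that the \emph{pointwise} bounds in Definition~\ref{def-m-c} really hold, which forces one to combine the capacitary-measure inequalities with the domination principle and to keep careful track of the distinction between ``n.e.\ on $\mathbb R^n$'' and ``everywhere on $D$''.
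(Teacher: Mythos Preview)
Your proof is correct and follows essentially the same route as the paper's: the same cycle of implications, the same identification of the condenser measure via Lemma~\ref{l-hatg} and the domination principle, and the same appeal to Theorem~\ref{th-aux} and Lemma~\ref{l:w:min} for (a$'$), (b$'$). The only cosmetic differences are that the paper phrases (iii)$\Rightarrow$(ii) in terms of the $g$-\emph{equilibrium} measure $\gamma_{A_1,g}=c_g(A_1)\lambda_{A_1,g}$ (citing \cite[Theorem~4.12]{FZ} for $g\gamma=1$ n.e.\ on $A_1$) rather than the capacitary measure, and in (ii)$\Rightarrow$(iii) it invokes \cite[Lemma~3.2.2]{F1} where you spell out the Cauchy--Schwarz argument directly.
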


\begin{proof}Assume first that (iii) holds. By \cite[Theorem~4.12]{FZ}, then there exists the (unique) $g$-equilibrium measure on $A_1$, i.e.\ $\gamma=\gamma_{A_1,g}\in\mathcal E^+_g(A_1;D)$ with the properties
\begin{align}\label{totmass}&\|\gamma\|^2_g=\gamma(A_1)=c_g(A_1),\\
\label{pot1}&g\gamma=1\text{ \ $c_g$-n.e.\ on $A_1$},
\end{align}
which is determined uniquely among measures of the class $\mathcal E^+_g(A_1;D)$ by (\ref{pot1}). Since a nonzero positive measure of finite $g$-energy does not charge any set in $D$ with $c_g(\cdot)=0$, (\ref{pot1}) also holds $\gamma$-a.e. Applying Theorem~\ref{th-dom-pr}, we therefore get
\begin{equation}g\gamma\leqslant1\text{ \ on $D$}.\label{pot2}\end{equation}
We assert that $\mu:=\gamma-\gamma'$ serves as a bounded $c_\alpha$-absolutely continuous condenser measure $\mu_{{\mathbf A},\alpha}$.
Being bounded in view of (\ref{totmass}), the measure $\gamma$ is extendible, and Lemma~\ref{l-hatg} yields \begin{equation}\label{pot3}g\gamma=\kappa_\alpha(\gamma-\gamma')=\kappa_\alpha\mu\text{ \ $c_\alpha$-n.e.\ on $D$}.\end{equation}
Being $c_g$-absolutely continuous, $\gamma$ is $c_\alpha$-absolutely continuous (see footnote~\ref{RG}). Thus (\ref{pot1}) holds also $c_\alpha$-n.e., which together  with (\ref{pot3}) implies that $\kappa_\alpha\mu=1$ $c_\alpha$-n.e.\ on $A_1$. The relation $\kappa_\alpha\mu=0$ $c_\alpha$-n.e.\ on $A_2$ follows from (\ref{bal-eq}). We therefore see by combining (\ref{pot2}) with (\ref{pot3}) that $\gamma-\gamma'$ indeed serves as a condenser measure $\mu_{{\mathbf A},\alpha}$. Since the balayage $\gamma'$ is  $c_\alpha$-ab\-sol\-utely continuous and bounded along with $\gamma$ (see Section~\ref{sec:RG}), this $\mu_{{\mathbf A},\alpha}$ justifies (ii), and it is unique according to \cite[p.~178, Remark]{L}. Hence, (iii)$\Rightarrow$(ii).

We shall next show that (ii) implies (iii). It follows from the definition of a condenser measure $\mu_{{\mathbf A},\alpha}=\mu^+-\mu^-$ that $\mu^-$ is in fact the $\alpha$-Riesz balayage of $\mu^+$ onto $A_2$, i.e.
\begin{equation}\label{rep1}\mu_{{\mathbf A},\alpha}=\mu^+-(\mu^+)'.\end{equation}
Treating $\mu^+$ as an extendible measure from $\mathfrak M^+(D)$, we obtain from Lemma~\ref{l-hatg}
\[g\mu^+=\kappa_\alpha\bigl(\mu^+-(\mu^+)'\bigr)=\kappa_\alpha\mu_{{\mathbf A},\alpha}=1\text{ \ $c_\alpha$-n.e.\ on $A_1$}.\]
Integrating this equality with respect to the ($c_\alpha$-absolutely continuous) measure $\mu^+$ gives
\[E_g(\mu^+)=\int g\mu^+\,d\mu^+=\mu^+(A_1)<\infty,\]
the inequality being valid since $\mu_{{\mathbf A},\alpha}$ is by assumption bounded. 
Applying \cite[Lemma~3.2.2]{F1} with $\kappa=g$, we therefore see from the last display that $c_g(A_1)<\infty$, thus completing the proof that (ii) implies (iii).
Furthermore, it follows from the above that $\mu^+$ is in fact the $g$-equilibrium measure $\gamma=\gamma_{A_1,g}$ on $A_1$. Substituting $\mu^+=\gamma_{A_1,g}$ into (\ref{rep1}) implies
\begin{equation}\label{eqq}\mu_{{\mathbf A},\alpha}=\gamma_{A_1,g}-\gamma_{A_1,g}',\end{equation}
which in view of (\ref{2.8}) shows that
\[\dot{E}_\alpha(\mu_{{\mathbf A},\alpha})=E_g(\gamma)<\infty.\]
This establishes (\ref{eq2}) when combined with (\ref{totmass}) and Lemmas~\ref{l:dot} and~\ref{eq-g-r}.

If (i) holds, then $\dot{w}_\alpha(\mathbf A)=\dot{E}_\alpha(\dot{\lambda}_{{\mathbf A},\alpha})>0$ since $\dot{\lambda}_{{\mathbf A},\alpha}\ne0$. Thus $w_g(A_1)>0$ by Lemmas~\ref{l:dot} and~\ref{eq-g-r}, and so (i) implies (iii). For the converse implication, suppose now that (iii) holds. By (\ref{nonzero}) and \cite[Lemma~2.6]{DFHSZ} we have $c_g(A_1)>0$, and hence there exists the (unique) $g$-capacitary measure $\lambda=\lambda_{A_1,g}$ on $A_1$. The trivial sequence $\{\lambda\}_{j\in\mathbb N}$ belongs to $\mathbb M_g(A_1,1;D)$, and therefore we obtain from Lemma~\ref{l:w:min}
\[\lambda-\lambda'\in\dot{\mathcal E}^\circ_\alpha({\mathbf A},{\mathbf 1};\mathbb R^n).\]
Since obviously $\lambda=\gamma/c_g(A_1)$, we also have
\[\dot{E}_\alpha(\lambda-\lambda')=\frac{\dot{E}_\alpha(\gamma-\gamma')}{c_g(A_1)^2}=\frac{E_g(\gamma)}{c_g(A_1)^2}=w_g(A_1)=\dot{w}_\alpha(\mathbf A),\]
where the second equality holds by Theorem~\ref{thm}, the third by (\ref{totmass}), and the fourth by Lemmas~\ref{l:dot} and~\ref{eq-g-r}. Thus $\dot{\lambda}_{{\mathbf A},\alpha}:=\lambda-\lambda'$ is the (unique) solution to Problem~\ref{pr-weak}, and hence (iii) implies (i) with $\dot{\lambda}_{{\mathbf A},\alpha}$ given by (\ref{repr2}). Combining (\ref{repr2}) and (\ref{eqq}) establishes~(\ref{repr222}).

Finally, in view of (\ref{repr2}) we arrive at both (a$'$) and (b$'$) by combining Theorem~\ref{th-aux} and Lemma~\ref{l:w:min}, thus completing the proof of the theorem.
\end{proof}

Under the additional requirement that $A_1$ and $A_2$ are closed subsets of $\mathbb R^n$ with nonzero Euclidean distance, Theorem~\ref{th-ex} can be specified as follows.

\begin{theorem}\label{th-st}Assume that\/ $\mathbf A$ is a standard condenser in\/ $\mathbb R^n$ satisfying\/ {\rm(\ref{dist})}, and let\/ $A_2=D^c$ be not\/ $\alpha$-thin at infinity. Then any of the\/ {\rm(}equivalent\/{\rm)} assertions \mbox{{\rm(i)}--{\rm(iii)}} in Theorem\/~{\rm\ref{th-ex}} is equivalent to either of the following assertions:
\begin{itemize}
\item[{\rm (iv)}] There exists a\/ {\rm(}unique\/{\rm)} solution\/ $\lambda_{{\mathbf A},\alpha}$ to Problem\/~{\rm\ref{pr-st}}.
\item[{\rm (v)}] $c_\alpha(A_1)<\infty$.
\end{itemize}
If any of these\/ {\rm(i)}--{\rm(v)} holds, then the measures\/ $\dot{\lambda}_{{\mathbf A},\alpha}$ and\/ $\mu_{{\mathbf A},\alpha}$ from\/ {\rm(i)} and\/ {\rm(ii)}, respectively, now have finite standard\/ $\alpha$-Riesz energy and moreover
\begin{equation}\label{st-weak-sol}\lambda_{{\mathbf A},\alpha}=\dot{\lambda}_{{\mathbf A},\alpha}.\end{equation}
Furthermore, any\/ $\{\mu_k\}_{k\in\mathbb N}\subset\mathcal E_\alpha({\mathbf A},{\mathbf 1};\mathbb R^n)$ with the property\/ {\rm(\ref{min-seq})}
converges to\/ $\lambda_{{\mathbf A},\alpha}$ both strongly and vaguely in\/~$\mathcal E_\alpha(\mathbb R^n)$.
\end{theorem}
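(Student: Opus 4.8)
The plan is to deduce Theorem~\ref{th-st} from Theorem~\ref{th-ex} together with Lemmas~\ref{eq-r-g} and~\ref{eq-g-r}, exploiting that the hypothesis $\mathrm{dist}(A_1,A_2)>0$ allows one to pass back and forth between $g$-energy on $D=A_2^c$ and $\alpha$-Riesz energy on $\mathbb R^n$ without loss. First I would record that (iii)$\Leftrightarrow$(v): this is exactly the last assertion of Lemma~\ref{eq-r-g}, whose proof already invokes~(\ref{dist}). Since $A_2$ is not $\alpha$-thin at infinity, Theorem~\ref{th-ex} is available, so (i)--(iii) and (v) are all equivalent once (iii)$\Leftrightarrow$(iv) is established, and the representations~(\ref{repr2})--(\ref{eq2}) are in force.

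For (iii)$\Rightarrow$(iv) I would produce the minimizer explicitly. Assuming $c_g(A_1)<\infty$ (and $c_g(A_1)>0$ by~(\ref{nonzero}) and \cite[Lemma~2.6]{DFHSZ}), let $\lambda:=\lambda_{A_1,g}$ be the $g$-capacitary measure on $A_1$, so $\lambda\in\mathcal E^+_g(A_1,1;D)$ and $\|\lambda\|_g^2=w_g(A_1)<\infty$. Being bounded, $\lambda$ is extendible; since $A_1$ is relatively closed in $D$ and $\mathrm{dist}(A_1,A_2)>0$, Lemma~\ref{eq-r-g} gives that its extension has finite $\alpha$-Riesz energy and that~(\ref{gr}) holds. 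As $\lambda$ and its $\alpha$-Riesz balayage $\lambda'$ (onto $A_2$) have disjoint supports, and $\lambda'(A_2)=\lambda'(\mathbb R^n)=\lambda(\mathbb R^n)=1$ by Theorem~\ref{bal-mass-th}, we get $\lambda-\lambda'\in\mathcal E_\alpha(\mathbf A,\mathbf 1;\mathbb R^n)$ with $E_\alpha(\lambda-\lambda')=\|\lambda\|_g^2=w_g(A_1)=w_\alpha(\mathbf A)$, the last equality by Lemma~\ref{eq-g-r}. Hence $\lambda-\lambda'$ solves Problem~\ref{pr-st}; uniqueness was noted earlier, and by~(\ref{repr2}) this measure is $\dot\lambda_{\mathbf A,\alpha}$, which is~(\ref{st-weak-sol}); finiteness of $E_\alpha(\mu_{\mathbf A,\alpha})$ then follows from~(\ref{repr222}). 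For the converse (iv)$\Rightarrow$(iii), if Problem~\ref{pr-st} has a solution then $w_\alpha(\mathbf A)=E_\alpha(\lambda_{\mathbf A,\alpha})>0$, because $\lambda_{\mathbf A,\alpha}\neq 0$ (its positive part has total mass $1$) and $\kappa_\alpha$ satisfies the energy principle; thus $c_\alpha(\mathbf A)<\infty$ and $c_g(A_1)=c_\alpha(\mathbf A)<\infty$ by Lemma~\ref{eq-g-r}.

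For the convergence assertion, fix $\{\mu_k\}_{k\in\mathbb N}\subset\mathcal E_\alpha(\mathbf A,\mathbf 1;\mathbb R^n)$ with~(\ref{min-seq}). The crux is that $\{\mu_k^+\}$ is a $g$-minimizing sequence for $A_1$: indeed $\mu_k^+\in\mathcal E^+_\alpha(D;\mathbb R^n)$, so by~(\ref{proj}) $(\mu_k^+)'$ is the orthogonal projection of $\mu_k^+$ onto the convex cone $\mathcal E^+_\alpha(D^c;\mathbb R^n)$, which contains $\mu_k^-$; the standard projection inequality in a pre-Hilbert space then gives $E_\alpha(\mu_k)\geqslant\|\mu_k^+-(\mu_k^+)'\|_\alpha^2+\|(\mu_k^+)'-\mu_k^-\|_\alpha^2=\|\mu_k^+\|_g^2+\|(\mu_k^+)'-\mu_k^-\|_\alpha^2$ by~(\ref{gr}). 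Since $\|\mu_k^+\|_g^2\geqslant w_g(A_1)=w_\alpha(\mathbf A)$ and $E_\alpha(\mu_k)\to w_\alpha(\mathbf A)$, both $\|\mu_k^+\|_g^2\to w_g(A_1)$ and $\|(\mu_k^+)'-\mu_k^-\|_\alpha\to 0$. Hence $\{\mu_k^+\}_{k\in\mathbb N}\in\mathbb M_g(A_1,1;D)$, so $\{\mu_k^+-(\mu_k^+)'\}_{k\in\mathbb N}\in\dot{\mathbb M}_\alpha(\mathbf A,\mathbf 1;\mathbb R^n)$, and Theorem~\ref{th-ex}\,(a$'$) yields $\mu_k^+-(\mu_k^+)'\to\lambda_{\mathbf A,\alpha}$ in the weak energy norm, which on $\mathcal E_\alpha(\mathbb R^n)$ equals the standard norm by~(\ref{dot0}). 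Writing $\mu_k=(\mu_k^+-(\mu_k^+)')+((\mu_k^+)'-\mu_k^-)$ gives strong convergence $\mu_k\to\lambda_{\mathbf A,\alpha}$ in $\mathcal E_\alpha(\mathbb R^n)$. Vague convergence I would get from this: by Theorem~\ref{S-D}, $\mathcal E_\alpha(\mathbb R^n)\subset\dot{\mathcal E}_\alpha(\mathbb R^n)$ is isometrically imbedded in $S_\alpha^*$, so $\mu_k\to\lambda_{\mathbf A,\alpha}$ in $S_\alpha^*$, hence in the sense of distributions (as in the proof of Theorem~\ref{S-D}, via \cite{De1}); since $|\mu_k|(\mathbb R^n)=|\lambda_{\mathbf A,\alpha}|(\mathbb R^n)=2$ for all $k$, a routine approximation of $\varphi\in C_0(\mathbb R^n)$ by functions in $C^\infty_0(\mathbb R^n)$, uniform and with controlled supports, upgrades this to vague convergence in $\mathfrak M(\mathbb R^n)$.

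The step I expect to be the main obstacle is the one just described in the third paragraph: turning an \emph{arbitrary} minimizing sequence for Problem~\ref{pr-st} into the image under balayage of a $g$-minimizing sequence for $A_1$, i.e.\ using the convex-cone projection inequality to show simultaneously that $\{\mu_k^+\}\in\mathbb M_g(A_1,1;D)$ and that $(\mu_k^+)'-\mu_k^-\to 0$ strongly; everything else is bookkeeping with already-established results. A secondary nuisance is that strong convergence in $\mathcal E_\alpha(\mathbb R^n)$ does not by itself imply vague convergence for signed measures, so one must genuinely pass through the distributional completion $S_\alpha^*$ and use the uniform mass bound $|\mu_k|(\mathbb R^n)=2$ to recover the vague limit.
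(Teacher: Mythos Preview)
Your argument is correct and, in several places, takes a route different from the paper's.

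For the implication (iv)$\Rightarrow$(iii) you use the direct observation that a nonzero minimizer forces $w_\alpha(\mathbf A)>0$, hence $c_g(A_1)=c_\alpha(\mathbf A)<\infty$ via Lemma~\ref{eq-g-r}. The paper instead proves (iv)$\Rightarrow$(v) by contradiction: assuming $c_\alpha(A_1)=\infty$ (and noting $c_\alpha(A_2)=\infty$ by non-$\alpha$-thinness), it constructs $\tau_j=\tau_j^1-\tau_j^2\in\mathcal E_\alpha(\mathbf A,\mathbf 1;\mathbb R^n)$ with $\|\tau_j\|_\alpha\to0$, forcing $w_\alpha(\mathbf A)=0$. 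Your argument is shorter; the paper's has the minor advantage of targeting (v) directly.

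For the strong convergence of an arbitrary minimizing sequence $\{\mu_k\}$, the two proofs genuinely diverge. The paper pulls in the particular sequence $\theta_j=\tilde\nu_j-\tilde\nu_j'$ from Theorem~\ref{th-ex}(b$'$), which lies in $\mathcal E_\alpha(\mathbf A,\mathbf 1;\mathbb R^n)$ and is itself minimizing, and then uses the standard parallelogram argument to get $\|\mu_k-\theta_j\|_\alpha\to0$; strong convergence of $\theta_j$ (from (a$'$) and~(\ref{dot0})) transfers to $\mu_k$. You instead use the variational inequality for the projection~(\ref{proj}) onto the cone $\mathcal E_\alpha^+(D^c;\mathbb R^n)$ to obtain the Pythagorean-type estimate $E_\alpha(\mu_k)\geqslant\|\mu_k^+\|_g^2+\|(\mu_k^+)'-\mu_k^-\|_\alpha^2$, which simultaneously shows $\{\mu_k^+\}\in\mathbb M_g(A_1,1;D)$ and $(\mu_k^+)'-\mu_k^-\to0$ strongly; then (a$'$) applied to $\{\mu_k^+-(\mu_k^+)'\}$ and~(\ref{dot0}) finish. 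Your route extracts the extra structural information that the positive parts themselves form a $g$-minimizing sequence, at the cost of invoking the cone-projection variational inequality (which follows from~(\ref{proj}) by the usual convex-combination argument but is not stated explicitly in the paper).

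For vague convergence the paper simply quotes \cite[Theorem~9.1]{ZPot2}, which asserts that under~(\ref{dist}) the relevant metric subspace of $\mathcal E_\alpha(\mathbb R^n)$ is strongly complete with strong topology finer than vague. Your route through $S_\alpha^*$ (Theorem~\ref{S-D}), distributional convergence via \cite{De1}, and the uniform mass bound $|\mu_k|(\mathbb R^n)=2$ is more self-contained within the present paper and avoids that external reference; both are valid.
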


\begin{proof}Assume first that (v) holds. According to Lemma~\ref{eq-r-g}, (v) is equivalent to (iii), and hence also to either (i) or (ii) (see Theorem~\ref{th-ex} above). Furthermore, again by the quoted lemma, in view of (iii) the extension of the (bounded) $g$-equilibrium measure $\gamma=\gamma_{A_1,g}$  has finite $\alpha$-Riesz energy $E_\alpha(\gamma)$ and
\[E_g(\gamma)=E_\alpha(\gamma-\gamma')=E_\alpha(\gamma)-E_\alpha(\gamma').\]
By representation (\ref{eqq}) of the measure $\mu_{{\mathbf A},\alpha}$ from (ii), we thus have $\mu_{{\mathbf A},\alpha}\in\mathcal E_\alpha(\mathbb R^n)$. In view of (\ref{repr222})  this implies $\dot{\lambda}_{{\mathbf A},\alpha}\in\mathcal E_\alpha({\mathbf A},{\mathbf 1};\mathbb R^n)$ and hence, by
(\ref{dot0}) and Lemma~\ref{l:dot},
\[E_\alpha(\dot{\lambda}_{{\mathbf A},\alpha})=\dot{E}_\alpha(\dot{\lambda}_{{\mathbf A},\alpha})=\dot{w}_\alpha(\mathbf A)=w_\alpha(\mathbf A).\]
This shows that (v) implies (iv) with $\lambda_{{\mathbf A},\alpha}$ given by~(\ref{st-weak-sol}).

We next show that (iv) implies (v) (and hence any of (i)--(iii)). Assume, on the contrary, that $c_\alpha(A_1)=\infty$. Since $c_\alpha(A_2)=\infty$ by the $\alpha$-non-thinness of $A_2$ at infinity (see Remark~\ref{rem-thin}), there exist two sequences $\{\tau_j^i\}_{j\in\mathbb N}$, $i=1,2$, such that $\tau_j^i\in\mathcal E^+_\alpha(A_i,1;\mathbb R^n)$ and
\[\lim_{j\to\infty}\,\|\tau_j^i\|_\alpha=0\text{ \ for all \ }i=1,2.\]
Then $\tau_j:=\tau_j^1-\tau_j^2\in\mathcal E_\alpha({\mathbf A},{\mathbf 1};\mathbb R^n)$ for every $j\in\mathbb N$, and hence
\[\|\tau_j\|^2_\alpha\geqslant w_\alpha(\mathbf A)\geqslant 0.\]
Since $\langle\tau_j^1,\tau_j^2\rangle_\alpha\to0$ as $j\to\infty$ by the Cauchy--Schwarz (Bunyakovski) inequality in $\mathcal E_\alpha(\mathbb R^n)$, we see from the last two displays that $w_\alpha(\mathbf A)=0$. This contradicts (iv) in view of the strict positive definiteness of the kernel $\kappa_\alpha$ because $0\notin\mathcal E_\alpha({\mathbf A},{\mathbf 1};\mathbb R^n)$.

Finally, fix any $\{\mu_k\}_{k\in\mathbb N}\subset\mathcal E_\alpha({\mathbf A},{\mathbf 1};\mathbb R^n)$ with the property (\ref{min-seq}) and also define $\theta_j:=\tilde{\nu}_j-\tilde{\nu}'_j$, where $\{\tilde{\nu}_j-\tilde{\nu}'_j\}_{j\in\mathbb N}\in\dot{\mathbb M}_\alpha({\mathbf A},{\mathbf 1};\mathbb R^n)$ is a (particular) sequence from Theorem~\ref{th-ex}(b$'$). Then $\{\theta_j\}_{j\in\mathbb N}\subset\mathcal E_\alpha({\mathbf A},{\mathbf 1};\mathbb R^n)$ by (\ref{partic}), and it likewise possesses the property (\ref{min-seq}), the latter being clear from (\ref{dot0}) and (\ref{lim:dot}).
Using standard arguments based on the convexity of $\mathcal E_\alpha({\mathbf A},{\mathbf 1};\mathbb R^n)$ and the pre-Hilbert structure on $\mathcal E_\alpha(\mathbb R^n)$, we then obtain
\begin{equation}\label{fundalpha}\lim_{j,k\to\infty}\,\|\mu_k-\theta_j\|_\alpha=0.\end{equation}
Substituting (\ref{st-weak-sol}) into Theorem~\ref{th-ex}(a$'$) with $\{\theta_j\}_{j\in\mathbb N}$ in place of $\{\nu_j-\nu'_j\}_{j\in\mathbb N}$ and next applying (\ref{dot0}) shows that $\theta_j\to\lambda_{{\mathbf A},\alpha}$ in the strong topology of $\mathcal E_\alpha(\mathbb R^n)$. By (\ref{fundalpha}), the same holds for $\{\mu_k\}_{k\in\mathbb N}$. However,  under assumption (\ref{dist}) the class of all $\nu\in\mathfrak M({\mathbf A};\mathbb R^n)\cap\mathcal E_\alpha(\mathbb R^n)$ with $\nu^\pm(\mathbb R^n)\leqslant1$, treated as a metric subspace of $\mathcal E_\alpha(\mathbb R^n)$, is strongly complete and the strong topology on this space is finer than the induced vague topology \cite[Theorem~9.1]{ZPot2}. Thus $\mu_k\to\lambda_{{\mathbf A},\alpha}$ also vaguely.
\end{proof}

\subsection{Remark}\label{rem-st} The requirement that $A_2$ be not  $\alpha$-thin at infinity is essential for the validity of Theorem~\ref{th-st}. Actually, the following theorem holds (see \cite[Theorem~5]{ZR} and Remark~\ref{r-3} above).

\begin{theorem}\label{th-clas}
Let\/ $A_1\subset D$ be a closed set in\/ $\mathbb R^n$ with\/ $c_\alpha(A_1)<\infty$ and\/ ${\rm dist}\,(A_1,A_2)>0$, where $A_2=D^c$. Then Problem\/~{\rm\ref{pr-st}} is solvable if and only if either\/ $c_\alpha(A_2)<\infty$, or\/ $A_2$ is not\/ $\alpha$-thin at infinity.\end{theorem}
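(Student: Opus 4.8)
The plan is to split the equivalence into the two nontrivial implications, relying heavily on the machinery already developed for generalized condensers. The ``if'' direction --- if either $c_\alpha(A_2)<\infty$ or $A_2$ is not $\alpha$-thin at infinity, then Problem~\ref{pr-st} is solvable --- divides into two cases. When $A_2$ is not $\alpha$-thin at infinity, this is exactly the content of Theorem~\ref{th-st}: under the standing hypotheses $c_\alpha(A_1)<\infty$ (which by Lemma~\ref{eq-r-g} is equivalent to $c_g(A_1)<\infty$, i.e.\ assertion (iii)) and $\mathrm{dist}(A_1,A_2)>0$, assertion (iv) of Theorem~\ref{th-st} gives the (unique) solution $\lambda_{{\mathbf A},\alpha}$. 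When instead $c_\alpha(A_2)<\infty$ (but $A_2$ may be $\alpha$-thin at infinity), one cannot invoke Theorem~\ref{th-st} directly; here I would fall back on the approach of \cite{ZPot1,ZPot2}, which under the distance condition~(\ref{dist}) shows that~(\ref{r-suff}), i.e.\ $c_\alpha(A_i)<\infty$ for $i=1,2$, suffices for Problem~\ref{pr-st} to be solvable (as recalled in Remark~\ref{r-3}, citing \cite[Theorem~8.1]{ZPot2}). So in this case solvability follows from that quoted result together with the hypotheses $c_\alpha(A_1)<\infty$ and $c_\alpha(A_2)<\infty$.

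For the ``only if'' direction, assume Problem~\ref{pr-st} is solvable; we must show that $A_2$ is either of finite capacity or not $\alpha$-thin at infinity. Arguing by contraposition, suppose $c_\alpha(A_2)=\infty$ \emph{and} $A_2$ is $\alpha$-thin at infinity, and derive a contradiction. The idea is the mass-loss phenomenon under balayage onto a set that is $\alpha$-thin at infinity: by Theorem~\ref{bal-mass-th}, $A_2$ being $\alpha$-thin at infinity means there is a bounded $\mu\in\mathfrak M^+(D)$ with $\mu'(\mathbb R^n)<\mu(\mathbb R^n)$, and more to the point the $g$-equilibrium/capacitary constructions lose mass upon sweeping onto $A_2$. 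Concretely, since $c_\alpha(A_1)<\infty$ we have $c_g(A_1)<\infty$ by Lemma~\ref{eq-r-g}, so the $g$-capacitary measure $\lambda=\lambda_{A_1,g}$ exists and $\dot\lambda_{{\mathbf A},\alpha}=\lambda-\lambda'$ solves the weak problem (Theorem~\ref{th-ex}). One then wants to compare this weak solution with a putative standard solution $\lambda_{{\mathbf A},\alpha}$: the latter would have to satisfy $\lambda_{{\mathbf A},\alpha}^-(A_2)=1$, whereas $\lambda'(A_2)=\lambda'(\mathbb R^n)<\lambda(A_1)=1$ because $A_2$ is $\alpha$-thin at infinity, so $\dot\lambda_{{\mathbf A},\alpha}\notin\mathfrak M({\mathbf A},{\mathbf 1};\mathbb R^n)$. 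The contradiction should come from showing that any minimizing sequence for the standard problem must converge (weakly, hence vaguely) to $\dot\lambda_{{\mathbf A},\alpha}$, whose negative part has total mass strictly less than~$1$, so no admissible measure can attain $w_\alpha({\mathbf A})$; more precisely, $w_\alpha({\mathbf A})=\dot w_\alpha({\mathbf A})$ is not attained within $\mathcal E_\alpha({\mathbf A},{\mathbf 1};\mathbb R^n)$. This is the strategy already used in \cite[Theorem~4.6]{DFHSZ1} for the Newtonian case and cited in Remark~\ref{r-3}; for general $\alpha\in(0,2]$ the reference is \cite[Theorem~5]{ZR}, whose proof uses precisely these balayage mass-loss estimates.

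The main obstacle is the ``only if'' direction, and within it the verification that a standard minimizer genuinely cannot exist when $A_2$ is $\alpha$-thin at infinity and $c_\alpha(A_2)=\infty$. The delicate point is that vague convergence of a minimizing sequence $\{\mu_k\}$ is not automatic (the sets are noncompact), so one cannot immediately pass to a vague limit and read off its total mass; instead one must use strong convergence in the weak energy norm $\|\cdot\|^\cdot_\alpha$ --- available via Theorem~\ref{th-ex}(a$'$) and (b$'$) --- to identify the would-be limit as $\dot\lambda_{{\mathbf A},\alpha}$, then argue that the negative-plate mass cannot be recovered in the limit because the balayage $\lambda'$ strictly loses mass (Theorem~\ref{bal-mass-th}, footnote~\ref{foot-bound}) while every admissible measure has negative part of mass exactly~$1$. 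The condition $c_\alpha(A_2)=\infty$ is needed to rule out the alternative mechanism (handled in the ``if'' direction via \cite{ZPot2}) by which a solution could exist despite mass loss. Since this chain of reasoning is carried out in detail in \cite{ZR} under hypotheses matching ours (closed $A_1\subset D$, $c_\alpha(A_1)<\infty$, $\mathrm{dist}(A_1,A_2)>0$), the cleanest presentation is to reduce Theorem~\ref{th-clas} to \cite[Theorem~5]{ZR} after noting the equivalences supplied by Lemma~\ref{eq-r-g}, Lemma~\ref{l:dot}, Lemma~\ref{eq-g-r}, and Theorems~\ref{th-ex}--\ref{th-st}, rather than reproving it from scratch.
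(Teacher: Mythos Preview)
The paper does not supply an independent proof of this theorem: it is stated with the parenthetical ``see \cite[Theorem~5]{ZR} and Remark~\ref{r-3} above'' and nothing more. Your final recommendation --- to reduce Theorem~\ref{th-clas} to \cite[Theorem~5]{ZR} --- is therefore exactly what the paper does, and for the ``if'' direction your case split (Theorem~\ref{th-st} when $A_2$ is not $\alpha$-thin at infinity; \cite[Theorem~8.1]{ZPot2} via Remark~\ref{r-3} when $c_\alpha(A_2)<\infty$) is a reasonable elaboration of the citations.

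However, your sketch of the ``only if'' direction has a real gap that you should be aware of before presenting it as an argument. You invoke Theorem~\ref{th-ex} (to produce $\dot\lambda_{{\mathbf A},\alpha}=\lambda-\lambda'$ and to get the convergence statements (a$'$), (b$'$)) and Lemma~\ref{eq-g-r} (implicitly, via $w_\alpha(\mathbf A)=\dot w_\alpha(\mathbf A)=w_g(A_1)$) inside the contradiction hypothesis that $A_2$ \emph{is} $\alpha$-thin at infinity. But both Theorem~\ref{th-ex} and Lemma~\ref{eq-g-r} are stated and proved under the standing assumption that $A_2$ is \emph{not} $\alpha$-thin at infinity; that hypothesis is used precisely to guarantee $\lambda'(A_2)=\lambda(A_1)$ via Theorem~\ref{bal-mass-th}, which is what puts $\lambda-\lambda'$ into the admissible class $\dot{\mathcal E}^\circ_\alpha({\mathbf A},{\mathbf 1};\mathbb R^n)$. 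So in the thin case you cannot simply quote those results; the argument in \cite{ZR} proceeds differently (working directly with minimizing sequences and the strong completeness result recalled in Remark~\ref{remark}, rather than through the Green-kernel/weak-energy apparatus of the present paper). Your heuristic about mass loss is the right intuition, but the machinery you point to is unavailable in that regime. The honest statement is that the necessity part genuinely requires the argument of \cite{ZR} and is not a corollary of the results proved here.
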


Thus, if $A_2$ is $\alpha$-thin at infinity, but $c_\alpha(A_2)=\infty$ (such a set exists according to Remark~\ref{rem-thin}), then Theorem~\ref{th-st}\,(iv) fails to hold in spite of the requirement $c_\alpha(A_1)<\infty$.

\begin{example}Let $n=3$, $\alpha=2$ and $A_2=D^c=F_\varrho$, where $F_\varrho$ is described by (\ref{descr}), and let $A_1$ be a closed set in $\mathbb R^3$ with $c_2(A_1)<\infty$ and ${\rm dist}\,(A_1,A_2)>0$. According to Theorem~\ref{th-clas}, then a solution to Problem~\ref{pr-st} with these data does exist if $\varrho$ is given by either (\ref{c1}) or (\ref{c3}), while the problem has no solution if $\varrho$ is defined by (\ref{c2}). These (theoretical) results on the solvability or unsolvability of Problem~\ref{pr-st} have been illustrated in \cite{HWZ,OWZ} by means of numerical experiments.\end{example}

\section{An open question}\label{open}

As is well known, the concept of standard $\alpha$-Riesz energy coincides with both the weak and the Deny--Schwartz energy when considered on $\mathcal E_\alpha(\mathbb R^n)$.
The concept of weak energy has been shown to coincide with that of Deny--Schwartz energy on $\dot{\mathcal E}_\alpha(\mathbb R^n)$ (see Theorem~\ref{S-D}), while it follows from Theorem~\ref{thm} and the example in \cite[Appendix]{DFHSZ2} that one can choose an element of $\dot{\mathcal E}_\alpha(\mathbb R^n)$ which does not belong to $\mathcal E_\alpha(\mathbb R^n)$. It is however still unknown whether there is a measure $\mu$ with finite Deny--Schwartz energy but infinite weak energy, and this problem makes sense for any $\alpha\in(0,n)$. A similar question for positive measures was raised by Deny \cite[p.~85]{De2}. It is clear however from Theorem~\ref{S-D-comp} that such $\mu$ (provided it exists) must be of noncompact support.

\end{document}